\theoremstyle{plain} 
\newtheorem{theorem}{\indent\sc Theorem}[section] 
\newtheorem{lemma}[theorem]{\indent\sc Lemma}
\newtheorem{corollary}[theorem]{\indent\sc Corollary}
\newtheorem{proposition}[theorem]{\indent\sc Proposition}
\newtheorem{claim}[theorem]{\indent\sc Claim}
\theoremstyle{definition} 
\newtheorem{definition}[theorem]{\indent\sc Definition}
\newtheorem{remark}[theorem]{\indent\sc Remark}
\begin{document}

\title[Ricci curvature]{A weakly second order differential structure on rectifiable metric measure spaces} 

\author[Shouhei Honda]{Shouhei Honda} 

\subjclass[2000]{Primary 53C20.}

\keywords{Gromov-Hausdorff convergence, Ricci curvature, Geometric measure theory.}

\address{ 
Faculty of Mathmatics \endgraf
Kyushu University \endgraf 
744, Motooka, Nishi-Ku, \endgraf 
Fukuoka 819-0395 \endgraf 
Japan
}
\email{honda@math.kyushu-u.ac.jp}

\maketitle

\begin{abstract}
We give the definition of angles on a Gromov-Hausdorff limit space of a sequence of complete $n$-dimensional Riemannian manifolds with a lower Ricci curvature bound. We apply this to prove there is a weakly second order differential structure on these spaces and prove there is a unique
Levi-Civita connection allowing us to define the Hessian of a twice differentiable function.
\end{abstract}

\section{Introduction}
Let $X$ be a metric space. We say that a map $\gamma$ from $[0, l]$ to $X$ is a \textit{minimal geodesic} if $\gamma$ is an isometric embedding.
Let $\gamma_1$ and $\gamma_2$ be minimal geodesics on $X$ beginning at a point $x \in X$.
Define the angle $\angle \dot{\gamma_1} \dot{\gamma_2} \in [0, \pi]$ between $\gamma_1$ and $\gamma_2$ at $x$ by
\begin{equation}
\cos \angle \dot{\gamma_1} \dot{\gamma_2} = \lim_{t \to 0}\frac{2t^2-\overline{\gamma_1(t), \gamma_2(t)}^2}{2t^2}
\end{equation}  
if the limit exists, where $\overline{x,y}$ is the distance between $x$ and $y$.

This notion of an angle is crucial in the study metric spaces.
For example, on Alexandrov spaces (or CAT($\kappa$)-spaces), the angle between every two minimal geodesics beginning at a common point always exists.
The existence directly follows from some monotonicity property induced by Toponogov's comparison inequality. 
Roughly speaking, the monotonicity property is closely related to a lower (or upper) bound of sectional curvature of the space.
See for instance a fundamental work about Alexandrov spaces \cite{bgp} by Burago-Gromov-Perelman.
Note that in general, angles are \textit{not} well-defined.

Now we consider the following question:

\textbf{Question}: \textit{is the angle between two given minimal geodesics beginning at a common point on a metric (measure) space with a lower Ricci curvature bound well-defined?}

Since the angle on Alexandrov space is well-defined, the answer to the question is affirmative under a lower bound of sectional curvature.
There are many important works for a lower Ricci curvature bounds on metric measure spaces.
See for instance \cite{oh, st1, st2, lo-vi, vi} by Lott-Villani, Ohta and Sturm.
Note that a typical example of them is a Gromov-Hausdorff limit space of a sequence of Riemannian manifolds with a lower Ricci curvature bound.

In this paper we prove that on limits of manifolds with lower Ricci curvature bounds, the answer is \textit{ALMOST POSITIVE}, in a way which we will soon make more precise in Theorem \ref{angle}.
First we observe that the following Colding-Naber's recent very interesting result implies that in general, an answer to the question above is \textit{NEGATIVE}:
\begin{theorem}\cite[Theorems $1.2$ and $1.3$]{co-na}\label{cn}
For every $n \ge 3$, there exists a pointed proper metric space $(Y, p)$ with the following properties:
\begin{enumerate}
\item $(Y, p)$ is a noncollapsing Gromov-Hausdorff limit space of a sequence of pointed $n$-dimensional complete Riemannian manifolds with a lower Ricci curvature bound.
\item All points of $Y$ are regular points. Moreover,  $Y$ is a uniform Reifenberg space.
\item For every two minimal geodesics $\gamma_1$, $\gamma_2$ beginning at $p$ and every $\theta \in [0, \pi]$, there exists a sequence $t_i \to 0$ such that 
\begin{equation}
\cos \theta = \lim_{i \to \infty}\frac{2t_i^2-\overline{\gamma_1(t_i), \gamma_2(t_i)}^2}{2t_i^2}.
\end{equation}
\end{enumerate}
\end{theorem}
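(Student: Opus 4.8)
Since this statement is quoted from \cite{co-na}, I only outline the construction I would carry out; the complete argument is given there. The guiding idea is to build $(Y,p)$ so that it is infinitesimally Euclidean at every scale, while the identification of successive rescalings with $\mathbb{R}^n$ is sheared by a twist that grows without bound, but slowly, as the scale tends to $0$.

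First I would represent $Y$ as $\mathbb{R}^n$ with $p$ the origin, written in polar-type coordinates $(r,\omega)$ with $r\ge 0$ and $\omega\in S^{n-1}$, and prescribe a warped metric in which the angular part is twisted by a rotation field $\Phi(r,\omega)$ depending on the angular position $\omega$ as well as on $r$. The dependence on $\omega$ (a differential, shearing twist rather than a rigid one) is essential: a rigid rotation of each sphere would preserve all angles between geodesics, whereas a shearing twist makes geodesics issuing from $p$ in different directions spiral relative to one another. I would arrange the total accumulated twist to behave like $\phi(r)\to\infty$ as $r\to 0$, subject to the slow-variation bound $r\,\partial_r\phi(r)\to 0$.

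Next I would realize $(Y,p)$ as a noncollapsing pointed Gromov-Hausdorff limit of smooth complete $n$-manifolds $(M_i,g_i,p_i)$ with $\mathrm{Ric}_{g_i}\ge -(n-1)$, obtained by smoothing the twisted metric below scale $r_i\to 0$. The slow-variation bound is exactly what keeps the centrifugal contributions of the twist to the Ricci tensor bounded below, and the near-Euclidean geometry supplies the uniform volume lower bound needed for noncollapsing. For property (2), the same bound shows that at every point and every small scale the space is Gromov-Hausdorff close to a Euclidean ball of dimension $n$, uniformly; hence every point is regular with tangent cone $\mathbb{R}^n$ and $Y$ is a uniform Reifenberg space.

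For property (3), given two minimal geodesics $\gamma_1,\gamma_2$ from $p$, I would examine the rescaled pair $(\gamma_1|_{[0,t]},\gamma_2|_{[0,t]})$ under dilation by $t^{-1}$. Because the space is almost Euclidean at scale $t$, the comparison quantity $(2t^2-\overline{\gamma_1(t),\gamma_2(t)}^2)/(2t^2)$ is, up to an error tending to $0$, the cosine of the mutual angle $\psi(t)$ of the two rescaled rays; the shearing twist makes $\psi(t)$ a continuous function of $t$ whose oscillation as $t\to 0$ fills $[0,\pi]$. The intermediate value theorem then yields, for each prescribed $\theta\in[0,\pi]$, a sequence $t_i\to 0$ with $\psi(t_i)=\theta$, which is (3). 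The main obstacle is the third paragraph: one must produce genuine smooth manifolds with a single uniform Ricci lower bound converging to the twisted space. This is a quantitative balancing act, since the twist must accumulate enough ($\phi(r)\to\infty$) to force the comparison angle through all of $[0,\pi]$, yet vary slowly enough ($r\,\partial_r\phi\to 0$) both to keep every point regular and to keep the Ricci curvature of the smoothings bounded below; securing both simultaneously, and verifying the curvature bound for the explicit metric, is the heart of the argument.
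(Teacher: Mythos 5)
The paper does not prove this statement: it is imported verbatim from Colding--Naber \cite[Theorems 1.2 and 1.3]{co-na} and used only as motivation, so there is no internal proof to measure your proposal against. Judged against the cited source, your outline is faithful in spirit: the example there is indeed a twisted metric on a punctured ball, written as $dr^2+r^2g_r$ with $g_r$ obtained by pulling back a fixed \emph{non-round} perturbation of the standard sphere metric by rotations whose total angle diverges (roughly like an iterated logarithm) while $r\,\partial_r$ of the twist tends to $0$; your remark that a rigid twist of a round cross-section is an isometry and hence does nothing, so that a shearing/non-symmetric twist is essential, is exactly the right observation, and the intermediate-value argument for property (3) is how the oscillation of the comparison angle is converted into the stated conclusion. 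You also correctly identify the two competing constraints (enough accumulated twist to force the angle through all of $[0,\pi]$, slow enough variation to preserve the Ricci lower bound, regularity, and noncollapsing). What you present is, however, only a sketch: the explicit metric, the verification of $\mathrm{Ric}\ge -(n-1)$ for the smoothings, and the proof that \emph{every} pair of distinct minimal geodesics from $p$ acquires unbounded relative twist are all deferred to \cite{co-na}. Since the paper under review defers exactly the same things, this is an appropriate level of detail here, but none of it would stand as a self-contained proof.
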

Note that $(Y, p)$ as above has some nice properties.
See \cite[Theorem $1.2$]{co-na} by Colding-Naber for the details.
This important example implies that even on a metric space with a lower Ricci curvature bound and nice properties, in general, angles are not well-defined.

In order to give the first main theorem of this paper,
let $(M_{\infty}, m_{\infty})$ be a Gromov-Hausdorff limit space of a sequence of pointed complete $n$-dimensional Riemannian manifolds  $\{(M_i, m_i)\}_{i<\infty}$ with $\mathrm{Ric}_{M_i} \ge -(n-1)$.
See \cite{ch-co, ch-co1, ch-co2, ch-co3} for the wonderful structure theory of $M_{\infty}$ developed by Cheeger-Colding.
The following is the first main result of this paper:
\begin{theorem}\label{angle}
Let $p, q \in M_{\infty} \setminus \{m_{\infty}\}$ with $m_{\infty} \not \in C_p \cup C_q$.
Then the angle $\angle pm_{\infty}q \in [0, \pi]$ of $pm_{\infty}q$ is well-defined.
In fact, we have 
\begin{equation}
\cos \angle pm_{\infty}q=\lim_{t \to 0}\frac{2t^2-\overline{\gamma_p(t), \gamma_q(t)}^2}{2t^2}
\end{equation}
for any minimal geodesics $\gamma_p$ from $m_{\infty}$ to $p$ and $\gamma_q$ from $m_{\infty}$ to $q$, where $C_p$ is the cut locus of $p$ defined by $C_p =\{x \in M_{\infty}; \overline{p, x} + \overline{x, z}> \overline{p, z}$ for every $z \in M_{\infty} \setminus \{x\}\}.$
\end{theorem}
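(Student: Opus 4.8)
The plan is to identify the limit with the inner product $\langle \nabla r_p, \nabla r_q\rangle(m_\infty)$ of the gradients of the distance functions $r_p = \overline{p, \cdot}\,$ and $r_q = \overline{q, \cdot}\,$: the initial velocities of $\gamma_p$ and $\gamma_q$ are $-\nabla r_p(m_\infty)$ and $-\nabla r_q(m_\infty)$, so the cosine of the angle should be $\langle \nabla r_p, \nabla r_q\rangle(m_\infty)$, and I then want the infinitesimal law of cosines $\overline{\gamma_p(t), \gamma_q(t)}^2 = 2t^2\bigl(1 - \langle \nabla r_p, \nabla r_q\rangle(m_\infty)\bigr) + o(t^2)$, which upon division by $2t^2$ yields the assertion.

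First I would exploit the hypothesis $m_\infty \notin C_p \cup C_q$. By the very definition of the cut locus, for each of $p$ and $q$ the minimal geodesic to $m_\infty$ extends minimally beyond $m_\infty$, so $r_p$ and $r_q$ are affine with slope $\pm 1$ along short segments having $m_\infty$ in their interior. This removes the ``corner'' a distance function has at a cut point and is precisely the feature the Colding--Naber space of Theorem \ref{cn} must lack; it is what makes a first-order analysis at $m_\infty$ possible.

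Next I would blow up at $m_\infty$: rescaling the metric by $1/t$ and letting $t \to 0$, any limit is a tangent cone $(T_{m_\infty}M_\infty, o)$, and the two extended geodesics, having fixed length, become bi-infinite lines through $o$ in the rescaled limit. Cheeger--Colding's splitting theorem then produces isometric $\mathbb{R}$-factors along these lines, so the cone contains a flat $\mathbb{R}^2$ (or $\mathbb{R}$ if the directions coincide) in which $\gamma_p, \gamma_q$ are genuine Euclidean rays; there the law of cosines holds exactly, giving $\overline{\gamma_p(t),\gamma_q(t)}^2 / t^2 \to 2(1 - \cos\beta)$ along the chosen blow-up sequence, where $\beta$ is the Euclidean angle between the two rays. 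To pass from this subsequential statement to a genuine limit I would establish the first-variation formula $\frac{d}{dt}\big|_{t=0}\overline{\gamma_p(t), q} = -\langle \nabla r_p, \nabla r_q\rangle(m_\infty)$, using the Cheeger--Colding gradient estimates and the segment inequality on the approximating manifolds $M_i$ to show that the directional derivative of $r_q$ along $\gamma_p$ exists as an honest two-sided derivative (two-sided because $\gamma_p$ extends) and equals the gradient inner product.

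The main obstacle is the independence of the answer from the blow-up sequence: tangent cones at $m_\infty$ are in general neither unique nor Euclidean, so a priori the angle $\beta$ could oscillate, which is exactly the pathology of Theorem \ref{cn}. The heart of the proof is therefore to show that the extendability of \emph{both} geodesics pins down their two directions consistently in every tangent cone, so that $\beta = \angle p m_\infty q$ is the same for all sequences $t_i \to 0$. I expect to obtain this rigidity from the first-variation formula above: once $t \mapsto \overline{\gamma_p(t), q}$ is genuinely differentiable at $0$, the value $\langle \nabla r_p, \nabla r_q\rangle(m_\infty)$ is forced, and a symmetric argument moving the second endpoint, combined with the parallelogram identity valid in the splitting, controls the full quantity $\overline{\gamma_p(t),\gamma_q(t)}^2$ and removes the oscillation.
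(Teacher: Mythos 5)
Your overall geometric picture --- blow up at $m_{\infty}$, note that the extendability hypothesis turns $\gamma_p$ and $\gamma_q$ into lines in every tangent cone, apply the splitting theorem to get a flat factor in which the law of cosines is exact --- matches the second half of the paper's argument (Theorem \ref{well3}). But you have correctly located the crux (independence of the blow-up sequence) and then not supplied the ingredient that resolves it. Your proposed resolution rests on the pointwise object $\langle \nabla r_p, \nabla r_q\rangle(m_{\infty})$ and on a genuine two-sided derivative of $t \mapsto \overline{\gamma_p(t), q}$ at $t=0$. On a Ricci limit space the differentials $dr_p, dr_q$ are only defined $\upsilon$-almost everywhere via the rectifiable structure, and $m_{\infty}$ is a single point; there is no a priori pointwise value there, and the ``almost first variation formula'' the paper records after Theorem \ref{well3} is explicitly only an a.e.\ statement. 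Asserting that the derivative at $t=0$ exists and equals a well-defined inner product is essentially a restatement of the theorem, not a proof of it, and the tools you invoke (gradient estimates, segment inequality) do not by themselves produce it.

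What actually closes the gap in the paper is a quantitative integral oscillation estimate (Proposition \ref{hol} and Corollary \ref{hol1}): using the harmonic replacements $\mathbf{b}_i$ of the distance functions from the proof of the splitting theorem, with their $L^2$ Hessian bounds, together with the Poincar\'e inequality, one shows that the mean oscillation of $\langle dr_p, dr_q\rangle$ over $B_r(m_{\infty})$ decays like $r^{\alpha(n)}$ as long as $m_{\infty} \in \mathcal{D}_p^{\tau}\cap\mathcal{D}_q^{\tau}$. This forces the limit of the ball averages $\frac{1}{\upsilon(B_r(m_{\infty}))}\int_{B_r(m_{\infty})}\langle dr_p, dr_q\rangle\, d\upsilon$ to exist as $r \to 0$, yielding a single number $\cos\angle^{\upsilon}pm_{\infty}q$ defined without reference to any blow-up sequence. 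The tangent-cone/splitting analysis (via convergence of $db_p^i$ to the Busemann differential of the limit line, Claim \ref{bus}) then identifies \emph{every} subsequential limit of $(2t^2-\overline{\gamma_p(t),\gamma_q(t)}^2)/(2t^2)$ with this one number, which is what kills the oscillation exhibited by the Colding--Naber example. Without some such uniform H\"older-type control near $m_{\infty}$, your argument leaves open exactly the pathology of Theorem \ref{cn}, so this estimate is the missing idea rather than a technical refinement.
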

Theorem \ref{angle} implies that 
\begin{corollary}\label{angle1}
The angle between any pair of minimal geodesics, $\gamma_i: [0, l_i] \to M_{\infty}$ beginning at $x \in M_{\infty}$ is well-defined as long as they can be extended minimally through $x$, $\exists \epsilon >0$ such that $\gamma_i:[-\epsilon, \epsilon] \to M_{\infty}$ is minimal.
\end{corollary}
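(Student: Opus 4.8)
The plan is to deduce this directly from Theorem \ref{angle} by relocating the base point to $x$ and choosing the endpoints $p,q$ to be forward points along the two geodesics. The first observation I would record is that the hypotheses of Theorem \ref{angle} are insensitive to the choice of base point: if $(M_i,m_i)\to (M_\infty,m_\infty)$ in the pointed Gromov-Hausdorff sense with $\mathrm{Ric}_{M_i}\ge -(n-1)$, then for any $x\in M_\infty$ one may pick $x_i\in M_i$ with $x_i\to x$ and conclude $(M_i,x_i)\to (M_\infty,x)$, again with $\mathrm{Ric}_{M_i}\ge -(n-1)$. Hence $(M_\infty,x)$ is itself a limit space of the type considered, and Theorem \ref{angle} applies verbatim with $m_\infty$ replaced by $x$.

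Next I would produce the endpoints. Let $\gamma_1,\gamma_2$ be the given minimal geodesics, defined on $[0,l_1]$ and $[0,l_2]$ respectively, with $\gamma_1(0)=\gamma_2(0)=x$, and let $\epsilon>0$ be such that each extends to a minimal geodesic on $[-\epsilon,\epsilon]$. Set $p:=\gamma_1(\epsilon)$ and $q:=\gamma_2(\epsilon)$; both differ from $x$ since each $\gamma_i$ is an isometric embedding. The key point is that extendability through $x$ is exactly the statement $x\notin C_p\cup C_q$. Indeed, taking $z:=\gamma_1(-\epsilon)\neq x$, the restriction $\gamma_1|_{[-\epsilon,\epsilon]}$ is an isometric embedding, so
\begin{equation}
\overline{p,x}+\overline{x,z}=\epsilon+\epsilon=2\epsilon=\overline{p,z},
\end{equation}
which violates the strict inequality defining $C_p$; hence $x\notin C_p$, and symmetrically $x\notin C_q$.

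With these in hand, Theorem \ref{angle} (applied at the base point $x$) asserts that $\angle pxq$ is well-defined and that
\begin{equation}
\cos\angle pxq=\lim_{t\to 0}\frac{2t^2-\overline{\sigma_p(t),\sigma_q(t)}^2}{2t^2}
\end{equation}
for \emph{any} minimal geodesics $\sigma_p$ from $x$ to $p$ and $\sigma_q$ from $x$ to $q$. Taking $\sigma_p=\gamma_1|_{[0,\epsilon]}$ and $\sigma_q=\gamma_2|_{[0,\epsilon]}$, which are minimal since restrictions of isometric embeddings are isometric embeddings, the right-hand side is precisely the limit defining $\cos\angle\dot{\gamma_1}\dot{\gamma_2}$ in the introduction. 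Therefore this limit exists and the angle between $\gamma_1$ and $\gamma_2$ at $x$ is well-defined, as claimed. The only genuinely non-formal ingredient is the base-point independence in the first step; the remainder is a matter of unwinding the definition of the cut locus and matching the two expressions for the angle.
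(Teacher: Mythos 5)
Your proof is correct and follows exactly the deduction the paper intends (the paper states the corollary as an immediate consequence of Theorem \ref{angle} without writing out the details): re-point the limit at $x$ (or invoke Theorem \ref{well3}, which is already stated for a general base point), take $p=\gamma_1(\epsilon)$, $q=\gamma_2(\epsilon)$, and observe that the backward extensions $\gamma_i(-\epsilon)$ witness $x\notin C_p\cup C_q$ via equality in the triangle inequality. Nothing further is needed.
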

In \cite[Theorem $3.2$]{ho}, the cut locus is shown to have measure zero with respect to any limit measure, $\upsilon$, on $M_{\infty}$,
\begin{corollary}\label{angle2}
For every $p, q \in M_{\infty}$, the angle $\angle pxq$ is well-defined for $\upsilon$-almost every $x \in M_{\infty}$.
\end{corollary}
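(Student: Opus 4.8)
The plan is to deduce Corollary \ref{angle2} directly from Theorem \ref{angle} by transporting the vertex of the angle from the fixed basepoint $m_{\infty}$ to an arbitrary point $x \in M_{\infty}$, and then discarding a $\upsilon$-null set of bad vertices using the cut-locus estimate of \cite[Theorem $3.2$]{ho}.

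First I would record the re-basing observation that lets Theorem \ref{angle} be applied at any point. Since $(M_{\infty}, m_{\infty})$ is the pointed Gromov-Hausdorff limit of $\{(M_i, m_i)\}_{i<\infty}$ with $\mathrm{Ric}_{M_i} \ge -(n-1)$, for any $x \in M_{\infty}$ one may choose $x_i \in M_i$ with $x_i \to x$; then $(M_{\infty}, x)$ is the pointed Gromov-Hausdorff limit of $\{(M_i, x_i)\}_{i<\infty}$, and this sequence again satisfies $\mathrm{Ric}_{M_i} \ge -(n-1)$. Hence $(M_{\infty}, x)$ is a limit space of exactly the type treated in Theorem \ref{angle}, so Theorem \ref{angle} applies verbatim with $x$ in place of $m_{\infty}$. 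I would also note that the cut loci $C_p$ and $C_q$ are defined intrinsically in $M_{\infty}$ and are therefore independent of the choice of basepoint.

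Next, fixing $p, q \in M_{\infty}$ and applying Theorem \ref{angle} to the pointed space $(M_{\infty}, x)$ with endpoints $p, q$, the angle $\angle p x q$ is well-defined whenever $x \in M_{\infty} \setminus \{p, q\}$ and $x \notin C_p \cup C_q$. Thus the set of vertices for which $\angle p x q$ might fail to be well-defined is contained in
\begin{equation}
E := C_p \cup C_q \cup \{p, q\}.
\end{equation}
Finally I would show $\upsilon(E) = 0$: by \cite[Theorem $3.2$]{ho} we have $\upsilon(C_p) = \upsilon(C_q) = 0$, hence $\upsilon(C_p \cup C_q) = 0$, and since $\upsilon$ is non-atomic (a standard property of renormalized limit measures) the finite set $\{p, q\}$ satisfies $\upsilon(\{p, q\}) = 0$. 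Therefore $\upsilon(E) = 0$, and $\angle p x q$ is well-defined for every $x \in M_{\infty} \setminus E$, that is, for $\upsilon$-almost every $x \in M_{\infty}$.

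The genuinely substantive inputs are Theorem \ref{angle} itself and the measure-zero property of the cut locus from \cite{ho}; given these, the corollary is essentially a bookkeeping argument. The one point I would take care to verify is the re-basing step, namely that shifting the basepoint to an arbitrary $x$ preserves all hypotheses of Theorem \ref{angle}—in particular the lower Ricci bound and the limit structure—so that the conclusion of Theorem \ref{angle} is available at every admissible vertex $x$.
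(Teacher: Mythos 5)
Your proof is correct and follows essentially the same route as the paper: Corollary \ref{angle2} is obtained by combining Theorem \ref{angle} (applied at an arbitrary admissible vertex) with the measure-zero property of the cut locus from \cite[Theorem $3.2$]{ho}, plus the non-atomicity of $\upsilon$ to discard $\{p,q\}$. The only cosmetic difference is that your explicit re-basing step is unnecessary in the paper's development, since the angle $\angle^{\upsilon}pxq$ and Theorem \ref{well3} are already formulated for an arbitrary vertex $x \in M_{\infty}\setminus(C_p\cup C_q\cup\{p\}\cup\{q\})$ rather than only at $m_{\infty}$.
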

See  Theorem \ref{well3} for the proof of Theorem \ref{angle}. 
See also \cite[Corollary A.$4$]{co-na1} by Colding-Naber.

We will also discuss some H$\ddot{\mathrm{o}}$lder continuity of angles (Corollary \ref{hold}) and show the existence of the \textit{weakly $C^{1, \alpha}$-structure} on $M_{\infty}$ in some sense (Corollary \ref{holder}) for some $\alpha=\alpha(n)<1$. 

The second main result of this paper is the following:
\begin{theorem}\label{limit}
$M_{\infty}$ has a weakly second order differential structure.
\end{theorem}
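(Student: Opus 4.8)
The plan is to build the structure on the regular set $\mathcal{R}\subset M_{\infty}$, which carries full $\upsilon$-measure by the Cheeger-Colding structure theory \cite{ch-co, ch-co1}, by upgrading the weakly $C^{1,\alpha}$-atlas of Corollary \ref{holder} to one whose transition maps admit well-defined weak second derivatives, and then to read off a Riemannian metric tensor, a Levi-Civita connection, and a Hessian operator from this atlas. Thus a weakly second order differential structure will consist of these distance-function charts together with the weak second-order calculus they support.

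First I would fix, on a subset of full measure, distance-coordinate charts of the form $\phi=(r_{p_1},\ldots,r_{p_k})$ as in Corollary \ref{holder}, where the reference points $p_i$ are chosen so that the gradients $\nabla r_{p_i}$ are linearly independent at the base point; such configurations exist on a set of full measure because a.e. tangent cone is $\mathbb{R}^k$. Theorem \ref{angle} then supplies, at $\upsilon$-a.e.\ point $x$, the inner products $g_{ij}(x)=\langle \nabla r_{p_i}(x),\nabla r_{p_j}(x)\rangle=\cos\angle p_i x p_j$, so that the metric tensor $g=(g_{ij})$ is well-defined on each chart and, by Corollary \ref{hold}, is H\"older continuous.

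The heart of the argument is to promote this first-order picture to second order. For each reference point $p$ the distance function $r_p$ is approximated by harmonic functions $b_p$ whose Hessians satisfy the Cheeger-Colding $L^2$ estimates coming from the Bochner formula and the segment inequality \cite{ch-co, ch-co1}; passing these estimates to the Gromov-Hausdorff limit produces a weak Hessian $\mathrm{Hess}\,r_p\in L^2_{\mathrm{loc}}$, defined by integration by parts against the canonical divergence structure of the doubling Poincar\'e limit space. I would then declare a function on a chart to be weakly twice differentiable when its coordinate expression admits weak second derivatives in this sense, verify via the a.e.\ chain rule that the transition maps $\phi_\beta\circ\phi_\alpha^{-1}$ are weakly twice differentiable, and thereby obtain the weakly second order differential structure. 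Since the derivatives of $g$ are expressible through the weak Hessians of the $r_{p_i}$, the metric is weakly differentiable, the Christoffel symbols $\Gamma^k_{ij}$ make sense in $L^2_{\mathrm{loc}}$, and one obtains a connection and, for any weakly twice differentiable $f$, a Hessian $\mathrm{Hess}\,f$.

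The main obstacle I expect is precisely the construction and well-posedness of these weak second-order objects on a space with no a priori smoothness: one must show that the weak Hessian of $r_p$ genuinely lies in $L^2_{\mathrm{loc}}$ rather than merely defining a measure with a singular part, that it obeys the expected symmetry and transformation laws so that $\Gamma^k_{ij}$ and $\mathrm{Hess}\,f$ are independent of the chosen chart, and that the passage to the Gromov-Hausdorff limit in the Bochner-type estimates is uniform enough to control these quantities $\upsilon$-a.e. The H\"older continuity of angles guarantees a continuous metric, but extracting its weak derivative, and hence a well-defined Levi-Civita connection, is the delicate analytic step; it is here that the angle theory developed in this paper is used decisively.
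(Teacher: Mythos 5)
Your route --- distance-function charts, a H\"older-continuous metric from the angle theory, then an upgrade to second order via weak Hessians of $r_p$ --- diverges from the paper's argument at the crucial step, and the upgrade is where it breaks down. The Cheeger--Colding/Abresch--Gromoll machinery controls $\mathrm{Hess}_{\mathbf{b}}$ only for \emph{harmonic replacements} $\mathbf{b}$ of $r_p$ at a given scale, with a bound of the form $C(n)\epsilon^{\alpha(n)}$ that is tied to that scale; integrating these scale-by-scale bounds yields exactly the weak $\alpha(n)$-H\"older continuity of $\langle dr_p, dr_q\rangle$ in Corollary \ref{hold}, and nothing stronger. There is no uniform $L^2_{\mathrm{loc}}$ bound on a weak Hessian of $r_p$ itself (already on smooth manifolds $\mathrm{Hess}\,r_p$ degenerates at the cut locus and the distributional Laplacian acquires a singular part there), and the notion of a Hessian defined ``by integration by parts against the canonical divergence structure'' is not available before the second order structure exists --- that object is what Definition \ref{fund4} and Proposition \ref{hess2} construct \emph{afterwards}, so your argument is circular at that point. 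At best your plan reproduces the weakly $C^{1,\alpha(n)}$-structure of Corollary \ref{holder} with $\alpha(n)<1$, whereas Definition \ref{2nd2nd2nd} requires the transition Jacobians to be weakly \emph{Lipschitz}, i.e.\ a weakly $C^{1,1}$-structure.

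The missing idea is to abandon distance coordinates in favor of the harmonic coordinate system of Theorem \ref{harm3}, whose coordinate functions are limits of harmonic functions with uniformly bounded Lipschitz constants and $L^2$-bounded Laplacians, and then to prove (Proposition \ref{ppp} and Corollary \ref{sob}) that for such limits $|df_\infty|^2\in H_{1,2}$ with quantitative control on the $L^2$-norm of $d|df_\infty|^2$; the proof of this uses the Kuwae--Shioya/Ding spectral convergence of Dirichlet eigenfunctions, not the excess estimate. By polarization the metric components $\langle d\phi^l_{i,s},d\phi^l_{i,t}\rangle$ are then Sobolev, hence weakly Lipschitz by Cheeger's Rademacher-type theorem on PI-doubling spaces, and Proposition \ref{we2nd} (via Theorem \ref{harm}) converts a weakly Lipschitz metric into a weakly $C^{1,1}$ atlas. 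That the two kinds of coordinates genuinely differ in regularity is the point of the remark after Corollary \ref{holder}: the Colding--Naber example admits no $C^{1,\beta}$-structure in the ordinary sense for any $\beta$, and the distance charts only ever yield the fractional exponent $\alpha(n)$.
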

See Definition \ref{2nd2nd2nd} for the precise definition of a weakly second order differential structure on metric (measure) spaces.
Note that this second order differential structure is \textit{better} than the $C^{1, \alpha}$-structure above in some sense.
In fact, for instance, we can give a suitable definition of twice differentiable functions on a space having a weakly second order differential structure (Definition \ref{2ndd}).
We will show that all eigenfunctions with respect to the Dirichlet problem on $M_{\infty}$ are weakly twice differentiable (Corollary \ref{eigen}).

On the other hand, Cheeger defined a notion of a weak Riemannian metric in Section $4$ of \cite{ch1} which is known to be well-defined on $M_{\infty}$ by Section $7$ of \cite{ch-co3} by Cheeger-Colding.
We will review the definition of this notion in Theorems \ref{29292} and \ref{harm3}.
We will show that the Riemannian metric of $M_{\infty}$ is \textit{Lipschitz} in some sense with respect to a weakly second order differential structure as in Theorem \ref{limit}. See Theorem \ref{harm}.
As corollaries, we will show that the \textit{Levi-Civita connection on $M_{\infty}$} exists uniquely (Theorem \ref{levi3}), and study the \textit{Hessian of a twice differentiable function} (Proposition \ref{hess2}).  

For example, let $(Z, z)$ be a noncollapsing Gromov-Hausdorff limit of a sequence of pointed complete $n$-dimensional Einstein manifolds $\{(\hat{M}_i, \hat{m}_i)\}_i$ with $\mathrm{Ric}_{\hat{M}_i}=H(n-1)$,  where $H$ is a fixed real number.
Then in \cite{ch-co1} Cheeger-Colding showed that the regular set $\mathcal{R}$ of $Z$ is open and a smooth Riemannian manifold.
We see that the Levi-Civita connection given in this paper coincides with that defined by the smooth structure of $\mathcal{R}$.
See Theorems \ref{harm3}, \ref{levi3}, \ref{harm} and \cite[Theorem $7.3$]{ch-co1} for the details. 

Next we give a remark about Theorem \ref{limit}.
For that, we now recall a celebrated work for \textit{(measurable) differentiable structure} on metric measure spaces by Cheeger. 
In \cite{ch1}, Cheeger showed that a metric measure space satisfying the Poincar\'e inequality and doubling condition has a differentiable structure in some sense.
For instance, we can also find very interesting examples of them in \cite{laa, pans} by Laakso and Pansu.
See also \cite{keith1} by Keith.
Note that $M_{\infty}$ with a limit measure is a typical example of them. 
It is important that we can discuss the once differentiability for functions on such metric measure spaces.
In fact, it is shown that all Lipschitz functions on such spaces are differentiable almost everywhere in some sense, as in Rademacher's theorem \cite{rad}. 
On the other hand, in general, it seems that it is not easy to give a suitable definition of a second order differential structure on metric measure spaces.
However, in several situations, e.g., Alexandrov spaces, we can consider such a second order differential structure (see for instance \cite{bgp, ot, ot-sh, pere, pere1} by Burago-Gromov-Perelman, Otsu, Otsu-Shioya and Perelman). 
The notion of weakly second order differential structure on metric measure spaces given in this paper gives such a framework including limit spaces of Riemannian manifolds with a lower Ricci curvature bound.

Finally we introduce fundamental tools used in the proofs of Theorems \ref{angle} and \ref{limit}. 
In the proof of Theorem \ref{angle}, we will essentially use the proof of Cheeger-Colding's splitting theorem \cite[Theorem $6.64$]{ch-co} and several fundamental properties of the convergence of the differentials of Lipschitz functions with respect to the measured Gromov-Hausdorff topology given in \cite{Ho} by the author.
In the proof of Theorem \ref{limit}, we will essentially use several fundamental properties of the convergence of spectral structures with respect to the measured Gromov-Hausdorff topology given in \cite{KS} by Kuwae-Shioya and also use several results given in \cite{Ho} again.

As a continuation of this paper, in \cite{Ho5} we will prove a Bochner-type inequality on $M_{\infty}$ which keeps the term of Hessian defined in Section $3$, discuss a weak $L^2$-convergence of Hessians with respect to the Gromov-Hausdorff topology and give a relationship between the Laplacian defined by using the twice differential structure in Section $3$ and the Dirichlet Laplacian defined by Cheeger-Colding in \cite{ch-co3}.
In particular, we will show that in noncollapsing setting, these Laplacians coincide on a dense subspace in $L^2$.

The organization of this paper is as follows:

In Section $2$, we will introduce several fundamental notions on metric measure spaces and on limit spaces of Riemannian manifolds.
In Section $3$, we will give the definition of the weakly second order differential structure on metric measure spaces and study several properties.
In Section $4$, we will give the proofs of Theorems \ref{angle} and \ref{limit}.
 
\textbf{Acknowledgments.}
The author would like to express my appreciation to Tobias Holck Colding and Aaron Naber for helpful comments.
He is grateful to Kazuhiro Kuwae for giving valuable suggestions.
He wishes to thank Ayato Mitsuishi, Koichi Nagano, Takashi Shioya and Takao Yamaguchi, for giving valuable comments at the informal geometry seminar at Tsukuba University.
He was supported  by Grant-in-Aid for Research Activity Start-up $22840027$ from JSPS and Grant-in-Aid for Young Scientists (B) $24740046$.
\section{Preliminaries}
In this section, we will introduce several fundamental notions on metric measure spaces and on limit spaces of Riemannian manifolds.
Let $X$ be a metric space.
For $R>0$, $x \in X$, we set $B_R(x)=\{y \in X; \overline{x,y}<R\}$ and $\overline{B}_R(x)=\{y \in X; \overline{x,y}\le R\}$.
\subsection{Metric measure spaces}
We say that $X$ is \textit{proper} if every bounded closed subset of $X$ is compact.
We say that $X$ is a \textit{geodesic space} if for every $x, y \in X$, there exists a minimal geodesic $\gamma$ from $x$ to $y$.
Let $\upsilon$ be a Radon measure on $X$.
In this paper, we say that $(X, \upsilon)$ is a \textit{metric measure space} if $X$ is a proper geodesic space and if $\upsilon(B_r(x))>0$ for every $x \in X$ and every $r>0$.
We now recall the notion of \textit{rectifiability} for metric measure spaces defined by Cheeger-Colding in \cite{ch-co3}:
\begin{definition}\cite[Definition $5.3$]{ch-co3}\label{rectifiable}
Let $(X, \upsilon)$ be a metric measure space.
We say that \textit{$X$ is weakly $\upsilon$-rectifiable (or $(X, \upsilon)$ is weakly rectifiable)} if there exist a positive integer $m$, a collection of Borel subsets $\{C_{i}^l\}_{1 \le l \le m, i \in \mathbf{N}}$ of $X$, and a collection of bi-Lipschitz embedding maps $\{ \phi_{i}^l: C_{i}^l \rightarrow \mathbf{R}^l \}_{l, i}$ with the following properties $(1)$ and $(2)$:
\begin{enumerate}
\item $\upsilon(X \setminus \bigcup_{l,i}C_{i}^l)=0$.
\item $\upsilon$ is Ahlfors $l$-regular at every $x \in C_{i}^l$, i.e., there exist $C \ge 1$ and $r>0$ such that 
$C^{-1} \le \upsilon (B_t(x)) / t^l \le C$ 
for every $0<t<r$.
\end{enumerate}
Moreover we say that \textit{$X$ is $\upsilon$-rectifiable (or $(X, \upsilon)$ is rectifiable)} if the following condition holds: 
\begin{enumerate}
\item[(3)] For every $l$, every $x \in \bigcup_{i \in \mathbf{N}}C_{i}^l$ and every $0 < \delta < 1$, there exists $i$ such that $x \in C_{i}^l$ and that 
a map $\phi_{i}^l$ is $(1 \pm \delta)$-bi-Lipschitz to the image $\phi_{i}^l(C_{i}^l)$.
\end{enumerate}
\end{definition}
Our third condition is a strong additional condition not usually required in the definition of rectifiable spaces.
See \cite[Definition $5.3$]{ch-co3} for the standard definition by Cheeger-Colding.
This third condition is $iii)$ of page $60$ in \cite{ch-co3} and holds on all limit spaces of Riemannian manifolds we are studying in this paper.

In this paper, we say that a family $\{(C_{i}^l, \phi_{i}^l)\}_{l, i}$ as in Definition \ref{rectifiable} is  a \textit{(weakly) rectifiable coordinate system of $(X, \upsilon)$} if $X$ is (weakly) $\upsilon$-rectifiable. 
See also \cite{keith1} by Keith.
It is important that the cotangent bundle on a rectifiable metric measure space exists in some sense.
We now give several fundamental properties of the cotangent bundle:
\begin{theorem}[Cheeger, Cheeger-Colding, \cite{ch1, ch-co3}]\label{29292}
Let $(X, \upsilon)$ be a rectifiable metric measure space.
Then, there exist a topological space $T^*X$ and a Borel map  $\pi:T^*X \rightarrow X$ with the following properties:
\begin{enumerate}
\item $\upsilon(X \setminus \pi (T^*X))=0$.
\item $\pi^{-1}(w) (=T^*_wX)$ is a finite dimensional real vector space with canonical inner product $\langle \cdot, \cdot \rangle_w$ for every $w \in \pi(T^*X)$ $(|v|(w)= \sqrt{\langle v, v\rangle_w})$.
\item For every Lipschitz function $f$ on $X$, there exist a Borel subset $V$ of $X$, and a Borel map $df$ (called  \textit{the differential of $f$}) from $V$ to $T^*X$ such that $\upsilon(X \setminus V)=0$
and that $\pi \circ df(w)=w$, $|df|(w)=\mathrm{Lip}f(w)=Lipf(w)$ for every $w \in V$, where 
\begin{enumerate}
\item $\mathrm{Lip}f(x)=\lim_{r \to 0}(\sup_{y \in B_r(x)\setminus \{x\}}(|f(x)-f(y)|/\overline{x, y}))$ and
\item $Lip f(x)=\liminf_{r \to 0}(\sup_{y \in \partial B_r(x)}(|f(x)-f(y)|/\overline{x, y})).$
\end{enumerate}
\end{enumerate} 
\end{theorem}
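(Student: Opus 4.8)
The plan is to construct $T^*X$ by gluing the Euclidean cotangent spaces transported through the bi-Lipschitz charts of a rectifiable coordinate system, and to force the fiberwise norm to be canonical by exploiting the $(1\pm\delta)$-bi-Lipschitz refinement of Definition \ref{rectifiable}(3). First I would put the charts into a convenient form. The Ahlfors $l$-regularity of Definition \ref{rectifiable}(2) determines the exponent $l$ at $\upsilon$-a.e. point through the volume growth $\upsilon(B_t(w))/t^l$, so the set of points of each dimension is Borel and the sets of distinct dimensions are essentially disjoint. Discarding a $\upsilon$-null set, which is permitted by Definition \ref{rectifiable}(1), I would refine $\{C_i^l\}$ into a countable Borel partition $\{A_k\}$ of the remaining set, each $A_k$ lying in some $C_{i(k)}^{l(k)}$ and carrying a bi-Lipschitz chart $\phi_k=(\phi_k^1,\dots,\phi_k^{l(k)}):A_k\to\mathbf{R}^{l(k)}$. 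I then set $T^*X:=\bigsqcup_k(A_k\times\mathbf{R}^{l(k)})$ with the disjoint-union topology and let $\pi$ be projection onto the first factor; this gives property (1) and makes each fiber $\pi^{-1}(w)=\{w\}\times\mathbf{R}^{l(k)}$ a finite-dimensional real vector space, the linear part of (2).

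To define the differential, I would transport a fixed Lipschitz $f$ through a chart: $\tilde f:=f\circ\phi_k^{-1}$ is Lipschitz on $\phi_k(A_k)\subseteq\mathbf{R}^{l(k)}$, and after a McShane extension the classical Rademacher theorem \cite{rad} makes it differentiable $\mathcal{L}^{l(k)}$-a.e. Since Ahlfors regularity makes $\upsilon|_{A_k}$ comparable to $l(k)$-dimensional Lebesgue measure on the image and almost every point of $A_k$ has full density, this yields at $\upsilon$-a.e. $w\in A_k$ a unique vector $(b_1(w),\dots,b_{l(k)}(w))$ with $f(y)-f(w)=\sum_j b_j(w)(\phi_k^j(y)-\phi_k^j(w))+o(\overline{w,y})$. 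Setting $df(w):=(b_1(w),\dots,b_{l(k)}(w))\in\pi^{-1}(w)$ produces a Borel map with $\pi\circ df=\mathrm{id}$, giving property (3) except for the norm identities.

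The heart of the matter, and the step I expect to be the main obstacle, is the canonical inner product $\langle\cdot,\cdot\rangle_w$ and its chart-independence. Here Definition \ref{rectifiable}(3) is indispensable: for each $\delta$ it supplies a chart through $w$ of the same dimension that is $(1\pm\delta)$-bi-Lipschitz, so along it the metric difference quotient $\overline{w,y}$ and the Euclidean distance $|\phi(w)-\phi(y)|$ agree up to a factor $1+O(\delta)$. Consequently both pointwise Lipschitz constants $\mathrm{Lip}f(w)$ and $Lipf(w)$ are squeezed between $(1\pm\delta)$ multiples of the Euclidean gradient norm $|\nabla\tilde f(\phi(w))|$; letting $\delta\to0$ along such refining charts forces $\mathrm{Lip}f(w)=Lipf(w)=|\nabla\tilde f(\phi(w))|$ $\upsilon$-a.e. and defines a norm on the fiber. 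Because each finite-$\delta$ norm is the pullback of a Euclidean norm, it obeys the parallelogram law up to $O(\delta)$, so the limiting norm obeys it exactly and polarizes to the inner product $\langle\cdot,\cdot\rangle_w$ with $|df|(w)=\mathrm{Lip}f(w)$.

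The delicate points are the measurable chain rule identifying the differentials of the $\phi_k^j$ under a change of chart, and the proof that the $\delta\to0$ limit exists and is genuinely independent of the chosen refining sequence rather than an artifact of the partition; controlling these uniformly and Borel-measurably over all the $A_k$ is what makes the construction technical. This is exactly the theory developed by Cheeger and Cheeger-Colding, to which I would refer \cite{ch1, ch-co3} for the full verification.
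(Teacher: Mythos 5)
Your construction is essentially the paper's: both build $T^*X$ fiberwise over the chart images via Rademacher's theorem, define the fiber norm through the pointwise Lipschitz constant, and invoke the $(1\pm\delta)$-bi-Lipschitz refinement in condition $(3)$ of Definition \ref{rectifiable} to force the parallelogram law and hence polarize to the canonical inner product, while deferring the measurable chain rule and the a.e.\ identification $\mathrm{Lip}f=Lipf=|df|$ to \cite{ch1, ch-co3} exactly as the paper does. The only real difference is cosmetic: you disjointify the charts and take a plain disjoint union $\bigsqcup_k(A_k\times\mathbf{R}^{l(k)})$, whereas the paper keeps overlapping charts and glues the fibers by the transposed Jacobians of the transition maps, which retains the overlap compatibility data used later for forms and tensors.
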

We now give a short review of the construction of the cotangent bundle $T^*X$ as in Theorem \ref{29292}:
Let $\{(C_{i}^l, \phi_{i}^l)\}_{l, i}$ be a rectifiable coordinate system of $(X, \upsilon)$.
By the classical Rademacher's theorem and Definition \ref{rectifiable}, without loss of generality, we can assume that the following properties hold:
\begin{enumerate}
\item Every $\phi_i^l \circ (\phi_j^l)^{-1} : \phi_j^l(C_i^l \cap C_j^l) \to \phi_i^l(C_i^l \cap C_j^l)$ is differentiable at every $w \in \phi_j^l(C_i^l \cap C_j^l)$ (see Section $3.1$ for the notion of differentiability for a Lipschitz function defined on a Borel subset of Euclidean space).
\item For every $i, l$, $x \in C_i^l$ and every $(a_1, \ldots, a_l), (b_1, \ldots, b_l) \in \mathbf{R}^l$, 
\begin{enumerate}
\item $\mathrm{Lip}\left(\sum_ja_j\phi_{i,j}^l\right)(x)=Lip \left(\sum_ja_j\phi^l_{i, j}\right)(x)$,
\item $\mathrm{Lip}\left(\sum_ja_j\phi_{i,j}^l\right)(x)=0$ holds if and only if $(a_1, \ldots, a_l)=0$ holds.
\item $\mathrm{Lip}\left(\sum_j(a_j+b_j)\phi^l_{i,j}\right)(x)^2+\mathrm{Lip}\left(\sum_j(a_j-b_j)\phi^l_{i,j}\right)(x)^2=2\mathrm{Lip}\left(\sum_ja_j\phi^l_{i,j}\right)(x)^2+2\mathrm{Lip}\left(\sum_jb_j\phi^l_{i,j}\right)(x)^2$.
\end{enumerate}
\item For every Lipschitz function $f$ on $X$, we have $\mathrm{Lip}f(x)=Lip f(x)$ for a.e. $x \in X$.
\end{enumerate}
For points $(x, u), (y, v) \in \bigsqcup_{i, l} (\phi_i^l(C_i^l) \times \mathbf{R}^l)$, we define $(x, u) \sim (y, v)$ if $x = \phi_i^l \circ (\phi_j^l)^{-1}(y)$ and $u=J(\phi_i^l \circ (\phi_j^l)^{-1})(y)^tv$ for some $i, j, l$, where $J(f)$ is the Jacobi matrix of a function $f$.
We set $T^*X= \left(\bigsqcup_{i, l}( \phi_i^l(C_i^l) \times \mathbf{R}^l)\right)/ \sim$ and define a map $\pi$ by $\pi (x,u)=(\phi_i^l)^{-1}(x)$ if $x \in \phi_i^l(C_i^l)$.
By the condition $(b)$ above, for every $x \in \pi(T^*X)$ with $x \in C_i^l$, $|a|_x=\mathrm{Lip}\left(\sum_ja_j\phi^l_{i,j}\right)(x)$ is a norm on $\mathbf{R}^l$.
By the condition $(c)$ above, which follows from the $\upsilon$-rectifiable condition $(3)$, we see that the norm comes from an inner product $\langle \cdot, \cdot \rangle_x$ on $\mathbf{R}^l$. 
Then it is easy to check that $(T^*X, \pi, \langle \cdot, \cdot \rangle_x)$ satisfies the conditions as in Theorem \ref{29292}.

See Section $6$ in \cite{ch-co3} by Cheeger-Colding for the details, and page $458-459$ of \cite{ch1} by Cheeger for a more general case.
We set $T^*A=\pi^{-1}(A)$ for every subset $A$ of $X$.
\subsection{Limit spaces of Riemannian manifolds with a lower Ricci curvature bound}
We recall the definition of \textit{Gromov-Hausdorff convergence}.
Let $\{(X_i, x_i)\}_{1 \le i \le \infty}$ be a sequence of pointed proper geodesic spaces.
We say that \textit{$(X_i, x_i)$  Gromov-Hausdorff converges to $(X_{\infty}, x_{\infty})$} if 
there exist sequences of positive numbers $\epsilon_i \to 0$, $R_i \to \infty$ and of maps $\phi_i:  B_{R_i}(x_i) \to B_{R_i}(x_{\infty})$
 (called \textit{an $\epsilon_i$-almost isometry}) with the following three properties: 
\begin{enumerate}
\item $|\overline{x,y}-\overline{\phi_i(x), \phi_i(y)}|<\epsilon_i$ for every $x, y \in B_{R_i}(x_i)$.
\item $B_{R_i}(x_{\infty}) \subset B_{\epsilon_i}(\mathrm{Image} (\phi_i))$.
\item $\phi_i(x_i) \to x_{\infty}$ (denote it by $x_i \to x_{\infty}$ for the sake of simplicity).
\end{enumerate}
See \cite{gr} by Gromov.
We denote it by $(X_i, x_i) \to (X_{\infty}, x_{\infty})$ for brevity.

Moreover, we now give the definition of \textit{measured Gromov-Hausdorff convergence}. 
For a sequence $\{\upsilon_i\}_{1\le i \le \infty}$ of Borel measures $\upsilon_i$ on $X_i$, 
we say that \textit{$\upsilon_{\infty}$ is the limit measure of $\{\upsilon_i\}_i$} if $\upsilon_i(B_r(y_i)) \to \upsilon_{\infty}(B_r(y_{\infty}))$ 
for every $r>0$ and every sequence $\{y_i\}_i$ of points $y_i \in X_i$ with $\phi_i(y_i) \to y_{\infty}$ (denote it by $y_i \to y_{\infty}$).
See \cite{fu} by Fukaya for the original definition and see also \cite{ch-co1} by Cheeger-Colding for this version.
Then we denote it by  $(X_i, x_i, \upsilon_i) \to (X_{\infty}, x_{\infty}, \upsilon_{\infty})$ for brevity.

Let $n \in \mathbf{N}$, $K \in \mathbf{R}$ and let $(M_{\infty}, m_{\infty})$ be a pointed proper geodesic space.
We say that \textit{$(M_{\infty}, m_{\infty})$ is an $(n, K)$-Ricci limit space (of $\{(M_i, m_i)\}_i$)} if there exist
sequences of real numbers $K_i \to K$ and  of pointed complete $n$-dimensional Riemannian manifolds $\{(M_i, m_i)\}_i$ with $\mathrm{Ric}_{M_i} \ge K_i(n-1)$ such that
$(M_i, m_i) \to (M_{\infty}, m_{\infty})$.
We call an $(n, -1)$-Ricci limit space a \textit{Ricci limit space} for brevity.
Moreover we say that a Radon measure $\upsilon$ on $M_{\infty}$ is the \textit{limit measure of $\{(M_i, m_i)\}_i$}
if $\upsilon$ is the limit measure of $\{\mathrm{vol}/\mathrm{vol}\,B_1(m_i)\}_i$.
Then we say that $(M_{\infty}, m_{\infty}, \upsilon)$ is the Ricci limit space of $\{(M_i, m_i, \mathrm{vol}/\mathrm{vol}\,B_1(m_i))\}_i$.

Assume that $(M_{\infty}, m_{\infty}, \upsilon)$ is the Ricci limit space of $\{(M_i, m_i, \mathrm{vol}/\mathrm{vol}\,B_1(m_i))\}_i$.
Cheeger-Colding have proven that the $(1, p)$-Sobolev space $H_{1,p}(U)$ on every open subset $U$ of $M_{\infty}$ is well-defined for every $1< p < \infty$ and that for every $f \in H_{1,p}(U)$, the differential $df(x) \in T^*_xM_{\infty}$ is well-defined for a.e. $x \in U$. See \cite[Theorems $4.14$ and $4.47$]{ch1} by Cheeger for the detail.  

Cheeger-Colding proved the existence of rectifiable coordinate system, defined as in Definition \ref{rectifiable}, constructed from harmonic functions:
\begin{theorem}\cite[Theorem $3.3, 5.5$ and $5.7$]{ch-co3}\label{harm3}
There exists a rectifiable coordinate system $\{(C_{i}^l, \phi_{i}^l)\}_{l, i}$ of $(M_{\infty}, \upsilon)$ such that the following property holds:
There exists a subsequence $\{k(j)\}_j \subset \mathbf{N}$ such that 
for every $l, i$, there exist $x_{\infty} \in M_{\infty}$, $r>0$ with $C_i^l \subset B_r(x_{\infty})$, a sequence $\{x_{k(j)}\}_j$ of $x_{k(j)} \in M_{k(j)}$ with $x_{k(j)} \to x_{\infty}$, 
 a sequence $\{f_{k(j), s}\}_{j, s}$ of harmonic functions $f_{k(j), s}$ on $B_r(x_{k(j)})$
such that $\sup_{j, s}\mathbf{Lip}f_{k(j), s}<\infty$,
$f_{k(j), s} \to \phi_{i, s}^{l}$ on $C_{i}^l$ as $j \to \infty$ for every $s$, where $\mathbf{Lip}f$ is the Lipschitz constant of $f$ and $\phi_{i}^l=(\phi_{i, 1}^l, \ldots, \phi_{i, k}^l)$.
\end{theorem}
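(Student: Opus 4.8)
The plan is to realize the coordinate functions as Gromov-Hausdorff limits of harmonic functions on the approximating manifolds $M_{k(j)}$, exploiting the almost splitting theorem near regular points. By the Cheeger-Colding structure theory, $\upsilon$-almost every $x\in M_{\infty}$ is an $l$-regular point for some $l$, i.e. one at which every tangent cone is isometric to $\mathbf{R}^l$; hence the regular set $\mathcal{R}=\bigcup_l\mathcal{R}_l$ has full $\upsilon$-measure. It therefore suffices, for each $l$, to cover $\mathcal{R}_l$ up to a $\upsilon$-null set by countably many charts $(C_i^l,\phi_i^l)$ of the required type, and then to arrange by a single diagonal argument that all of them are approximated along one subsequence $\{k(j)\}$.

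First I would fix an $l$-regular point $x_{\infty}$ together with a small radius $r>0$ at which $B_r(x_{\infty})$ is Gromov-Hausdorff close to the Euclidean ball $B_r(0)\subset\mathbf{R}^l$. Choosing points $z_{\infty,1},\dots,z_{\infty,l}$ realizing the $l$ almost-orthogonal splitting directions and lifting them to $z_{k(j),s}\in M_{k(j)}$, I would define $f_{k(j),s}$ as the solution on $B_r(x_{k(j)})$ of the Dirichlet problem with boundary value $\overline{z_{k(j),s},\cdot}$. These functions are harmonic and, by the Cheng-Yau gradient estimate together with the lower Ricci bound, satisfy a uniform Lipschitz bound; hence $\sup_{j,s}\mathbf{Lip}f_{k(j),s}<\infty$. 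Equicontinuity and the Arzel\`a-Ascoli theorem then permit extracting, along a diagonal subsequence $\{k(j)\}$, limit functions $\phi_{i,s}^l$ on $M_{\infty}$ with $f_{k(j),s}\to\phi_{i,s}^l$ uniformly on $C_i^l$ for every $s$.

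The heart of the matter is the almost splitting theorem, whose proof (via the segment inequality, the Abresch-Gromoll excess estimate and the Bochner formula) yields the integral Hessian bound
\[
\frac{1}{\mathrm{vol}\,B_r(x_{k(j)})}\int_{B_r(x_{k(j)})}\bigl|\mathrm{Hess}\,f_{k(j),s}\bigr|^2 \longrightarrow 0
\]
together with the almost-orthonormality
\[
\frac{1}{\mathrm{vol}\,B_r(x_{k(j)})}\int_{B_r(x_{k(j)})}\bigl|\langle\nabla f_{k(j),a},\nabla f_{k(j),c}\rangle-\delta_{ac}\bigr| \longrightarrow 0 .
\]
Passing to the limit and upgrading these averaged bounds to pointwise control on a subset of definite measure, I would conclude that $\phi_i^l=(\phi_{i,1}^l,\dots,\phi_{i,l}^l)$ is $(1\pm\delta)$-bi-Lipschitz on $C_i^l$. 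A Vitali covering argument on $\mathcal{R}_l$, run with $\delta\downarrow 0$, then produces the countable family satisfying conditions $(1)$--$(3)$ of Definition \ref{rectifiable}, and the Ahlfors $l$-regularity of condition $(2)$ follows from volume convergence combined with the bi-Lipschitz property.

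The main obstacle is exactly this passage from the averaged estimates to genuine pointwise bi-Lipschitz bounds with constants tending to $1$: controlling $\mathrm{Lip}\,\phi_i^l$ and the inverse Lipschitz constant simultaneously requires the segment inequality to bound, for pairs $x,y$, the deviation of $\overline{\phi_i^l(x),\phi_i^l(y)}$ from $\overline{x,y}$ by an integral of $|\mathrm{Hess}\,f_{k(j),s}|$ along almost-geodesics, after which a bad set of small measure must be discarded. Carrying this out uniformly, so that a single subsequence $\{k(j)\}$ serves the entire countable atlas, is what forces the diagonal extraction and is the most delicate point of the argument.
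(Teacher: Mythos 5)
This statement is imported by the paper as a citation of Cheeger--Colding (\cite[Theorems 3.3, 5.5 and 5.7]{ch-co3}); the paper itself gives no proof, so the only meaningful comparison is with the cited argument. Your sketch is a faithful reconstruction of that argument --- harmonic replacement of distance functions near regular points via the almost splitting theorem, Cheng--Yau gradient bounds, integral Hessian and almost-orthonormality estimates upgraded to $(1\pm\delta)$-bi-Lipschitz control through the segment inequality and a maximal-function/bad-set argument, followed by a Vitali covering and a diagonal subsequence --- and it correctly identifies the passage from averaged to pointwise estimates as the delicate step.
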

See Definition \ref{conv lip} for the definition of the pointwise convergence of functions $f_i \to f_{\infty}$ with respect to the Gromov-Hausdorff topology.
More recently the author proved the existence of rectifiable coordinate system constructed from distance functions:
\begin{theorem}\cite[Theorem $3.1$]{Ho}\label{dist3}
There exists a rectifiable coordinate system $\{(C_{i}^l, \phi_{i}^l)\}_{1 \le l \le n, i<\infty}$ of $(M_{\infty}, \upsilon)$ such that 
every $\phi_{i, s}^l$ is the distance function from a point in $M_{\infty}$.
\end{theorem}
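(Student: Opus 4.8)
The plan is to reduce the statement to the harmonic rectifiable coordinate system of Theorem \ref{harm3} and then to replace each harmonic coordinate function by a distance function, exploiting the fact that the harmonic functions produced in Theorem \ref{harm3} are, by the very construction in Cheeger-Colding's proof of the splitting theorem \cite[Theorem $6.64$]{ch-co}, built as harmonic replacements of (rescaled) distance/Busemann functions from points $p_{k(j),s}\in M_{k(j)}$. The key input is the splitting estimate, which says that at the level of gradients the harmonic function and the distance function become indistinguishable in the limit:
\begin{equation}
\frac{1}{\mathrm{vol}\,B_r(x_{k(j)})}\int_{B_r(x_{k(j)})}\bigl|\nabla f_{k(j),s}-\nabla r_{p_{k(j),s}}\bigr|^2\,d\mathrm{vol}\longrightarrow 0\qquad (j\to\infty),
\end{equation}
where $r_{p}(\cdot)=\overline{p,\cdot}$. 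Thus the harmonic coordinate functions of Theorem \ref{harm3} and suitable distance functions carry the same infinitesimal data.

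First I would fix $l,i$ and, after passing to a further subsequence, arrange that $p_{k(j),s}\to p_{s}\in M_{\infty}$ and $r_{p_{k(j),s}}\to r_{p_{s}}$ in the pointwise sense with respect to the Gromov-Hausdorff topology (Definition \ref{conv lip}), keeping the $p_{k(j),s}$ at controlled finite distance so that each limit $p_{s}$ is a genuine point of $M_{\infty}$. Next I would invoke the convergence theory for differentials of uniformly Lipschitz functions from \cite{Ho}: along the sequence one has $df_{k(j),s}\to d\phi^l_{i,s}$ and $dr_{p_{k(j),s}}\to dr_{p_{s}}$ in the appropriate $L^2$-sense, so the displayed estimate passes to the limit to give that $dr_{p_s}$ is $L^2(C_i^l)$-close to $d\phi^l_{i,s}$. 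Since $\{(C_i^l,\phi^l_i)\}$ is a rectifiable coordinate system, the frame $d\phi^l_{i,1},\dots,d\phi^l_{i,l}$ is almost orthonormal in $T^*_xM_{\infty}$ for $\upsilon$-a.e.\ $x$; by Chebyshev, on a Borel subset of $C_i^l$ of almost full $\upsilon$-measure the frame $dr_{p_1},\dots,dr_{p_l}$ is then almost orthonormal as well.

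To conclude I would promote this almost-orthonormality of differentials to a genuine chart. Because each $p_s$ is at finite distance and $x$ is a regular point, $B_r(x)$ is Gromov-Hausdorff close to a Euclidean ball, $|\nabla r_{p_s}|\equiv 1$ wherever $r_{p_s}$ is differentiable, and the Cheeger-Colding almost-rigidity together with the almost-orthogonality forces the tuple $\Phi=(r_{p_1},\dots,r_{p_l})$ to be $(1\pm\delta)$-bi-Lipschitz onto its image. A standard Borel refinement of the $C_i^l$ then yields subsets on which $\Phi$ is an honest $(1\pm\delta)$-bi-Lipschitz embedding into $\mathbf{R}^l$; relabelling these subsets and the associated distance maps produces the desired system $\{(C_i^l,\phi^l_i)\}_{1\le l\le n,\ i<\infty}$ with each $\phi^l_{i,s}=r_{p_{i,s}^l}$ a distance function from a point of $M_{\infty}$.

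The main obstacle is the control of distance functions, which are only Lipschitz and are typically non-differentiable along cut loci: one must justify both the $L^2$-convergence $dr_{p_{k(j),s}}\to dr_{p_{s}}$ of these non-smooth functions under measured Gromov-Hausdorff convergence and the $\upsilon$-a.e.\ differentiability of the limit $r_{p_s}$, and one must upgrade the resulting \emph{a.e.} almost-orthonormality of differentials to a genuine $(1\pm\delta)$-bi-Lipschitz chart, which needs the pointwise gradient bound $|\nabla r_{p_s}|=1$ and the almost-rigidity of \cite{ch-co}. This is exactly where the gradient-convergence machinery of \cite{Ho} and the fact that cut loci are $\upsilon$-null \cite[Theorem $3.2$]{ho} are essential; a secondary difficulty is keeping the approximating points at finite distance so that each $p_s$ is an honest point of $M_{\infty}$ rather than escaping to infinity.
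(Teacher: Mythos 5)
The paper does not actually prove Theorem \ref{dist3}: it is quoted verbatim from \cite[Theorem $3.1$]{Ho}, so there is no internal proof here to compare your argument against. Judged on its own terms, your sketch assembles the right ingredients (the almost-splitting gradient estimate comparing $\nabla \mathbf{b}$ with $\nabla r_p$, the convergence $dr_{p_{k(j),s}}\to dr_{p_s}$ from \cite{Ho}, the $\upsilon$-nullity of cut loci from \cite{ho}), but two points deserve comment. First, the detour through the harmonic system of Theorem \ref{harm3} is an unnecessary inversion of the logic: the argument in \cite{Ho} works directly at a point of the regular set $\mathcal{R}_k$, choosing $p_1,\dots,p_k$ so that the average of $\sum_{s,t}\left|\langle dr_{p_s},dr_{p_t}\rangle-\delta_{st}\right|$ over a small ball is small; the harmonic functions enter only as an intermediate comparison object, exactly as in Proposition \ref{hol} of this paper, rather than as the coordinate system one starts from.

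Second, and more substantively, your final step --- upgrading the $\upsilon$-a.e.\ almost-orthonormality of the differentials to a genuine $(1\pm\delta)$-bi-Lipschitz chart --- is not ``a standard Borel refinement.'' A.e.\ control of $\langle dr_{p_s},dr_{p_t}\rangle$ is infinitesimal information; to get the two-sided distance comparison $|\Phi(y)-\Phi(z)|=(1\pm\delta)\overline{y,z}$ for pairs of points one must integrate this information along geodesics between most pairs, which requires the Cheeger--Colding segment inequality. This is precisely the content of \cite[Lemma $3.14$]{Ho}, which the present paper itself invokes in the bi-Lipschitz embedding proposition at the end of Section $4.1$; you flag the difficulty but do not supply the mechanism that resolves it. Finally, note that condition $(3)$ of Definition \ref{rectifiable} demands charts of quality $(1\pm\delta)$ for \emph{every} $\delta$ around a.e.\ point, so your construction must be run for a sequence $\delta_m\to 0$ (with the reference points $p_s$ pushed farther and farther away) and the resulting countable families combined; a single choice of $\delta$ does not suffice.
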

In Section $4$, roughly speaking, we will show the following:
\begin{enumerate}
\item A rectifiable coordinate system as in Theorem \ref{dist3} implies a \textit{weakly $C^{1, \alpha}$-structure of $M_{\infty}$} for some $\alpha=\alpha (n)<1$.
\item A rectifiable coordinate system as in Theorem \ref{harm3} implies a
\textit{weakly second order differential structure of $M_{\infty}$}.
\end{enumerate}
See Corollary \ref{holder} and Theorem \ref{harm} for these precise statements.
\begin{definition}\label{conv lip}
Given functions $f_i: B_R(m_i) \to \mathbf{R}$ and $x_{\infty} \in B_R(m_{\infty})$, we say that
\textit{$f_i$ converges to $f_{\infty}$ at $x_{\infty}$} if for any sequence $x_i \in B_R(m_i)$ such that $x_i \to x_{\infty}$ we have $f_i(x_i) \to f_{\infty}(x_{\infty})$.
We denote this by $f_i \to f_{\infty}$ at $x_{\infty}$.
If this holds for all $x_{\infty} \in B_R(m_{\infty})$ we say $f_i \to f_{\infty}$ on $B_R(m_{\infty})$. 
\end{definition}
Finally, we introduce the definition of a convergence of the differentials of Lipschitz functions with respect to the measured Gromov-Hausdorff topology given in \cite{Ho}.
\begin{definition}\cite[Definitions $1.1$ or $4.4$]{Ho}\label{plplplp}
Given Lipschitz functions $f_i: B_R(m_i) \to \mathbf{R}$ and $x_{\infty} \in B_R(m_{\infty})$,
we say that \textit{$df_i$ converges to $df_{\infty}$ at $x_{\infty}$} if 
\begin{equation}
\sup_i\mathbf{Lip}f_i<\infty
\end{equation}
for every $\epsilon > 0$ and every $z_i \to z_{\infty}$, there exists $r>0$ such that 
\begin{equation}
\limsup_{i \rightarrow \infty}\left|\frac{1}{\mathrm{vol}\,B_t(x_i)}\int_{B_t(x_i)}\langle dr_{z_i}, df_i\rangle d\mathrm{vol}-\frac{1}{\upsilon (B_t(x_{\infty}))}\int_{B_t(x_{\infty})}\langle dr_{z_{\infty}}, df_{\infty}\rangle d\upsilon \right|< \epsilon
\end{equation}
and 
\begin{equation}
\limsup_{i \rightarrow \infty}\frac{1}{\mathrm{vol}\,B_t(x_i)}\int_{B_t(x_i)}|df_i|^2d\mathrm{vol}\le \frac{1}{\upsilon(B_t(x_{\infty}))}\int_{B_t(x_{\infty})}|df_{\infty}|^2d\upsilon + \epsilon
\end{equation}
for every $0 < t < r$ and every $x_i \to x_{\infty}$, where $r_z$ is the distance function from $z$.
We denote this by $df_i \to df_{\infty}$ at $x_{\infty}$.
If this holds for all $x_{\infty} \in B_R(m_{\infty})$ we say $df_i \to df_{\infty}$ on $B_R(m_{\infty})$.
\end{definition}
We write: $(f_i, df_i) \to (f_{\infty}, df_{\infty})$ at $x_{\infty}$ if $f_i \to f_{\infty}$ and $df_i \to df_{\infty}$ at $x_{\infty}$.
\begin{remark}
In \cite{Ho5}, we will see that $df_i$ converges to $df_{\infty}$ on $B_R(m_{\infty})$ in the sense of Definition \ref{plplplp} if and only if $df_i$ $L^p$-converges strongly to $df_{\infty}$ on $B_R(m_{\infty})$ for every $1<p<\infty$.
\end{remark}
We end this subsection by giving three fundamental properties of this convergence which will be used essentially in Section $4$:
\begin{enumerate}
\item If $x_i \to x_{\infty} (x_i \in M_i)$, then $(r_{x_i}, dr_{x_i}) \to (r_{x_{\infty}}, dr_{x_{\infty}})$ on $M_{\infty}$.
\item Let $\{f_i\}_{i \le \infty}$ be a sequence of Lipschitz functions $f_i$ on $B_R(m_i)$ with $\sup_i\mathbf{Lip}f_i<\infty$.
Assume that $f_i$ is a $C^2$-function for every $i < \infty$, $f_i \to f_{\infty}$ on $B_R(m_{\infty})$ and that
\begin{equation}
\sup_{i<\infty}\frac{1}{\mathrm{vol}\,B_R(m_i)}\int_{B_R(m_i)}(\Delta f_i)^2d\mathrm{vol}<\infty.
\end{equation}
Then we have $df_i \to df_{\infty}$ on $B_R(m_{\infty})$.
\item Let $k \in \mathbf{N}$, $\{F_i\}_{1 \le i \le \infty} \subset C^0(\mathbf{R}^k)$ and let $\{f_i^l, g_i^l\}_{1 \le i \le \infty, 1 \le l \le k}$ be a collection of Lipschitz functions $f_i^l, g_i^l$ on $B_R(m_i)$ with $\sup_{l, i} (\mathbf{Lip}\,f_i^l + \mathbf{Lip}\,g_i^l) < \infty$.
Assume that both of the following properties hold:
\begin{enumerate}
\item $F_i$ converges to $F_{\infty}$ with respect to the compact uniform topology on $\mathbf{R}^k$.
\item $df_i^l \rightarrow df_{\infty}^l$ and $dg_i^l \to dg_{\infty}^l$ at a.e. $\alpha \in B_R(m_{\infty})$ for every $1 \le l \le k$.
\end{enumerate}
Then we have 
\begin{equation}
\lim_{i \rightarrow \infty}\frac{1}{\mathrm{vol}\,B_R(m_i)}\int_{B_R(m_{i})}F_i(\langle df_i^1, dg_i^1 \rangle, \ldots, \langle df_i^k, dg_i^k \rangle )d\mathrm{vol}
\end{equation}
\[=\frac{1}{\upsilon (B_R(m_{\infty}))}\int_{B_R(m_{\infty})}F_{\infty}(\langle df_{\infty}^1, dg_{\infty}^1 \rangle, \ldots, \langle df_{\infty}^k, dg_{\infty}^k \rangle )d\upsilon.\]
\end{enumerate}
See \cite[Proposition $4.8$, Corollaries $4.4$ and $4.5$]{Ho} for the proofs.
\section{Weak H$\ddot{\mathrm{o}}$lder continuity and weak Lipschitz continuity}
In this section, we will give several new notions for metric measure spaces and their fundamental properties.
Note that the proofs of these properties are elementary, however, with the theory of convergence of Riemannian manifolds, they perform crucial roles in the analysis of Ricci limit spaces in Section $4$.

We start this section by giving the following definition:
\begin{definition}
Let $A$ be a Borel subset of a metric measure space $(X, \upsilon)$, $Y$ a metric space, $f$ a Borel map from $A$ to $Y$, and $0< \alpha \le 1$.
We say that 
\begin{enumerate}
\item $f$ is \textit{weakly $\alpha$-H$\ddot{o}$lder continuous on $A$} if there exists a countable family $\{A_i\}_i$ of Borel subsets $A_i$ of $A$ such that $\upsilon(A \setminus \bigcup_iA_i)=0$ and that every $f|_{A_i}$ is $\alpha$-H$\ddot{\mathrm{o}}$lder continuous,
\item $f$ is \textit{weakly Lipschitz on $A$} if $f$ is weakly $1$-H$\ddot{\mathrm{o}}$lder continuous on $A$.
\end{enumerate}
\end{definition}
\begin{remark}
Let $M$ be an $n$-dimensional Riemannian manifold, $f$ a function on $M$ and $A$ an open subset of $M$.
Then it is easy to check that the following conditions are equivalent:
\begin{enumerate}
\item $f$ is differentiable at a.e. $x \in A$.
\item $f$ is weakly Lipschitz on $A$.
\end{enumerate}
\end{remark}
\subsection{Weakly twice differentiable functions on a Borel subset of $\mathbf{R}^k$}
Let $A$ be a Borel subset of $\mathbf{R}^k$, $f$ a Lipschitz function on $A$ and $y \in \mathrm{Leb}\,A$, where $\mathrm{Leb}\,A=\{ a \in A; \lim_{r \to 0}H^k(A \cap B_r(a))/H^k(B_r(a))=1\}$.
Then we say that \textit{$f$ is differentiable at $y$} if there exists a Lipschitz function $\hat{f}$ on $\mathbf{R}^k$ such that $\hat{f}|_A=f$ and that $\hat{f}$ is  differentiable at $y$.
Note that if $f$ is differentiable at $y$, then a vector
$(\partial \hat{f}/\partial x_1(y), \ldots, \partial \hat{f}/\partial x_n(y))$
does not depend on the choice of such $\hat{f}$.
Thus we denote the vector by
$J(f)(y)=(\partial f/\partial x_1(y), \ldots, \partial f/\partial x_n(y)).$
Let $F=(f_1, \ldots, f_m)$ be a Lipschitz map from $A$ to $\mathbf{R}^m$.
We say that \textit{$F$ is differentiable at $y$} if every $f_i$ is differentiable at $y$.
Note that by Rademacher's theorem \cite{rad}, $F$ is differentiable at a.e. $x \in A$.
Let us denote the Jacobi matrix of $F$ at $x$ by $J(F)(x)=(\partial f_i/\partial x_j(x))_{ij}$ if $F$ is differentiable at $x$. 
\begin{definition}\label{wert}
Let $\omega=\sum_{i_1< \cdots < i_p} f_{i_1, \ldots, i_p}dx_{i_1} \wedge \cdots \wedge dx_{i_p}$ be a $p$-form on $A$ and $0< \alpha \le 1$.
We say that 
\begin{enumerate}
\item $\omega$ is a \textit{Borel $p$-form on $A$} if every $f_{i_1, \ldots, i_p}$ is a Borel function,
\item $\omega$ is \textit{weakly $\alpha$-H$\ddot{o}$lder continuous on $A$} if every $f_{i_1, \ldots, i_p}$ is weakly $\alpha$-H$\ddot{\mathrm{o}}$lder continuous on $A$,
\item $\omega$ is \textit{weakly Lipschitz on $A$} if every $f_{i_1, \ldots, i_p}$ is weakly Lipschitz on $A$.
\end{enumerate}
\end{definition}
For two Borel $p$-forms $\{\omega_i\}_{i=1,2}$ on $A$, we say that \textit{$\omega_1$ is equivalent to $\omega_2$} if $\omega_1(x)=\omega_2(x)$ for a.e. $x \in A$.
Le us denote the equivalent class of $\omega$ by $[\omega]$, the set of equivalent classes by $\Gamma_{\mathrm{Bor}}(\bigwedge^pT^*A)$, 
the set of equivalent classes represented by a weakly $\alpha$-H$\ddot{\mathrm{o}}$lder continuous $p$-form by $\Gamma_{\alpha}(\bigwedge^pT^*A)$.
We often write $\omega=[\omega]$ for brevity. 

Let $\omega$ be a weakly Lipschitz $p$-form on $A$. Define a Borel $(p+1)$-form $d\omega$ on $A$ by 
$d\omega = \sum_{i_1< \cdots < i_p} (\partial f_{i_1, \ldots, i_p}^j/\partial x_l)dx_l \wedge dx_{i_1} \wedge \cdots \wedge dx_{i_p}$,
where $\omega= \sum_{i_1< \cdots < i_p} f_{i_1, \ldots, i_p}^jdx_{i_1} \wedge \cdots \wedge dx_{i_p}$.
Note that if $\omega_1$ is equivalent to $\omega_2$, then $d\omega_1$ is equivalent to $d\omega_2$.
Therefore $d$ is well-defined as a linear map from $\Gamma_1(\bigwedge^pT^*A)$ to $\Gamma_{\mathrm{Bor}}(\bigwedge^{p+1}T^*A)$.
Note that the following product rule holds:
$d(\eta \wedge \omega) = d\eta \wedge \omega + (-1)^p\eta \wedge d\omega \in \Gamma_{\mathrm{Bor}}(\bigwedge^{p+q}T^*A)$ 
for every $\eta \in \Gamma_1(\bigwedge^pT^*A)$ and every $\omega \in \Gamma_1\left(\bigwedge^qT^*A\right)$.
\begin{lemma}\label{fund}
Let $F$ be a Lipschitz function on $\mathbf{R}^k$.
Then, there exists a sequence $\{F_i\}_i \subset C^{\infty}(\mathbf{R}^k)$ such that 
 $F_i \to F$ in $L^{\infty}(\mathbf{R}^k)$ and that
 $J(F_i)(x) \to J(F)(x)$ for a.e. $x \in \mathbf{R}^k$.
\end{lemma}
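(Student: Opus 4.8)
The plan is to produce the sequence $\{F_i\}$ by mollification. Fix a nonnegative $\rho \in C^{\infty}(\mathbf{R}^k)$ with $\mathrm{supp}\,\rho \subset B_1(0)$ and $\int_{\mathbf{R}^k}\rho\,dH^k=1$, put $\rho_{\epsilon}(x)=\epsilon^{-k}\rho(x/\epsilon)$, and set $F_i=F * \rho_{1/i}$. Since $F$ is continuous (indeed Lipschitz) and $\rho_{1/i}$ is smooth with compact support, each $F_i$ lies in $C^{\infty}(\mathbf{R}^k)$; it then remains only to verify the two convergences.

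For the uniform (i.e. $L^{\infty}$) convergence I would use only that $F$ is Lipschitz. Writing $L=\mathbf{Lip}\,F$, and using that $\rho_{1/i}$ is supported in $B_{1/i}(0)$ and integrates to $1$,
\[
|F_i(x)-F(x)| \le \int_{\mathbf{R}^k}|F(x-y)-F(x)|\,\rho_{1/i}(y)\,dH^k(y) \le L\int_{\mathbf{R}^k}|y|\,\rho_{1/i}(y)\,dH^k(y) \le \frac{L}{i},
\]
uniformly in $x$, so $\|F_i-F\|_{L^{\infty}(\mathbf{R}^k)}\to 0$.

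For the a.e.\ convergence of the gradients, recall first that by Rademacher's theorem $F$ is differentiable $H^k$-a.e., and the a.e.\ defined vector field $J(F)$ is a bounded Borel field (with $|J(F)|\le L$) coinciding with the distributional gradient of $F$; consequently differentiation commutes with the convolution and $J(F_i)=J(F)*\rho_{1/i}$ componentwise. Let $x$ be a Lebesgue point of $J(F)$ — the set of such $x$ has full measure. Using $\rho_{1/i}(y)\le \|\rho\|_{L^{\infty}}\,i^k\,\mathbf{1}_{B_{1/i}(0)}(y)$ and the substitution $z=x-y$,
\[
|J(F_i)(x)-J(F)(x)| \le \|\rho\|_{L^{\infty}}\,i^k\int_{B_{1/i}(0)}|J(F)(x-y)-J(F)(x)|\,dH^k(y) \le C\,\frac{1}{H^k(B_{1/i}(x))}\int_{B_{1/i}(x)}|J(F)-J(F)(x)|\,dH^k,
\]
where $C$ depends only on $\|\rho\|_{L^{\infty}}$ and $k$. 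By the Lebesgue differentiation theorem the right-hand side tends to $0$, whence $J(F_i)(x)\to J(F)(x)$ at every Lebesgue point of $J(F)$, that is, for a.e.\ $x\in\mathbf{R}^k$.

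The only point requiring a little care — and the step I regard as the crux — is the identification used above: that the pointwise derivative $J(F)$ furnished by Rademacher's theorem is genuinely the distributional gradient, so that $J(F_i)=J(F)*\rho_{1/i}$ holds on the nose and the Lebesgue-point argument applies to $J(F)$ itself. Once this is in place, both conclusions follow from the two elementary estimates above; no further input is needed.
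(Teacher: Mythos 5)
Your proof is correct and is essentially the paper's own argument: both mollify $F$, get the uniform convergence from the Lipschitz bound, and reduce the a.e.\ convergence of the gradients to the identity $J(F*\rho_{\epsilon})=J(F)*\rho_{\epsilon}$. The only cosmetic differences are that the paper establishes this identity by hand (difference quotients plus dominated convergence) instead of citing the equality of the pointwise and distributional gradients, and it concludes via Lusin's theorem and approximate continuity where you invoke the Lebesgue differentiation theorem; these steps are interchangeable.
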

\begin{proof}
Let $\rho$ be a nonnegative valued smooth function on $\mathbf{R}^k$ with $\mathrm{supp} (\rho) \subset B_1(0_k)$ and 
\begin{equation}
\int_{\mathbf{R}^k}\rho(x)dH^k=1,
\end{equation}
where $H^k$ is the $k$-dimensional Hausdorff measure.
For every $\epsilon >0$, define smooth functions $\rho_{\epsilon}$ and $F_{\epsilon}$ on $\mathbf{R}^k$ by $\rho_{\epsilon}(x)=\epsilon^{-k} \rho(x/\epsilon)$ and 
\begin{equation}
F_{\epsilon}(x)=\int_{\mathbf{R}^k}\rho_{\epsilon}(x-y)F(y)dH^k.
\end{equation}
Let $L \ge 1$ with $\sup \rho + \mathbf{Lip}F \le L$.
For every $x \in \mathbf{R}^k$, we have 
\begin{align}
\left| F_{\epsilon}(x)-F(x)\right| &\le \int_{\mathbf{R}^k}\rho_{\epsilon}(x-y)|F(y)- F(x)|dH^k\\
&=\int_{B_{\epsilon}(x)}\rho_{\epsilon}(x-y)|F(y)- F(x)|dH^k\\
&\le L \epsilon \int_{B_{\epsilon}(x)}\rho_{\epsilon}(x-y)dH^k\\
&=L \epsilon \int_{B_{\epsilon}(0_k)}\rho_{\epsilon}(y)dH^k\stackrel{\epsilon \to 0}{\to} 0.
\end{align}
Therefore we have the first assertion.
For every $x \in \mathbf{R}^k$ and every $h \in \mathbf{R}$, by the dominated convergence theorem, we have 
\begin{align}
\frac{F_{\epsilon}(x+he_i)-F_{\epsilon}(x)}{h}&=\int_{\mathbf{R}^k}\rho_{\epsilon}(y)\left(\frac{F(x+he_i-y)-F(x-y)}{h}\right)dH^k\\
&=\int_{B_{\epsilon}(0_k)}\rho_{\epsilon}(y)\left(\frac{F(x+he_i-y)-F(x-y)}{h}\right)dH^k\\
&\stackrel{h \to 0}{\to}\int_{B_{\epsilon}(0_k)}\rho_{\epsilon}(y)\frac{\partial F}{\partial x_i}(x-y)dH^k\\
&=\int_{B_{\epsilon}(0_k)}\rho_{\epsilon}(x-y)\frac{\partial F}{\partial x_i}(y)dH^k\\
&=\int_{\mathbf{R}^k}\rho_{\epsilon}(x-y)\frac{\partial F}{\partial x_i}(y)dH^k.
\end{align}
By Lusin's theorem, for every $\delta > 0$ and every $R>0$, there exists a Borel subset $A_{\delta}^R$ of $B_R(0_k)$ such that $H^k(B_R(0_k) \setminus A_{\delta}^R)<\delta$ and that $J(F)|_{A_{\delta}^R}$ is continuous.
Thus, for every $x \in \mathrm{Leb}\,A_{\delta}^R$, we have
\begin{align}
\left| \frac{\partial F_{\epsilon}}{\partial x_i}(x) - \frac{\partial F}{\partial x_i}(x) \right| &\le 
\int_{\mathbf{R}^k}\rho_{\epsilon}(x-y)\left| \frac{\partial F}{\partial x_i}(y) - \frac{\partial F}{\partial x_i}(x) \right| dH^k\\
&=\int_{B_{\epsilon}(x)}\rho_{\epsilon}(x-y)\left| \frac{\partial F}{\partial x_i}(y) - \frac{\partial F}{\partial x_i}(x) \right| dH^k\\
&=\int_{B_{\epsilon}(x) \cap A_{\delta}^R}\rho_{\epsilon}(x-y)\left| \frac{\partial F}{\partial x_i}(y) - \frac{\partial F}{\partial x_i}(x) \right| dH^k\\
& \ \ \ \ +\int_{B_{\epsilon}(x)\setminus A_{\delta}^R}\rho_{\epsilon}(x-y)\left| \frac{\partial F}{\partial x_i}(y) - \frac{\partial F}{\partial x_i}(x) \right| dH^k\\
&\le \sup_{y \in B_{\epsilon}(x) \cap A_{\delta}^R}\left| J(F)(y) - J(F)(x) \right| + 2L \epsilon^{-k}H^k(B_{\epsilon}(x) \setminus A_{\delta}^R)\\
&\stackrel{\epsilon \to 0}{\to} 0.
\end{align}
Since $\delta$ and $R$ are arbitrary, we have the second assertion.
\end{proof}
Let $G=(G_1, \ldots, G_k)$ be a bi-Lipschitz embedding from $A$ to $\mathbf{R}^k$.
For every $\omega  \in \Gamma_{\mathrm{Bor}}(\bigwedge^pT^*G(A))$, define 
$G^*\omega  = \sum f_{i_1, \ldots, i_p} \circ G\, (\partial G_{i_1}/\partial x_{j_1})  \cdots (\partial G_{i_p}/\partial x_{j_p}) dx_{j_1} \wedge \dots \wedge dx_{j_p} \in \Gamma_{\mathrm{Bor}}\left(\bigwedge^pT^*A\right),$
where
$\omega = \sum f_{i_1, \ldots, i_p}dx_{i_1} \wedge \cdots \wedge dx_{i_p}$.
Note that if $J(G)$ is weakly Lipschitz on $A$, then $G^*\omega \in \Gamma_1(\bigwedge^pT^*A)$ for every $\omega \in \Gamma_1(\bigwedge^pT^*G(A))$. 
\begin{proposition}\label{abc}
Let $\omega = \sum f_{i_1, \ldots, i_p}dx_{i_1} \wedge \cdots \wedge dx_{i_p} \in \Gamma_{1}(\bigwedge^pT^*G(A))$.
Assume that $J(G)$ is weakly Lipschitz on $A$.
Then we have $d ( G^*\omega)=G^*(d\omega) \in \Gamma_{\mathrm{Bor}}(\bigwedge^{p+1}T^*A)$.
\end{proposition}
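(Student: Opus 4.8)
The plan is to reproduce the classical naturality identity $d\circ G^{*}=G^{*}\circ d$, organising the argument so that it rests on two purely pointwise-a.e.\ ingredients which are then combined through the product rule recorded before the statement and the elementary multiplicativity $G^{*}(\eta\wedge\zeta)=G^{*}\eta\wedge G^{*}\zeta$ (immediate from the definition of $G^{*}$). First I would rewrite the pullback intrinsically as $G^{*}\omega=\sum_{I}(f_{I}\circ G)\,dG_{i_{1}}\wedge\cdots\wedge dG_{i_{p}}$, where $I=(i_{1},\ldots,i_{p})$ and $dG_{j}=G^{*}(dx_{j})=\sum_{a}(\partial G_{j}/\partial x_{a})\,dx_{a}$. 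Since $J(G)$ is weakly Lipschitz on $A$, each $dG_{j}$ belongs to $\Gamma_{1}(\bigwedge^{1}T^{*}A)$ and each coefficient $f_{I}\circ G$ is weakly Lipschitz, so $G^{*}\omega\in\Gamma_{1}(\bigwedge^{p}T^{*}A)$ and $d(G^{*}\omega)$ is defined. Expanding by the product rule gives
\[
d(G^{*}\omega)=\sum_{I}d(f_{I}\circ G)\wedge dG_{i_{1}}\wedge\cdots\wedge dG_{i_{p}}+\sum_{I}(f_{I}\circ G)\sum_{t=1}^{p}(-1)^{t-1}\,dG_{i_{1}}\wedge\cdots\wedge d(dG_{i_{t}})\wedge\cdots\wedge dG_{i_{p}},
\]
so the statement reduces to two claims, each to be verified at $H^{k}$-a.e.\ point of $A$.

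The first claim is the a.e.\ chain rule $d(f_{I}\circ G)=G^{*}(df_{I})$. I would prove it pointwise: at a.e.\ $x\in A$ the map $G$ is differentiable (its Jacobi matrix $J(G)(x)$ exists a.e.\ by Rademacher's theorem), and since $G$ is bi-Lipschitz it carries $H^{k}$-null sets to $H^{k}$-null sets, so the preimage under $G$ of the null set where $f_{I}$ fails to be differentiable is again null; hence $f_{I}$ is differentiable at $G(x)$ for a.e.\ $x$. The classical chain rule for differentiability at a point then yields $J(f_{I}\circ G)(x)=J(f_{I})(G(x))\,J(G)(x)$, which is exactly $d(f_{I}\circ G)(x)=G^{*}(df_{I})(x)$. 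Granting the second claim, namely $d(dG_{i_{t}})=0$ a.e., the second sum above vanishes, and using the chain rule together with $dG_{i_{s}}=G^{*}(dx_{i_{s}})$ and multiplicativity the first sum becomes $\sum_{I}G^{*}\big(df_{I}\wedge dx_{i_{1}}\wedge\cdots\wedge dx_{i_{p}}\big)=G^{*}(d\omega)$, completing the argument.

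The heart of the proof, and the step I expect to be the main obstacle, is the second claim $d(dG_{i})=0$ a.e.\ for each component $G_{i}$. By the definition of $d$ this is precisely the a.e.\ symmetry $\partial(\partial G_{i}/\partial x_{l})/\partial x_{s}=\partial(\partial G_{i}/\partial x_{s})/\partial x_{l}$ of the mixed second derivatives, where the first derivatives $\partial G_{i}/\partial x_{a}$ are weakly Lipschitz (and hence differentiable a.e.) by hypothesis. The symmetry is automatic when $G_{i}$ is $C^{2}$, so the natural route is to extend $G_{i}$ to a Lipschitz function on $\mathbf{R}^{k}$, mollify to a smooth $G_{i,\epsilon}$, and exploit that $\partial^{2}G_{i,\epsilon}/\partial x_{s}\partial x_{l}=\partial^{2}G_{i,\epsilon}/\partial x_{l}\partial x_{s}$ exactly, passing to the limit $\epsilon\to 0$. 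The difficulty is that this is a \emph{second}-order passage to the limit, whereas Lemma \ref{fund} only supplies first-order a.e.\ convergence $J(G_{i,\epsilon})\to J(G_{i})$, and the mollification is nonlocal while the weakly Lipschitz structure is available only piecewise; a naive estimate loses a factor $\epsilon^{-1}$ coming from differentiating the kernel. I would control this by a Lusin/Egorov argument: on a set of density one each $\partial G_{i}/\partial x_{a}$ admits a first-order Taylor expansion whose remainder is uniformly $o(|h|)$, and this uniformity is exactly what is needed to show that the mixed second derivatives of the mollifications converge a.e.\ to the intrinsic ones, so that the exact symmetry for each $G_{i,\epsilon}$ persists in the limit. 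Making this uniform-differentiability estimate precise, so as to absorb the contribution of the mollifier's support lying outside the good piece, is the technically delicate point on which the whole proposition turns.
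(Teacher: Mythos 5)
Your reduction is clean as far as it goes: the rewriting $G^{*}\omega=\sum_{I}(f_{I}\circ G)\,dG_{i_{1}}\wedge\cdots\wedge dG_{i_{p}}$, the iterated product rule, and the a.e.\ chain rule $d(f_{I}\circ G)=G^{*}(df_{I})$ (Rademacher plus the fact that the Lipschitz map $G^{-1}$ sends null sets to null sets, so $f_{I}$ is differentiable at $G(x)$ for a.e.\ $x\in A$) are all correct. But the argument is not a proof, because the step you yourself call ``the heart of the proof'' --- $d(dG_{i})=0$ a.e., i.e.\ the a.e.\ symmetry of the mixed approximate second partials of $G_{i}$ --- is never established; you sketch a mollification-plus-Lusin strategy and then state that making it precise is the delicate point on which everything turns. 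Observe that for $\omega=dx_{i}$ the proposition \emph{is} the statement $d(dG_{i})=0$, so your decomposition has concentrated the entire content of the result into the one claim you do not prove. Worse, that claim is exactly Proposition~\ref{2nd} applied to the components of $G$ (which are weakly twice differentiable because $J(G)$ is weakly Lipschitz), and the paper flags Proposition~\ref{2nd} as ``not trivial'' and proves it \emph{after} and \emph{by means of} Proposition~\ref{abc}, via the bi-Lipschitz coordinate changes $\Phi_{i}^{m}=(f,x_{1},\ldots,x_{i-1},x_{i+1},\ldots,x_{k})$ supplied by \cite[Theorem $3.4$]{Ho} together with $(\Phi_{i}^{m})^{*}dx_{1}=df$. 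So you cannot close the gap by invoking Proposition~\ref{2nd} without circularity, and the direct mollification attack you outline runs into exactly the obstruction you name: $J(G)$ is Lipschitz only piecewise on a Borel set, the convolution is nonlocal, and differentiating the kernel costs a factor $\epsilon^{-1}$ that the first-order convergence of Lemma~\ref{fund} cannot absorb. A genuine repair would require something like a Whitney $C^{1,1}$-extension argument on density points of each piece on which $J(G)$ is Lipschitz; your sketch gestures at this but does not supply it.

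The paper's route is different and never isolates $d(dG_{i})=0$: it approximates $G$ and all the coefficients $f_{I}$ \emph{simultaneously} by smooth objects via Lemma~\ref{fund}, uses that the naturality identity $d((G^{j})^{*}\omega^{j})=(G^{j})^{*}(d\omega^{j})$ holds exactly for each $j$ --- so the second-derivative terms cancel exactly at every stage, before any limit is taken, and the surviving expression involves only first derivatives, which do converge a.e.\ --- and then lets $j\to\infty$. Whether that terse limit passage fully identifies $\lim_{j}d((G^{j})^{*}\omega^{j})$ with $d(G^{*}\omega)$, rather than merely with $G^{*}(d\omega)$, is itself delicate for essentially the reason you point out, so you have correctly located where the real mathematical content of the proposition sits. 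But locating the difficulty is not resolving it, and as written your proposal leaves the decisive step unproved.
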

\begin{proof}
Without loss of generality, we can assume that $G$ and every $f_{i_1, \ldots, i_p}$ are Lipschitz on $A$. 
By Lemma \ref{fund}, there exist sequences of smooth maps $\{G^j\}_j$ from $\mathbf{R}^k$ to $\mathbf{R}^k$, and of smooth functions $\{f_{i_1, \ldots, i_p}^j\}_j$ on $\mathbf{R}^k$ such that the following properties hold:
\begin{enumerate}
\item $G^j \to G$ and $f_{i_1, \ldots, i_p}^j \to f_{i_1, \ldots, i_p}$ in $L^{\infty}(A)$.
\item $J(G^j)(x) \to J(G)(x)$ and $J(f_{i_1, \ldots, i_p}^j)(x) \to J(f_{i_1, \ldots, i_p})(x)$ for a.e. $x \in A$.
\end{enumerate}
Since
\begin{equation}
d \left((G^j)^*\left(  \sum f_{i_1, \ldots, i_p}^jdx_{i_1} \wedge \cdots \wedge dx_{i_p} \right)\right)(x)=(G^j)^*\left(d\left( \sum f_{i_1, \ldots, i_p}^jdx_{i_1} \wedge \cdots \wedge dx_{i_p}\right) \right)(x)
\end{equation}
for every $x \in \mathbf{R}^k$ and every $j$, by letting $j \to \infty$, this completes the proof.
\end{proof}
Note that in the same way as Definition \ref{wert}, we can give definitions of Borel vector (tensor) field on $A$, of its equivalence, of its weak $\alpha$-H$\ddot{\mathrm{o}}$lder continuity, and so on.  
Denote the set of equivalent classes of Borel vector fields by $\Gamma_{\mathrm{Bor}}(TA)$ and the set of equivalent classes represented by a weakly Lipschitz vector field by $\Gamma_{1}(TA)$. 

For every weakly Lipschitz function $f$ on $A$ and every $X \in \Gamma_{\mathrm{Bor}}(TA)$, define a Borel function $X(f)=\sum X_i \partial f/ \partial x_i$ on $A$, where $X=\sum X_i\partial / \partial x_i$.
For every $X \in \Gamma_{\mathrm{Bor}}(TA)$, define $G_*X =\sum X(G_i)\partial /\partial x_i \in \Gamma_{\mathrm{Bor}}(TG(A))$.
For every $X, Y \in \Gamma_{1}(TA)$, define 
 $[X, Y] \in \Gamma_{\mathrm{Bor}} (TA)$ by
\begin{equation}
[X, Y]=\sum_{i, j} \left(X_j\frac{\partial Y_i}{\partial x_j}-Y_j\frac{\partial X_i}{\partial x_j}\right)\frac{\partial}{\partial x_j},
\end{equation}
where $X=\sum X_i\partial /\partial x_i$, $Y=\sum Y_i\partial /\partial x_i$.
\begin{proposition}\label{def}
Let $X, Y \in \Gamma_{1}(TA)$.
Assume that $J(G)$ is weakly Lipschitz on $A$.
Then we have $[G_*X, G_*Y]=G_*[X, Y] \in \Gamma_{\mathrm{Bor}}(TG(A))$. 
\end{proposition}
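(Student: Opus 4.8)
The plan is to follow the pattern of Proposition \ref{abc}: reduce the assertion to a statement about $G$ alone, and dispatch that by the smooth approximation of Lemma \ref{fund}. Since the claim is an equality of Borel vector fields modulo $\upsilon$-null sets and $X$, $Y$, $J(G)$ are all weakly Lipschitz, I first refine $A$ into a countable Borel decomposition on each piece of which $G$, the entries $\partial_bG_i$ of $J(G)$, and the coefficients $X_a$, $Y_a$ are simultaneously Lipschitz; it then suffices to prove the identity a.e.\ on a single such piece $A'$. On $A'$ the coefficients $X(G_i)=\sum_aX_a\partial_aG_i$ of $G_*X$, and the analogous coefficients of $G_*Y$, are finite sums of products of weakly Lipschitz functions, hence weakly Lipschitz; thus $G_*X,G_*Y\in\Gamma_1(TG(A))$ and every bracket in the statement is well-defined. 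This is the first point at which the hypothesis that $J(G)$ be weakly Lipschitz is used.

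Next I would isolate the algebraic content. Using the pointwise relation $J(G)^{-1}J(G)=I$ (valid a.e.\ on $A'$, since $G$ is a bi-Lipschitz embedding and so $J(G)$ is invertible a.e.), a direct chain-rule and product-rule computation of the bracket on $G(A)$ yields, a.e.\ on $A'$,
\begin{equation}
[G_*X,G_*Y]_i=(G_*[X,Y])_i+\sum_{b,d}(X_dY_b-Y_dX_b)\,\partial_d\partial_bG_i,
\end{equation}
where $\partial_d\partial_bG_i$ is the a.e.-defined derivative of $\partial_bG_i$, which is Lipschitz on $A'$. The first term on the right is exactly $G_*[X,Y]$, so the whole proposition reduces to showing that the second term vanishes a.e.; being an antisymmetric contraction of the Hessian of $G$, this is equivalent to the a.e.\ symmetry of the mixed second partials of each $G_i$, i.e.\ to the closedness of $dG_i$.

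The symmetry of these mixed partials is the heart of the matter, and it is exactly here, rather than only formally, that the weak Lipschitz continuity of $J(G)$ is indispensable. I would prove it by smooth approximation in the spirit of Lemma \ref{fund}. After a Lusin-type refinement of $A'$ and a Whitney-type extension, I may assume that each $G_i$ is the restriction to $A'$ of a $C^{1,1}$ function on $\mathbf{R}^k$, that is, of a function whose gradient is globally Lipschitz. Mollifying this extension as in the proof of Lemma \ref{fund} produces smooth $G_i^{\,j}$ with $G_i^{\,j}\to G_i$; since mollification commutes with differentiation and Lemma \ref{fund} applies to each globally Lipschitz component $\partial_bG_i$ of the gradient, one also gets $\partial_d\partial_bG_i^{\,j}\to\partial_d\partial_bG_i$ a.e.\ on $A'$. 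For the smooth $G_i^{\,j}$ the mixed partials are symmetric by Schwarz's theorem, so $\sum_{b,d}(X_dY_b-Y_dX_b)\partial_d\partial_bG_i^{\,j}\equiv0$; letting $j\to\infty$ and keeping the fixed bounded weights $X_dY_b-Y_dX_b$, the limiting contraction $\sum_{b,d}(X_dY_b-Y_dX_b)\partial_d\partial_bG_i$ vanishes a.e.\ on $A'$. Substituting into the displayed formula gives $[G_*X,G_*Y]=G_*[X,Y]$ a.e.\ on $A'$, and a union over the countable decomposition finishes the proof. The main obstacle is precisely this second-order step: unlike in Proposition \ref{abc}, where the antisymmetrization of the wedge product makes the second-order terms of $G$ disappear automatically, here their cancellation rests on the a.e.\ symmetry of the Hessian of $G$, a statement that is meaningful, and correct, only because $J(G)$ is assumed weakly Lipschitz.
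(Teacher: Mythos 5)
Your reduction is correct and usefully more explicit than the paper's one\nobreakdash-line proof (``an argument similar to that of Proposition \ref{abc}''): the chain rule through $G^{-1}$, together with $J(G^{-1})=J(G)^{-1}\circ G^{-1}$ a.e., does yield
\begin{equation}
[G_*X,G_*Y]_i=(G_*[X,Y])_i+\sum_{b,d}(X_dY_b-Y_dX_b)\,\partial_d\partial_bG_i ,
\end{equation}
and you are right that, in contrast to Proposition \ref{abc}, a genuine second-order term survives whose vanishing is equivalent to the a.e.\ symmetry of the mixed partials $\partial_d\partial_bG_i$ on $A$. The gap is in how you establish that symmetry. You invoke a $C^{1,1}$ Whitney-type extension of each $G_i$ from $A'$, but the Whitney extension theorem of class $C^{1,1}$ requires the second-order compatibility condition $|G_i(y)-G_i(x)-\nabla G_i(x)\cdot(y-x)|\le C|x-y|^2$ for $x,y\in A'$, and this does \emph{not} follow from $G_i$ and $\nabla G_i$ being Lipschitz on a general Borel set $A'$: with no convexity available one cannot integrate $\nabla G_i$ along segments, and one can arrange (already for $k=1$, on a fat Cantor set with jumps of size $|I|^{3/2}$ across gaps $I$) that the restricted gradient is Lipschitz while the quadratic Taylor bound fails between density points. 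A mere $C^1$ extension does not rescue the argument either, since Lemma \ref{fund} needs the \emph{gradient} of the extension to be globally Lipschitz to get a.e.\ convergence of the mollified second derivatives, and separate Lipschitz extensions of $\partial_bG_i$ and $\partial_dG_i$ are not gradients of a common potential, so Schwarz's theorem gives nothing. This is exactly the difficulty the paper flags by calling Proposition \ref{2nd} ``not trivial.''

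The statement you need is precisely Proposition \ref{2nd} applied to each $G_i$ (which is weakly twice differentiable since $J(G)$ is weakly Lipschitz), and its proof in the paper avoids extension theorems altogether: one decomposes $A$ into pieces on which $\Phi=(G_i,x_1,\dots,\widehat{x_b},\dots,x_k)$ is a bi-Lipschitz embedding with weakly Lipschitz Jacobian, writes $dG_i=\Phi^*(dx_1)$, and concludes $d(dG_i)=\Phi^*(d(dx_1))=0$ from Proposition \ref{abc}. Since that argument uses only Proposition \ref{abc} and \cite[Theorem 3.4]{Ho}, substituting it for your Whitney step introduces no circularity even though Proposition \ref{2nd} is stated after Proposition \ref{def} in the text. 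With that replacement your proof is complete and, in my view, a more transparent account of where the hypothesis on $J(G)$ enters than the paper's own.
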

\begin{proof}
The proposition follows from an argument similar to that of the proof of Proposition \ref{abc}.
\end{proof}
\begin{definition}[Weakly twice differentiable function]
Let $f$ be a Borel function on $A$. 
We say that \textit{$f$ is weakly twice differentiable on $A$} if $f$ is weakly Lipschitz function on $A$ and $df \in \Gamma_1(T^*A) (=\Gamma_1(\bigwedge^1T^*A))$.
\end{definition}
Note that the following is \textit{not} trivial.
\begin{proposition}\label{2nd}
Let $f$ be a weakly twice differentiable function on $A$.
Then we have $d(df)=0 \in \Gamma_{\mathrm{Bor}}(\bigwedge^2T^*A)$.
\end{proposition}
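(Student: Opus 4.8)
The plan is to recognize $df$ as a pullback and then invoke the naturality of the exterior derivative established in Proposition \ref{abc}. I would regard $f$ itself as a Lipschitz map $G := f$ from $A$ into $\mathbf{R}$, write $t$ for the linear coordinate on $\mathbf{R}$, and let $dt$ be the associated $1$-form. Its single coefficient is the constant function $1$, so $dt \in \Gamma_1(\bigwedge^1 T^*f(A))$ and $d(dt)=0$. By the very definition of the pullback, $G^*(dt) = \sum_j (\partial f/\partial x_j)\,dx_j = df$.

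Next I would check that Proposition \ref{abc} applies. The hypothesis that $f$ is weakly twice differentiable means exactly that $df \in \Gamma_1(T^*A)$, i.e. that every $\partial f/\partial x_j$ is weakly Lipschitz on $A$; equivalently, $J(G)=J(f)=(\partial f/\partial x_1,\ldots,\partial f/\partial x_k)$ is weakly Lipschitz on $A$, which is precisely the standing assumption of Proposition \ref{abc}. Granting this, Proposition \ref{abc} yields $d(df) = d(G^*(dt)) = G^*(d(dt)) = G^*(0) = 0 \in \Gamma_{\mathrm{Bor}}(\bigwedge^2 T^*A)$, which is the assertion.

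The one point that needs care --- and the reason the statement is not a tautology --- is the scope of Proposition \ref{abc}: there $G$ is taken to be a bi-Lipschitz embedding of $A$ into $\mathbf{R}^k$, whereas my $G=f$ maps into the line $\mathbf{R}$ and is in general neither injective nor dimension-preserving. I expect this to be harmless, because the proof of Proposition \ref{abc} never uses injectivity or equality of dimensions: it only approximates $G$ and the coefficients of $\omega$ by smooth objects through Lemma \ref{fund}, applies the classical identity $d \circ (\cdot)^* = (\cdot)^* \circ d$ valid for smooth maps, and passes to the almost-everywhere limit. Re-running that one-step argument with target $\mathbf{R}$ in place of $\mathbf{R}^k$ (or, to stay literally within the stated hypotheses, with the padded map $\tilde{G}=(f,0,\ldots,0):A \to \mathbf{R}^k$ and $\omega=dt_1$) gives the claim. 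Thus the genuine content is not a pointwise symmetry-of-second-derivatives computation but the almost-everywhere form of $d\circ d = 0$, which is already packaged into the smoothing of Lemma \ref{fund} through Proposition \ref{abc}; the main thing I would want to confirm in writing is precisely this dimension-general version of that proposition.
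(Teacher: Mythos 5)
Your instinct to reduce the statement to Proposition \ref{abc} applied to a constant-coefficient $1$-form is exactly the paper's strategy, but the way you invoke that proposition leaves a genuine gap --- and it is precisely the point you flag yourself at the end and then declare ``harmless.'' Proposition \ref{abc} is stated and proved for a \emph{bi-Lipschitz embedding} $G$ of $A$ into $\mathbf{R}^k$, and this hypothesis is not idle. First, the proof must transfer ``almost everywhere'' information on $G(A)$ back to ``almost everywhere'' information on $A$ (the coefficients of $\omega$ and their mollifications are only controlled $H^k$-a.e.\ on $G(A)$); for a non-injective, dimension-collapsing map such as $G=f:A\to\mathbf{R}$ or the padded map $(f,0,\ldots,0)$, whose image is a null subset of $\mathbf{R}^k$ when $k\ge 2$, preimages of null sets can have positive measure, so nothing transfers. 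You sidestep this by observing that the coefficient of $dt$ is the constant $1$ --- true --- but then the entire mollification proof of Proposition \ref{abc} degenerates in your setting to: ``$d(df_{\epsilon})=0$ for the mollifications $f_{\epsilon}$ of $f$; let $\epsilon\to 0$.'' Lemma \ref{fund} controls only the \emph{first} derivatives of $f_{\epsilon}$; it gives no control whatsoever on $\partial^2 f_{\epsilon}/\partial x_l\partial x_j$, and the quantity $d(df)$ in the conclusion is computed from the approximate derivatives of the merely weakly Lipschitz form $df$ (i.e.\ derivatives of Lipschitz extensions from pieces), which need not be reached by any limit of the mollified second derivatives. The interchange $\lim_{\epsilon}d(df_{\epsilon})=d(df)$ is therefore exactly the non-trivial content of the proposition --- the remark preceding it in the paper warns of this --- and your shortcut is circular at that step. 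Contrary to your closing sentence, the a.e.\ form of $d\circ d=0$ is \emph{not} ``already packaged into the smoothing of Lemma \ref{fund}.''

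What the paper actually does is engineered to stay strictly within the hypotheses of Proposition \ref{abc}. It first splits off $\hat{A}=\{x\in\mathrm{Leb}\,A;\,\partial f/\partial x_1(x)=\cdots=\partial f/\partial x_k(x)=0\}$, where $d(df)=0$ is immediate since $d(f|_{\hat{A}})=0$. On the complement it invokes \cite[Theorem $3.4$]{Ho} to produce countably many Borel pieces $A_i^m$, covering $A\setminus\hat{A}$ up to measure zero, on which $\Phi_i^m=(f,x_1,\ldots,x_{i-1},x_{i+1},\ldots,x_k)$ is a genuine bi-Lipschitz embedding into $\mathbf{R}^k$; the weak twice differentiability of $f$ guarantees that $J(\Phi_i^m)$ and $J((\Phi_i^m)^{-1})$ are weakly Lipschitz, and since $(\Phi_i^m)^*dx_1=df$ and $d(dx_1)=0$, Proposition \ref{abc} applies \emph{as stated}. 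If you wish to keep your one-line reduction, you must first state and prove the dimension-general version of Proposition \ref{abc} that you appeal to, and any honest proof of it would have to contain something like this decomposition into equidimensional bi-Lipschitz charts.
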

\begin{proof}
Let $\hat{A}=\{x \in \mathrm{Leb}\,A; \partial f /\partial x_{1}(x)=\cdots =\partial f /\partial x_{k}(x)=0\}$.
Note that $d(d(f|_{\hat{A}}))=0 \in \Gamma_{\mathrm{Bor}}(\bigwedge^2T^*\hat{A})$ because $d(f|_{\hat{A}})=0 \in \Gamma_{1}(T^*\hat{A})$.
Let $A_i=\{x \in \mathrm{Leb}\,A \setminus \hat{A}; \{df(x)\} \cup \{dx_j(x)\}_{j \neq i}$ is a base of $T_x^*\mathbf{R}^k.\}$.
By \cite[Theorem $3.4$]{Ho}, there exists a countable collection $\{A_{i}^m\}_{1 \le i \le k, m \in \mathbf{N}}$ of Borel subsets $A_{i}^m$ of $A_i$ such that 
$H^k\left((A \setminus \hat{A}) \setminus \bigcup_{i,m}A_i^m\right)=0$
and that every map $\Phi_i^m = (f, x_1, \ldots, x_{i-1}, x_{i+1}, \ldots, x_k)$ is a bi-Lipschitz embedding from $A^m_i$ to $\mathbf{R}^k$.
By the assumption, we see that every $\langle df, dx_j \rangle$ is weakly Lipschitz on $A$.
Therefore,
$J(\Phi_i^m)$, $J((\Phi_i^m)^{-1})$ are weakly Lipschitz on $A_i^m$, $\Phi_i^m(A_i^m)$, respectively.
Since $(\Phi_i^m)^*dx_1 = df$ and $d(dx_1)=0$, the proposition follows directly from Proposition \ref{abc}.
\end{proof}
Let $f$ be a weakly twice differentiable function on $A$.
Put 
\begin{equation}
\frac{\partial ^2 f}{\partial x_i \partial x_j}(x)=\frac{\partial}{\partial x_i}\left(\frac{\partial f}{\partial x_j}\right)(x).
\end{equation}
Note that Proposition \ref{2nd} implies that for every $i, j$ we have
\begin{equation}
\frac{\partial ^2 f}{\partial x_i \partial x_j}(x)=\frac{\partial ^2 f}{\partial x_j \partial x_i}(x)
\end{equation}
for a.e. $x \in A$.

For  $\omega = \sum f_{i_1, \ldots, i_p}dx_{i_1} \wedge \cdots \wedge dx_{i_p} \in \Gamma_1(\bigwedge^pT^*A)$, we say that \textit{$\omega$ is weakly twice differentiable on $A$} if every $f_{i_1, \ldots, i_p}$ is weakly twice differentiable on $A$.
Similarly, we can give definitions of weak twice differentiability for vector (tensor) fields on $A$, for maps from $A$ to $\mathbf{R}^m$, and so on.
\begin{corollary}\label{asdfghjkl}
Let $\omega$ be a weakly twice differentiable $p$-form on $A$.
Then we have $d(d\omega)=0 \in \Gamma_{\mathrm{Bor}}(\bigwedge^{p+2}T^*A)$.
\end{corollary}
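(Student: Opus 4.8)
The plan is to reduce the statement to the function case already settled in Proposition \ref{2nd}, using only the linearity of $d$ and the product rule $d(\eta \wedge \omega) = d\eta \wedge \omega + (-1)^p \eta \wedge d\omega$ recorded in the preliminaries of this subsection. The point is that all the genuine analytic content—the a.e. commutation of the weak second partial derivatives of a single weakly twice differentiable function—is already isolated in Proposition \ref{2nd}, so what remains is exterior-algebra bookkeeping.

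First I would check that $d(d\omega)$ is even well-defined as an element of $\Gamma_{\mathrm{Bor}}(\bigwedge^{p+2}T^*A)$. Writing $\omega = \sum_{i_1 < \cdots < i_p} f_{i_1, \ldots, i_p}\, dx_{i_1} \wedge \cdots \wedge dx_{i_p}$, weak twice differentiability of $\omega$ means each coefficient $f_{i_1, \ldots, i_p}$ is weakly twice differentiable, i.e. $df_{i_1, \ldots, i_p} \in \Gamma_1(T^*A)$. Hence the coefficients $\partial f_{i_1, \ldots, i_p}/\partial x_l$ of $d\omega$ are weakly Lipschitz, so $d\omega \in \Gamma_1(\bigwedge^{p+1}T^*A)$ and the operator $d$ legitimately applies to it. This step is pure unwinding of the definitions.

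Next, by linearity of $d$ it suffices to treat a single monomial $\omega = f\, dx_I$, where $dx_I = dx_{i_1} \wedge \cdots \wedge dx_{i_p}$ and $f$ is weakly twice differentiable. Since $dx_I$ has constant coefficients we have $d(dx_I) = 0$, so the product rule (with the $0$-form $f$) gives $d\omega = df \wedge dx_I$. Applying the product rule once more, now with the $1$-form $\eta = df \in \Gamma_1(\bigwedge^1 T^*A)$, yields $d(d\omega) = d(df) \wedge dx_I - df \wedge d(dx_I) = d(df) \wedge dx_I$. By Proposition \ref{2nd} we have $d(df) = 0 \in \Gamma_{\mathrm{Bor}}(\bigwedge^2 T^*A)$, so each monomial contributes zero, and summing over the finitely many multi-indices $I$ gives $d(d\omega) = 0$.

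I do not expect a serious obstacle here; the only mild care needed is organizational—confirming that the product rule and the identity $d(dx_I) = 0$ are genuinely available in the weakly Lipschitz $\Gamma_1$ setting, and that the identities involved, being a.e. statements about equivalence classes, are preserved under the finite sum over multi-indices. Should one prefer a self-contained route that bypasses the product rule, an equivalent argument computes $d(d\omega)$ coefficientwise as $\sum_I \sum_{l,m} (\partial^2 f_I/\partial x_m \partial x_l)\, dx_m \wedge dx_l \wedge dx_I$ and cancels in pairs, invoking the a.e. symmetry $\partial^2 f_I/\partial x_m \partial x_l = \partial^2 f_I/\partial x_l \partial x_m$ from Proposition \ref{2nd} together with $dx_m \wedge dx_l = -\, dx_l \wedge dx_m$.
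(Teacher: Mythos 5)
Your proof is correct and follows essentially the same route as the paper, which likewise writes $d(d\omega)=\sum d(df_{i_1,\ldots,i_p})\wedge dx_{i_1}\wedge\cdots\wedge dx_{i_p}$ and invokes Proposition \ref{2nd}. Your extra care about well-definedness of $d(d\omega)$ and the alternative coefficientwise cancellation are fine but not needed beyond what the paper records.
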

\begin{proof}
Since
\begin{equation}
d(d\omega)=\sum d(df_{i_1, \ldots, i_p})\wedge dx_{i_1}\wedge \cdots \wedge dx_{i_p},
\end{equation}
the corollary follows directly from Proposition \ref{2nd}.
\end{proof}
\subsection{Riemannian metric on a Borel subset of $\mathbf{R}^k$}
Let $A$ be a Borel subset of $\mathbf{R}^k$.
In this subsection, we will study \textit{Riemannian metrics on $A$} in the following sense:
\begin{definition}[Riemannian metric]
Let $g=\{g_a\}_{a \in A}$ be a family of inner products $g_a$ on $T_a\mathbf{R}^k$.
We say that 
\begin{enumerate}
\item \textit{$g$ is a Borel Riemannnian metric on $A$} if every $g_{ij}(a)=g_a(\partial/ \partial x_i, \partial/\partial x_j)$ is a Borel function on $A$,   
\item \textit{$g$ is a weakly $\alpha$-H$\ddot{o}$lder continuous Riemannnian metric on $A$} if every $g_{ij}$ is a weakly $\alpha$-H$\ddot{\mathrm{o}}$lder continuous function on $A$,
\item \textit{$g$ is a weakly Lipschitz Riemannnian metric on $A$} if every $g_{ij}$ is a  weakly Lipschitz function on $A$.
\end{enumerate}
\end{definition}
Note that for a Borel Riemannian metric $g$ on $A$, $g$ is weakly $\alpha$-H$\ddot{\mathrm{o}}$lder continuous if and only if $g \in \Gamma_{\alpha}(T^*A \otimes T^*A)$.
For two Borel Riemannian metrics $g, \hat{g}$ on $A$, we say that \textit{$g$ is equivalent to $\hat{g}$} if $g_{ij}$ is equivalent to $\hat{g}_{ij}$. 
Let us denote by $\mathrm{Riem}_{\mathrm{Bor}}(A) (\subset \Gamma_{\mathrm{Bor}}(T^*A \otimes T^*A))$ the set of equivalent classes of Borel Riemannian metrics and  by $\mathrm{Riem}_{\alpha}(A)$ the set of equivalent classes represented by a weakly $\alpha$-H$\ddot{\mathrm{o}}$lder continuous Riemannian metric.

For a weakly Lipschitz function $f$ on $A$, $g \in \mathrm{Riem}_{\mathrm{Bor}}(A)$, $X=\sum X_i \partial /\partial x_i \in \Gamma_{\mathrm{Bor}}(TA)$ and $\omega =\sum \omega_idx_i \in \Gamma_{\mathrm{Bor}}(T^*A)$, define $X^* \in \Gamma_{\mathrm{Bor}}(T^*A)$, $\omega^* \in \Gamma_{\mathrm{Bor}}(TA)$ and $\nabla^gf \in \Gamma_{\mathrm{Bor}}(TA)$ by 
$X^*=\sum g_{ij}X_idx_j, \,\,\omega^*=\sum g^{ij}\omega_i \partial /\partial x_j$
and $\nabla^gf=(df)^*$, respectively, where $g^{ij}$ is the $ij^{th}$ term of the inverse of the matrix defined by $g_{ij}$.
\begin{proposition}[Levi-Civita connection]\label{levi}
Let $g\in \mathrm{Riem}_{1}(A)$.
Then there exists the Levi-Civita connection $\nabla^{g}$ on $A$ defined uniquely in the following sense:
\begin{enumerate}
\item $\nabla^g$ is a map from $\Gamma_{\mathrm{Bor}} (TA) \times \Gamma_{1} (TA)$ to $\Gamma_{\mathrm{Bor}} (TA)$ ($\nabla^g_XY:=\nabla^g(X, Y)$).
\item $\nabla _{X}^{g}(Y+Z)=\nabla _{X}^{g}Y + \nabla _{X}^{g} Z$ for every $X \in \Gamma_{\mathrm{Bor}}(TA)$ and every $Y, Z \in \Gamma_{1}(TA)$.
\item $\nabla _{fX+hY}^{g}Z =f\nabla _{X}^{g}Z + h\nabla _{Y}^{g}Z$ for every $X, Y \in \Gamma_{\mathrm{Bor}}(TA)$, every $Z \in \Gamma_{1}(TA)$ and every Borel functions $f, h$ on $A$.
\item $\nabla _{X}^{g}(fY)=X(f)Y + f\nabla _{X}^{g}Y$ for every $X \in \Gamma_{\mathrm{Bor}}(TA)$, every $Y \in \Gamma_{1}(TA)$ and every weakly Lipschitz function $f$ on $A$.
\item $\nabla _{X}^{g}Y - \nabla _{Y}^{g}X=[X, Y]$ for every $X, Y \in \Gamma_{1}(TA)$.
\item $Xg(Y, Z) = g(\nabla _{X}^{g}Y, Z) + g(Y, \nabla _{X}^{g}Z)$ for every $X \in \Gamma_{\mathrm{Bor}}(TA)$ and every $Y, Z \in \Gamma_{1} (TA)$.   
\end{enumerate}
\end{proposition}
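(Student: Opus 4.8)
The plan is to mimic the classical proof of the fundamental theorem of Riemannian geometry, deducing uniqueness from the Koszul formula and obtaining existence from the Christoffel symbols, while keeping careful track of the weak regularity at every step. For uniqueness, I would suppose $\nabla^g$ satisfies (1)--(6), write the compatibility identity (6) for the three cyclic permutations of $(X, Y, Z)$, add the first two and subtract the third, and eliminate the resulting connection terms using the torsion-free identity (5). This yields the Koszul formula
\[
2g(\nabla^g_XY, Z)=Xg(Y,Z)+Yg(Z,X)-Zg(X,Y)+g([X,Y],Z)-g([X,Z],Y)-g([Y,Z],X).
\]
Since each $g_a$ is a genuine inner product, hence nondegenerate, and since the coordinate vector fields $\partial/\partial x_l$ belong to $\Gamma_1(TA)$, taking $Z=\partial/\partial x_l$ for $l=1,\ldots,k$ recovers every component of $\nabla^g_XY$ almost everywhere. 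Thus $\nabla^g_XY$ is determined uniquely as an element of $\Gamma_{\mathrm{Bor}}(TA)$.

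For existence, I would define the Christoffel symbols
\[
\Gamma^m_{ij}=\frac{1}{2}\sum_l g^{ml}\left(\frac{\partial g_{jl}}{\partial x_i}+\frac{\partial g_{il}}{\partial x_j}-\frac{\partial g_{ij}}{\partial x_l}\right).
\]
Because $g\in\mathrm{Riem}_1(A)$, each $g_{ij}$ is weakly Lipschitz, so every $\partial g_{ij}/\partial x_l$ is a Borel function defined almost everywhere on $A$; the inverse entries $g^{ml}$ are Borel since $g$ is positive definite, so each $\Gamma^m_{ij}$ is Borel. For $X=\sum_iX_i\partial/\partial x_i\in\Gamma_{\mathrm{Bor}}(TA)$ and $Y=\sum_jY_j\partial/\partial x_j\in\Gamma_1(TA)$ I then set
\[
\nabla^g_XY=\sum_m\left(X(Y_m)+\sum_{i,j}\Gamma^m_{ij}X_iY_j\right)\frac{\partial}{\partial x_m}.
\]
Here $X(Y_m)=\sum_iX_i\,\partial Y_m/\partial x_i$ is Borel because $Y_m$ is weakly Lipschitz; hence $\nabla^g_XY\in\Gamma_{\mathrm{Bor}}(TA)$, which is property (1).

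It remains to check (2)--(6), all of which are pointwise almost-everywhere identities in coordinates. Additivity (2) and the two linearity statements in (3) are immediate from the linearity of the defining formula in $X$ and $Y$ with Borel coefficients; the Leibniz rule (4) follows from the product rule $X(fY_m)=X(f)Y_m+fX(Y_m)$, which holds a.e.\ for weakly Lipschitz $f$; the torsion-free property (5) follows from the symmetry $\Gamma^m_{ij}=\Gamma^m_{ji}$ together with the definition of $[X, Y]$; and the metric-compatibility (6) reduces to the identity $\partial g_{ij}/\partial x_l=\sum_m(\Gamma^m_{li}g_{mj}+\Gamma^m_{lj}g_{im})$, a direct consequence of the definition of the $\Gamma^m_{ij}$.

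I expect the main difficulty to lie not in the algebra, which is classical, but in verifying that each manipulation remains valid under the weak Lipschitz hypothesis, where the functions involved are differentiable only almost everywhere and all identities hold only up to $\upsilon$-null sets. Concretely, one must confirm that the partial derivatives above are independent of the representatives of the equivalence classes and that the product and chain rules persist almost everywhere for weakly Lipschitz functions; this rests on Rademacher's theorem and the a.e.\ differentiability recorded earlier, together with the fact that a common Lebesgue density point of two Borel sets yields the same derivative. Once this bookkeeping is in place, both the Koszul computation for uniqueness and the verification of (2)--(6) go through as in the smooth case.
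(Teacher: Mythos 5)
Your proof is correct, and the existence half (Christoffel symbols, then a coordinatewise verification of (2)--(6)) is exactly what the paper does. The uniqueness half, however, takes a genuinely different and in fact shorter route. The paper also derives the Koszul identity, obtaining $g(\nabla^1_XY-\nabla^2_XY,Z)=0$ for every $Z\in\Gamma_1(TA)$, but then faces the problem that the difference field $\sum h_i\,\partial/\partial x_i$ has only Borel coefficients and so cannot itself be substituted for $Z$; it resolves this with a Lusin-type approximation (its Claim on approximating continuous functions on bounded Borel sets by Lipschitz functions via a covering argument), plugging in Lipschitz approximants and passing to the limit. You instead test against the constant coordinate fields $Z=\partial/\partial x_l$, which lie in $\Gamma_1(TA)$, and invoke the pointwise nondegeneracy of $g_a$ to conclude $h_i=0$ a.e.; this eliminates the Lusin/covering machinery entirely and is a clean simplification. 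One small bookkeeping point you should make explicit: the Koszul derivation uses properties (5) and (6) in all three cyclic permutations, so it only determines $\nabla^g_XY$ for $X,Y\in\Gamma_1(TA)$; to get uniqueness for arbitrary $X\in\Gamma_{\mathrm{Bor}}(TA)$ you must then invoke property (3) to write $\nabla^g_{X}Y=\sum_iX_i\nabla^g_{\partial/\partial x_i}Y$, as the paper does in its final line. With that one sentence added, your argument is complete.
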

\begin{proof}
Let 
\begin{equation}
\Gamma_{i, j}^m=\frac{1}{2}\sum_l g^{ml}\left( \frac{\partial g_{jl}}{\partial x_i} + \frac{\partial g_{il}}{\partial x_j}-\frac{\partial g_{ij}}{\partial x_l}\right)
\end{equation}
and 
\begin{equation}
\nabla _X^gY=\sum _{i, j}\left( X_i\frac{\partial Y_j}{\partial x_i} \frac{\partial }{\partial x_j} +X_i Y_j\Gamma _{i, j}^m\frac{\partial }{\partial x_m}\right),
\end{equation}
where $X=\sum X_i \partial /\partial x_i$ and $Y=\sum Y_i\partial /\partial x_i$.
It is easy to check that the properties above hold for this $\nabla^g$.
Therefore we have the existence.

Next, we check the uniqueness.
Let $\nabla^1$ and $\nabla ^2$ be Levi-Civita connections on $A$.
Fix $X=\sum X_i\partial /\partial x_i \in \Gamma_{1} (TA), Y=\sum Y_i\partial / \partial x_i \in \Gamma_{1} (TA)$.
Since
\[2g(\nabla ^l _XY, Z)= Xg(Y, Z) + Yg(Z, X)-Zg(X, Y) 
+g([X, Y], Z)-g([Y, Z], X) +g([Z, X], Y) \]
for every $Z \in \Gamma_1 (TA)$ and every $l=1, 2$, we see that $g(\nabla ^1 _XY-\nabla ^2 _XY, Z)=0$ for every $Z \in \Gamma_{1} (TA)$.
Put $\nabla ^1 _XY-\nabla ^2 _XY = \sum h_i \partial / \partial x_i$.
By Lusin's theorem, there exists a sequence of compact subsets $\{A_j\}_j$ of $A$ such that $H^k(A \setminus A_j) \to 0$ as $j \to \infty$ and that $h_i|_{A_j}$ is continuous for every $i, j$. We now make the following elementary claim:
\begin{claim}\label{fund3}
Let $K$ be a bounded Borel subset of $\mathbf{R}^k$, $h$ a continuous function on $K$ and $\epsilon >0$.
Then there exist a Borel subset $K_{\epsilon}$ of $K$ and a Lipschitz function $h_{\epsilon}$ on $\mathbf{R}^k$ such that $|h(x)-h_{\epsilon}(x)|<\epsilon$ for every $x \in K_{\epsilon}$ and that $H^k(K \setminus K_{\epsilon})<\epsilon$.
\end{claim}
The proof is as follows.
For every $x \in \mathrm{Leb}\,K$, there exists $r_x>0$ such that $H^k(B_r(x) \cap K)/H^k(B_r(x)) \ge 1-\epsilon$ for every $0<r<r_x$, and
that $|h(x)-h(y)|<\epsilon$ for every $y \in K \cap B_{r_x}(x)$. 
By standard covering lemma (see for instance Chapter $1$ in \cite{le}), there exists a countable pairwise disjoint collection $\{\overline{B}_{r_i}(x_i)\}_i$ such that 
$x_i \in \mathrm{Leb}\,K$, $r_i<r_{x_i}/5$ and that 
$\mathrm{Leb}\,K \setminus \bigcup_{i=1}^N\overline{B}_{r_i}(x_i) \subset \bigcup_{i=N+1}^{\infty}\overline{B}_{5r_i}(x_i)$
for every $N$.
Fix $N$ with $\sum_{j=N+1}^{\infty}H^k(B_{r_i}(x_i))<\epsilon/5^k$.
Then we have $H^k\left( \mathrm{Leb}\,K \setminus \bigcup_{i=1}^N\overline{B}_{r_i}(x_i)\right)<\epsilon$.
Define a Lipschitz function $f$ on $\bigcup_{i=1}^NB_{r_i}(x_i)$ by $f|_{B_{r_i}(x_i)}\equiv h(x_i)$.
Let $K_{\epsilon}=K \cap \bigcup_{i=1}^NB_{r_i}(x_i)$ and let $h_{\epsilon}$ be a Lipschitz function on $\mathbf{R}^k$ with $h_{\epsilon}|_{K_{\epsilon}}=f$.
Then we have $|h_{\epsilon}(x)-h(x)|<\epsilon$ for every $x \in K_{\epsilon}$.
Thus we have Claim \ref{fund3}.

Therefore there exist collections of Borel subsets $\{A_{j, k}\}_k$ of $A_j$, and of Lipschitz functions $\{h_{i, j, k}\}_{i, j, k}$ on $\mathbf{R}^k$  such that $H^k(A_j \setminus A_{j,k})<2^{-k}$ and that
$|h_i(x)-h_{i, j, k}(x)|<2^{-k}$ for every $x \in A_{j, k}$.
Let $\hat{A}_j=\bigcap_{m=1}^{\infty}\bigcup_{k=m}A_{j,k}$.
Taking $Z=\sum_i h_{i, j, k}\partial / \partial x_i$, we have $g(\sum_i h_i\partial /\partial x_i, \sum_i h_{i, j, k}\partial / \partial x_i)=0$ on $A_{j, k}$.
Letting $k \to \infty$ we have $g(\sum_i h_i\partial /\partial x_i, \sum_i h_{i}\partial / \partial x_i)=0$ and thus $h_i(x)\equiv 0$ on $\hat{A}_j$.
Since $H^k(A_j \setminus \hat{A}_j)=0$, we have $\nabla^1_XY=\nabla^2_XY$.
Therefore for every $\hat{X} =\sum\hat{X}_i\partial/\partial x_i \in \Gamma_{\mathrm{Bor}}(TA)$, we have 
$\nabla_{\hat{X}}^1Y=\sum \hat{X}_i \nabla^1_{\partial/\partial x_i}Y=\sum \hat{X}_i \nabla^2_{\partial/\partial x_i}Y=\nabla^2_{\hat{X}}Y$.
Thus we have the uniqueness.
\end{proof}
Let $G$ be a bi-Lipschitz embedding from $A$ to $\mathbf{R}^k$ and $g \in \mathrm{Riem}_1(A)$.
Assume that $G$ is weakly twice differentiable on $A$, i.e., $J(G)$ is weakly Lipschitz on $A$.
Note that $J(G^{-1})$ is weakly Lipschitz on $G(A)$ and that a Riemannian metric $G_*g$ on $G(A)$ defined by $G_*g(\partial/\partial x_i, \partial /\partial x_j)=g((G^{-1})_*(\partial/\partial x_i), (G^{-1})_*(\partial/\partial x_j))$ is weakly Lipschitz on $G(A)$.
\begin{corollary}\label{levi2}
With the same notation as above, we have $G_*(\nabla _X^gY)=\nabla _{G_*X}^{G_*g}G_*Y$ for every $X \in \Gamma_{\mathrm{Bor}}(TA)$ and every $Y \in \Gamma_{1} (TA)$.
\end{corollary}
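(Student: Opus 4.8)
The plan is to deduce the identity from the uniqueness of the Levi-Civita connection established in Proposition \ref{levi}, rather than by manipulating Christoffel symbols directly. Since $J(G)$ and $J(G^{-1})$ are both weakly Lipschitz, the pushforward $G_*$ carries $\Gamma_1(TA)$ into $\Gamma_1(TG(A))$ and $\Gamma_{\mathrm{Bor}}(TA)$ into $\Gamma_{\mathrm{Bor}}(TG(A))$, with $(G^{-1})_*$ acting a.e. as its inverse. Thus I would define a candidate connection $\tilde{\nabla}$ on $G(A)$ for the metric $G_*g \in \mathrm{Riem}_1(G(A))$ by
\begin{equation}
\tilde{\nabla}_{\tilde{X}}\tilde{Y} = G_*\left( \nabla^g_{(G^{-1})_*\tilde{X}} (G^{-1})_*\tilde{Y}\right)
\end{equation}
for $\tilde{X} \in \Gamma_{\mathrm{Bor}}(TG(A))$ and $\tilde{Y} \in \Gamma_1(TG(A))$. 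The goal is to verify that $\tilde{\nabla}$ satisfies the six defining properties (1)--(6) of Proposition \ref{levi} for $G_*g$; the uniqueness part of that proposition then forces $\tilde{\nabla}=\nabla^{G_*g}$, and evaluating at $\tilde{X}=G_*X$, $\tilde{Y}=G_*Y$ gives the desired equality.

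The verification of the purely algebraic axioms (1)--(4) is formal and rests on two naturality relations that hold a.e.: additivity and Borel-linearity of $G_*$, together with the chain-rule identity $(G_*X)(h)\circ G = X(h\circ G)$ for weakly Lipschitz $h$ on $G(A)$, which in particular yields $G_*(fX)=(f\circ G^{-1})\,G_*X$ and the Leibniz rule needed for (4). Since $\nabla^g$ itself satisfies (1)--(4), each identity transports through $G_*$ after pulling back the arguments by $(G^{-1})_*$.

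The two substantive axioms are torsion-freeness (5) and metric compatibility (6). For (5), I would use Proposition \ref{def}, which (precisely because $J(G)$ is weakly Lipschitz) gives $[G_*X, G_*Y]=G_*[X,Y]$; combined with axiom (5) for $\nabla^g$ this yields $\tilde{\nabla}_{G_*X}G_*Y-\tilde{\nabla}_{G_*Y}G_*X=G_*[X,Y]=[G_*X,G_*Y]$. For (6), the engine is the invariance $G_*g(G_*Y,G_*Z)=g(Y,Z)\circ G^{-1}$, immediate from the definition of $G_*g$ and $(G^{-1})_*G_*=\mathrm{id}$; applying the chain-rule identity to reduce $(G_*X)\big(G_*g(G_*Y,G_*Z)\big)$ to $X\big(g(Y,Z)\big)\circ G^{-1}$ and invoking axiom (6) for $\nabla^g$ finishes the check.

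The main obstacle is entirely bookkeeping in the almost-everywhere / Borel category: one must restrict to the countable Borel decomposition on which $J(G)$, $J(G^{-1})$, and the relevant coefficients are genuinely Lipschitz, so that all pointwise identities make sense a.e. and $G_*$, $(G^{-1})_*$ are honest mutual inverses there. The only genuinely non-elementary input is Proposition \ref{def} (hence Proposition \ref{abc} and Lemma \ref{fund}), which is exactly where the hypothesis of weak twice differentiability of $G$ is consumed; granting it, the argument reduces to the classical pointwise-a.e. transformation behaviour of a connection under a diffeomorphism.
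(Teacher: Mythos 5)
Your proposal is correct and follows exactly the paper's own route: the paper also defines the candidate map $T(X,Y)=G_*(\nabla^g_{(G^{-1})_*X}(G^{-1})_*Y)$, notes that it satisfies the six defining properties of the Levi-Civita connection for $G_*g$ on $G(A)$, and concludes by the uniqueness statement in Proposition \ref{levi}. Your write-up merely fills in the verification of the axioms (via Proposition \ref{def} for torsion-freeness and the invariance of $G_*g$ for metric compatibility) that the paper leaves as ``easy to check.''
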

\begin{proof}
It is easy to check that if we define a map $T$ from $\Gamma_{\mathrm{Bor}}(TG(A)) \times \Gamma_{1}(TG(A))$ to $\Gamma_{\mathrm{Bor}}(TG(A))$  by  $T(X, Y)=G_*(\nabla^g_{(G^{-1})_*X}(G^{-1})_*Y)$, then $T$ satisfies the properties of the Levi-Civita connection of $G_*g$ on $G(A)$.
Thus the corollary follows from the uniqueness of the Levi-Civita connection.
\end{proof}
\begin{definition}\label{fund4}
Let $f$ be a weakly twice differentiable function on $A$, $\omega \in \Gamma_1(T^*A)$ and $X \in \Gamma_{1}(TA)$.
Define
\begin{enumerate}
\item a Borel tensor field $\nabla^g \omega \in \Gamma_{\mathrm{Bor}}(T^*A \otimes T^*A)$ of type $(0, 2)$ on $A$ by
\begin{equation}
\nabla^g \omega = \sum_{i, j} g\left( \nabla^g_{\frac{\partial}{\partial x_i}}\omega^*, \frac{\partial}{\partial x_j}\right)dx_i \otimes dx_j,
\end{equation}
\item \textit{the Hessian $\mathrm{Hess}^g_f$ of $f$} by $\mathrm{Hess}^g_f=\nabla^g df$,
\item \textit{the divergence $\mathrm{div}^g\,X$ of $X$} by
\begin{equation}
\mathrm{div}^g\,X=\mathrm{trace\,of}\, \nabla^gX^*=\sum_i g\left( \nabla^g_{\frac{\partial}{\partial x_i}}X, \frac{\partial}{\partial x_i}\right),
\end{equation}
\item \textit{the Laplacian $\Delta^g f$ of $f$} by $\Delta^g f= -\mathrm{div}^g\, \nabla^g f$.
\end{enumerate}
\end{definition}
We end this subsection by giving several properties of them:
\begin{corollary}\label{sym}
With the same notation as in Definition \ref{fund4}, we have the following:
\begin{enumerate}
\item $\nabla^g\omega=G^*(\nabla^{G_*g}(G^{-1})^*\omega)$.
\item $\mathrm{Hess}^g_f=G^*(\mathrm{Hess}^{G_*g}_{f \circ G^{-1}})$.
\item $\mathrm{div}^g\,X \circ G^{-1} =\mathrm{div}^{G_*g}\,G_*X$
\item $\mathrm{Hess}^g_f(x)$ is symmetric for a.e. $x \in A$.
\item $\mathrm{div}^g\,(h(\nabla^g f))=-h\Delta^g f +g(\nabla^g f, \nabla^g h)$ for every weakly twice differentiable function $h$ on $A$. 
\item $\Delta^g (fh)=h\Delta^g f-2g(\nabla^g f, \nabla^g h)+f\Delta^g h$ for every weakly twice differentiable function $h$ on $A$.
\end{enumerate}
\end{corollary}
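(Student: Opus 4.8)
The plan is to reduce parts (1)--(3) to the naturality of the Levi-Civita connection established in Corollary \ref{levi2}, to obtain the symmetry in part (4) from the commutativity of mixed partials in Proposition \ref{2nd}, and to deduce the product rules (5)--(6) formally from the Leibniz properties of Proposition \ref{levi}. Throughout I write $\tilde{g}=G_*g$ on $G(A)$.

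First I would record the elementary compatibility of the musical isomorphisms with $G$. Since $G_*$ preserves the pairing, $\tilde{g}(G_*U, G_*V)=g(U,V)$ (this is the definition of $G_*g$), and since pullback of forms is dual to pushforward of vectors, $((G^{-1})^*\omega)(G_*V)=\omega(V)$, a short nondegeneracy argument gives that raising an index commutes with pushforward, i.e. $((G^{-1})^*\omega)^*=G_*(\omega^*)$, where the left sharp is taken with respect to $\tilde{g}$ and the right with respect to $g$. For part (1) I would then evaluate the $(0,2)$-tensor $\nabla^{\tilde{g}}((G^{-1})^*\omega)$ on a pair $(G_*U, G_*V)$: by the definition in Definition \ref{fund4}, the identity just recorded, and Corollary \ref{levi2} (which gives $\nabla^{\tilde{g}}_{G_*U}G_*(\omega^*)=G_*(\nabla^g_U\omega^*)$), this collapses to $g(\nabla^g_U\omega^*, V)=\nabla^g\omega(U,V)$, which is exactly the asserted identity $\nabla^g\omega=G^*(\nabla^{\tilde{g}}(G^{-1})^*\omega)$. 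Part (2) is the special case $\omega=df$ once one notes $(G^{-1})^*df=d(f\circ G^{-1})$, which is the chain rule contained in Proposition \ref{abc}. Part (3) follows by applying part (1) to $\omega=X^*$ and using that the metric trace is natural, $\mathrm{tr}_g(G^*T)=(\mathrm{tr}_{\tilde{g}}T)\circ G$, so that $\mathrm{div}^gX=\mathrm{tr}_g(\nabla^gX^*)$ transforms as a scalar.

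For part (4) I would compute in coordinates using the metric compatibility of $\nabla^g$ (property (6) of Proposition \ref{levi}) together with $g(\nabla^g f, \partial/\partial x_j)=\partial f/\partial x_j$, obtaining $\mathrm{Hess}^g_f(\partial/\partial x_i, \partial/\partial x_j)=\partial^2 f/(\partial x_i\partial x_j)-\sum_m\Gamma_{i,j}^m\,\partial f/\partial x_m$ for a.e. $x$. The antisymmetric part then splits into two contributions: the second-derivative term is symmetric a.e. by Proposition \ref{2nd}, and the Christoffel symbols satisfy $\Gamma_{i,j}^m=\Gamma_{j,i}^m$ directly from the formula in the proof of Proposition \ref{levi} (equivalently, from the torsion-free identity (5) applied to the commuting coordinate fields). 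Hence the antisymmetric part vanishes for a.e. $x$.

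Finally, parts (5) and (6) are formal consequences of the Leibniz rule (property (4) of Proposition \ref{levi}), the defining pairing $g(\nabla^g f, X)=X(f)$ of the gradient, and the definition $\Delta^g=-\mathrm{div}^g\nabla^g$. For (5) I would expand $\nabla^g_{\partial/\partial x_i}(h\nabla^g f)$ by the Leibniz rule and take the $g$-trace; the term carrying $\partial h/\partial x_i$ assembles into $g(\nabla^g f, \nabla^g h)$ while the remaining term is $h\,\mathrm{div}^g\nabla^g f=-h\Delta^g f$. Statement (6) then follows by applying (5) to each summand of $\nabla^g(fh)=h\nabla^g f+f\nabla^g h$. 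I expect the only genuine obstacle to be the bookkeeping in part (1): one must check that every musical and trace manipulation survives in the merely weakly Lipschitz and Borel category and holds $\upsilon$-a.e., so that the hypotheses of Corollary \ref{levi2} (weak Lipschitz continuity of $J(G)$, $J(G^{-1})$ and of $G_*g$) are legitimately in force; the remaining parts are short once (1) is in hand.
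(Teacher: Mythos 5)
Your proposal is correct and follows essentially the same route as the paper: parts (1)--(3) are reduced to the naturality statement of Corollary \ref{levi2}, part (4) uses the coordinate expression $\mathrm{Hess}^g_f(\partial/\partial x_i,\partial/\partial x_j)=\partial^2 f/\partial x_i\partial x_j-(\nabla^g_{\partial/\partial x_i}\partial/\partial x_j)(f)$ together with Proposition \ref{2nd} and the symmetry of the Christoffel symbols, and parts (5)--(6) are the formal Leibniz computations the paper dismisses as ``simple calculations.'' You merely supply the musical-isomorphism bookkeeping that the paper leaves implicit.
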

\begin{proof}
$(1), (2)$ and $(3)$ all follow directly from Corollary \ref{levi2}.
Since
\begin{equation}
\mathrm{Hess}^g_f=\sum_{i, j} \left(\frac{\partial ^2 f}{\partial x_i \partial x_j}-\left(\nabla^g_{\frac{\partial}{\partial x_i}}\frac{\partial}{\partial x_j}\right)(f)\right) dx_i \otimes dx_j,
\end{equation}
$(4)$ follows from Proposition \ref{2nd}.
On the other hand, by simple calculations, we have $(5)$ and $(6)$.
\end{proof}
\subsection{Weakly second order differential structure on weakly rectifiable metric measure spaces}
In this subsection, we will discuss some weak twice differentiability on weakly rectifiable metric measure spaces.
\begin{definition}[Weakly second order differential structure]\label{2nd2nd2nd}
Let $(X, \upsilon)$ be a metric measure space and $0<\alpha \le 1$. We say that 
\begin{enumerate}
\item \textit{$(X, \upsilon)$ has a weakly $C^{1, \alpha}$-structure} if there exists a weakly rectifiable coordinate system $\{(C_i^l, \phi_i^l)\}_{l, i}$ of $(X, \upsilon)$ such that every Jacobi matrix map $J(\Phi_{ij}^l)$ of $\Phi_{ij}^l=\phi_j^l \circ (\phi_i^l)^{-1}$ from $\phi_i^l(C_i^l \cap C_j^l)$ to $\phi_j^l(C_i^l \cap C_j^l)$ is weakly $\alpha$-H$\ddot{\mathrm{o}}$lder continuous, 
\item \textit{$(X, \upsilon)$ has a weakly second order differential structure} if $(X, \upsilon)$ has a weakly $C^{1, 1}$-structure.
\end{enumerate}
\end{definition}
\begin{definition}\label{pppo}
Let $0<\hat{\alpha}\le \alpha \le 1$, let $(X, \upsilon)$ be a metric measure space having a weakly $C^{1, \alpha}$-structure with respect to $\{(C_i^l, \phi_i^l)\}_{l, i}$, a Borel subset $A$ of $X$ and
$\omega=\{\omega_i^l\}_{l, i}$ a family of Borel $p$-forms $\omega_i^l$ on $\phi_i^l(C_i^l \cap A)$.
We say that 
\begin{enumerate}
\item $\omega$ is a \textit{Borel $p$-form on $A$} if  
$(\Phi_{ij}^l)^* \omega_j^l=\omega_i^l$ on $\phi_i^l(C_i^l \cap C_j^l \cap A)$ for every $i, l, j$ with $\upsilon(C_i^l \cap C_j^l \cap A)>0$.
\item $\omega$ is a \textit{weakly $\hat{\alpha}$-H$\ddot{o}$lder continuous $p$-form on $A$} if $\omega$ is a Borel $p$-form on $A$, and $\omega_i^l \in \Gamma_{\hat{\alpha}}(\bigwedge^pT^*\phi_i^l(C_i^l \cap A))$, 
\end{enumerate}
Write $\omega |_{C_i^l \cap A}=\omega_i^l$.  
\end{definition}
Note that $\omega$ can be identified as a Borel section from $A$ to a $L^{\infty}$-vector bundle $\bigwedge^pT^*X$ on $X$.
See Section $4$ in \cite{ch} by Cheeger or Section $6$ in \cite{ch-co3} by Cheeger-Colding for the details. 
Note that in the same way as in Definition \ref{pppo}, we can give definitions of Borel vector (tensor) field on $A$, its H$\ddot{\mathrm{o}}$lder continuity, its equivalence, and so on.
For instance, denote the set of equivalent classes of Borel sections $s: A \to T^*X \otimes T^*X$ by $\Gamma_{\mathrm{Bor}}(T^*A \otimes T^*A)$.
Similarly, define $\Gamma_{\mathrm{Bor}}(T^*A)$, $\Gamma_{\mathrm{Bor}}(\bigwedge^pT^*A)$, and so on.

Assume that $(X, \upsilon)$ has a weakly second order differential structure with respect to  $\{(C_i^l, \phi_i^l)\}_{l, i}$.
Let $A$ be a Borel subset of $X$.
As in the previous section, denote the set of equivalent classes of Borel vector fields on $A$ represented by a weakly Lipschitz vector field on $A$ by $\Gamma_{1}(TA, \{(C_i^l, \phi_i^l)\}_{l, i})$.
Similarly define $\Gamma_{1}(T^*A \otimes T^*A, \{(C_i^l, \phi_i^l)\}_{l, i})$ and so on.
We often write: $\Gamma_{1}(\bigwedge^pT^*A)=\Gamma_{1}(\bigwedge^pT^*A, \{(C_i^l, \phi_i^l)\}_{l, i})$, $\Gamma_1(T^*A \otimes T^*A)=\Gamma_{1}(T^*A \otimes T^*A, \{(C_i^l, \phi_i^l)\}_{l, i})$, and so on, for brevity.
\begin{proposition}
Let $\omega \in \Gamma_1(\bigwedge^pT^*A)$.
Then there exists $d\omega \in \Gamma_{\mathrm{Bor}}(\bigwedge^{p+1}T^*A)$ defined uniquely such that $d\omega|_{C_i^l\cap A}=d(\omega|_{\phi_i^l(C_i^l \cap A)})$.
\end{proposition}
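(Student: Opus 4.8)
The plan is to define $d\omega$ one chart at a time and then verify that the pieces fit together as a Borel $(p+1)$-form in the sense of Definition \ref{pppo}. Concretely, for each pair $(l,i)$ I would set
\[
d\omega|_{C_i^l \cap A} := d\bigl(\omega|_{\phi_i^l(C_i^l \cap A)}\bigr) = d\omega_i^l,
\]
where on the right-hand side $d$ is the exterior derivative of a weakly Lipschitz $p$-form on a Borel subset of $\mathbf{R}^l$ introduced in Subsection $3.1$. Since $\omega_i^l \in \Gamma_1(\bigwedge^pT^*\phi_i^l(C_i^l \cap A))$, each $d\omega_i^l$ is a well-defined element of $\Gamma_{\mathrm{Bor}}(\bigwedge^{p+1}T^*\phi_i^l(C_i^l \cap A))$, so the only substantive thing to check is that the family $\{d\omega_i^l\}_{l,i}$ satisfies the cocycle condition required of a Borel $(p+1)$-form, namely $(\Phi_{ij}^l)^*(d\omega_j^l) = d\omega_i^l$ on $\phi_i^l(C_i^l \cap C_j^l \cap A)$ whenever $\upsilon(C_i^l \cap C_j^l \cap A) > 0$.

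This compatibility is the heart of the matter, and it is exactly where the weakly second order (equivalently, weakly $C^{1,1}$) hypothesis is used. By Definition \ref{2nd2nd2nd} the transition Jacobian $J(\Phi_{ij}^l)$ is weakly Lipschitz on $\phi_i^l(C_i^l \cap C_j^l)$, so Proposition \ref{abc} applies with $G = \Phi_{ij}^l$ and gives $d\bigl((\Phi_{ij}^l)^*\omega_j^l\bigr) = (\Phi_{ij}^l)^*(d\omega_j^l)$. Since $\omega$ is a Borel $p$-form we already have $(\Phi_{ij}^l)^*\omega_j^l = \omega_i^l$, and combining the two identities yields $d\omega_i^l = (\Phi_{ij}^l)^*(d\omega_j^l)$, which is precisely the required cocycle relation. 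Hence $\{d\omega_i^l\}_{l,i}$ determines an element $d\omega \in \Gamma_{\mathrm{Bor}}(\bigwedge^{p+1}T^*A)$.

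For uniqueness I would observe that the defining property $d\omega|_{C_i^l \cap A} = d(\omega|_{\phi_i^l(C_i^l \cap A)})$ pins down $d\omega$ on each $C_i^l \cap A$, and since $\upsilon(A \setminus \bigcup_{l,i}(C_i^l \cap A)) = 0$ by Definition \ref{rectifiable}, any two Borel $(p+1)$-forms having this property agree $\upsilon$-almost everywhere and therefore represent the same class in $\Gamma_{\mathrm{Bor}}(\bigwedge^{p+1}T^*A)$.

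I do not expect a genuine obstacle: the analytic content of the statement has already been absorbed into Proposition \ref{abc}, so the proof is essentially bookkeeping. The one point worth emphasizing is that the hypothesis of \ref{abc}, weakly Lipschitz transition Jacobians, is supplied by the weakly $C^{1,1}$ structure and not by a weaker $C^{1,\alpha}$ structure with $\alpha<1$. Indeed, if $J(\Phi_{ij}^l)$ were only weakly $\alpha$-H\"older, the pullback $(\Phi_{ij}^l)^*\omega_j^l$ would in general be only weakly $\alpha$-H\"older, so its exterior derivative would fall outside the domain of $d$ and the chart-by-chart definition above would not be available. This is why the existence of $d$ on forms genuinely requires the full weakly second order differential structure.
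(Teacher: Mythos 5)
Your proof is correct and follows exactly the route the paper intends: the paper's own proof is the one-line remark that the proposition "is a direct consequence of Proposition \ref{abc}," and your chart-by-chart definition together with the cocycle check via $d((\Phi_{ij}^l)^*\omega_j^l)=(\Phi_{ij}^l)^*(d\omega_j^l)$ is precisely the argument that remark compresses. Your closing observation about why weakly Lipschitz (rather than merely $\alpha$-H\"older) transition Jacobians are needed is a useful elaboration but does not change the substance.
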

\begin{proof}
This is a direct consequence of Proposition \ref{abc}.
\end{proof}
\begin{proposition}
Let $V, W \in \Gamma_{1}(TA)$.
Then there exists  $[V, W] \in \Gamma_{\mathrm{Bor}}(TA)$ defined uniquely such that $[V, W]|_{C_i^l\cap A}=[V_{C_i^l \cap A}, W_{C_i^l \cap A}]$.
\end{proposition}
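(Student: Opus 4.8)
The plan is to construct $[V,W]$ chart by chart and then check that the local pieces glue, the gluing being an immediate application of Proposition~\ref{def}. Write $V_i^l := V|_{C_i^l \cap A}$ and $W_i^l := W|_{C_i^l \cap A}$ for the local representatives, which by hypothesis are weakly Lipschitz vector fields on the Borel set $\phi_i^l(C_i^l \cap A) \subset \mathbf{R}^l$, i.e.\ elements of $\Gamma_1(T\phi_i^l(C_i^l \cap A))$. The Euclidean bracket from Section~$3.1$ then produces a Borel vector field $\beta_i^l := [V_i^l, W_i^l] \in \Gamma_{\mathrm{Bor}}(T\phi_i^l(C_i^l \cap A))$, and the candidate object is the family $\{\beta_i^l\}_{l,i}$.

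First I would verify that $\{\beta_i^l\}$ satisfies the compatibility condition (the vector-field analogue of Definition~\ref{pppo}) on the overlaps $\phi_i^l(C_i^l \cap C_j^l \cap A)$. Since $V$ and $W$ are global vector fields on $A$, their representatives obey the transformation rule $V_j^l = (\Phi_{ij}^l)_* V_i^l$ and $W_j^l = (\Phi_{ij}^l)_* W_i^l$. The crucial input is that the weakly second order (i.e.\ weakly $C^{1,1}$) structure guarantees that each $J(\Phi_{ij}^l)$ is weakly $1$-H\"older, hence weakly Lipschitz, on $\phi_i^l(C_i^l \cap C_j^l \cap A)$. This is exactly the hypothesis of Proposition~\ref{def} with $G = \Phi_{ij}^l$, so that
\begin{equation}
\beta_j^l = [(\Phi_{ij}^l)_* V_i^l, (\Phi_{ij}^l)_* W_i^l] = (\Phi_{ij}^l)_*[V_i^l, W_i^l] = (\Phi_{ij}^l)_*\beta_i^l
\end{equation}
holds a.e.\ on the overlap whenever $\upsilon(C_i^l \cap C_j^l \cap A) > 0$. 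This is precisely the required gluing condition, so $\{\beta_i^l\}$ defines an element $[V,W] \in \Gamma_{\mathrm{Bor}}(TA)$ with $[V,W]|_{C_i^l \cap A} = \beta_i^l = [V|_{C_i^l \cap A}, W|_{C_i^l \cap A}]$.

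Uniqueness is then immediate: any two Borel vector fields on $A$ whose local representatives agree a.e.\ in every chart are equivalent, since by Definition~\ref{rectifiable} the charts $\{C_i^l\}_{l,i}$ cover $A$ up to a $\upsilon$-null set.

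I do not expect a serious obstacle here, as Proposition~\ref{def} carries the entire analytic content; the role of the weakly second order hypothesis is only to supply the weak Lipschitz regularity of $J(\Phi_{ij}^l)$ needed to invoke it. The one point demanding care is correctly bookkeeping the transformation law $V_j^l = (\Phi_{ij}^l)_* V_i^l$ for vector fields (contravariant, via pushforward), as opposed to the pullback rule $(\Phi_{ij}^l)^*\omega_j^l = \omega_i^l$ used for forms in Definition~\ref{pppo}, and confirming that the various a.e.\ qualifiers remain compatible across the countably many overlaps.
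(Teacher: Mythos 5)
Your argument is correct and is exactly the route the paper takes: the paper's proof is the one-line statement that the proposition is a direct consequence of Proposition~\ref{def}, and your write-up simply spells out the chart-by-chart construction and the gluing via $[(\Phi_{ij}^l)_*V_i^l, (\Phi_{ij}^l)_*W_i^l] = (\Phi_{ij}^l)_*[V_i^l, W_i^l]$ that the paper leaves implicit. No issues.
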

\begin{proof}This is a direct consequence of Proposition \ref{def}.
\end{proof}
\begin{definition}[Weakly twice differentiable function]\label{2ndd}
We say that a Borel function $f$ on $A$ is \textit{weakly twice differentiable on $A$ with respect to $\{(C_i^l, \phi_i^l)\}_{l, i}$} if every $f \circ (\phi^l_i)^{-1}$ is weakly twice differentiable on $\phi_i^l(C_i^l \cap A)$.
\end{definition}
The following is a direct consequence of Proposition \ref{2nd}:
\begin{corollary}
Let $f$ be a weakly twice differentiable function on $A$ with respect to $\{(C_i^l, \phi_i^l)\}_{l, i}$.
Then we have $d(df)=0 \in \Gamma_{\mathrm{Bor}}(\bigwedge^2T^*A)$.
\end{corollary}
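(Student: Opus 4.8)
The plan is to reduce this global statement, phrased in terms of the weakly second order differential structure $\{(C_i^l, \phi_i^l)\}_{l,i}$, entirely to the local Euclidean version already proved in Proposition \ref{2nd}. The point is that by Definition \ref{2ndd}, saying that $f$ is weakly twice differentiable on $A$ with respect to the coordinate system means precisely that each local representative $f \circ (\phi_i^l)^{-1}$ is a weakly twice differentiable function on the Borel subset $\phi_i^l(C_i^l \cap A)$ of $\mathbf{R}^l$, in the sense of the previous subsection. The object $d(df)$ is defined chart by chart via the preceding proposition: $d(df)|_{C_i^l \cap A} = d\bigl(d(f \circ (\phi_i^l)^{-1})\bigr)$ on $\phi_i^l(C_i^l \cap A)$.

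First I would fix an index pair $(l,i)$ with $\upsilon(C_i^l \cap A) > 0$ and apply Proposition \ref{2nd} directly to the Lipschitz function $f \circ (\phi_i^l)^{-1}$ on the Borel set $\phi_i^l(C_i^l \cap A) \subset \mathbf{R}^l$, which is legitimate since weak twice differentiability of $f$ is exactly the hypothesis that $df \circ (\phi_i^l)^{-1} \in \Gamma_1(T^*\phi_i^l(C_i^l \cap A))$. Proposition \ref{2nd} then yields $d\bigl(d(f \circ (\phi_i^l)^{-1})\bigr) = 0 \in \Gamma_{\mathrm{Bor}}(\bigwedge^2 T^*\phi_i^l(C_i^l \cap A))$, i.e.\ the local two-form vanishes $H^l$-almost everywhere on $\phi_i^l(C_i^l \cap A)$. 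Since $\phi_i^l$ is a bi-Lipschitz embedding, pushing the null set back through $(\phi_i^l)^{-1}$ keeps it $\upsilon$-null on $C_i^l \cap A$, so $d(df)|_{C_i^l \cap A} = 0$ for a.e.\ point of $C_i^l \cap A$.

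Finally I would assemble these local vanishings into the global claim. Because $\upsilon(A \setminus \bigcup_{l,i} C_i^l) = 0$ by the weak rectifiability condition (1) of Definition \ref{rectifiable}, the sets $C_i^l \cap A$ cover $A$ up to a $\upsilon$-null set; as the countable union of $\upsilon$-null sets is $\upsilon$-null, $d(df) = 0$ at $\upsilon$-almost every point of $A$, which is exactly the assertion $d(df) = 0 \in \Gamma_{\mathrm{Bor}}(\bigwedge^2 T^*A)$ at the level of equivalence classes.

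There is essentially no obstacle here: the real content was already established in Proposition \ref{2nd}, whose proof used the factorization of $f$ through the auxiliary coordinate changes $\Phi_i^m$ and the compatibility $d(G^*\omega) = G^*(d\omega)$ of Proposition \ref{abc}. The only thing to verify carefully in the present step is that the chartwise-defined $d(df)$ is a well-defined global Borel two-form, which is guaranteed by the preceding proposition (the one asserting the existence of $d\omega$ with $d\omega|_{C_i^l \cap A} = d(\omega|_{\phi_i^l(C_i^l \cap A)})$), itself a consequence of Proposition \ref{abc}. Hence the corollary is an immediate descent of Proposition \ref{2nd} through the countable coordinate cover, and I would state it as such.
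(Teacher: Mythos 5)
Your argument is correct and is exactly what the paper intends: the paper states this corollary as a direct consequence of Proposition \ref{2nd}, i.e.\ one applies the Euclidean result to each local representative $f\circ(\phi_i^l)^{-1}$ on $\phi_i^l(C_i^l\cap A)$ and assembles over the countable cover, with well-definedness of the chartwise $d(df)$ supplied by the preceding proposition. The only nitpick is notational: the hypothesis in each chart is that $d\bigl(f\circ(\phi_i^l)^{-1}\bigr)\in\Gamma_1\bigl(T^*\phi_i^l(C_i^l\cap A)\bigr)$, not ``$df\circ(\phi_i^l)^{-1}$''.
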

We say that $g  \in \Gamma_{\mathrm{Bor}}(T^*A \otimes T^*A)$ is a \textit{Borel Riemannian metric on $A$} if $g$ is symmetric and positive definite.
For a Borel Riemannian metric $g$ on $A$, a weakly Lipschitz function $f$ on $A$, $X \in \Gamma_{\mathrm{Bor}}(TA)$ and $\omega \in \Gamma_{\mathrm{Bor}}(T^*A)$, in the same way as in the previous subsection, define $X^* \in \Gamma_{\mathrm{Bor}}(T^*A)$, $\omega^* \in \Gamma_{\mathrm{Bor}}(TA)$ and $\nabla^gf \in \Gamma_{\mathrm{Bor}}(TA)$.
\begin{remark}
We can \textit{not} discuss a twice differentiability for vector field (or $p (\ge 1)$-form) on $X$ in the same way as above. 
\end{remark}
\subsection{Weakly Lipschitz Riemannian metric on weakly rectifiable metric measure spaces}
In this subsection, we will study Riemannian metrics on weakly rectifiable metric measure spaces.
Let $(X, \upsilon)$ be a weakly rectifiable metric measure space with respect to $\{(C_i^l, \phi_i^l)\}_{l, i}$, and 
$g =\{g_i^l\}_{l, i} \in \Gamma_{\mathrm{Bor}}(T^*X \otimes T^*X)$ a Borel Riemannian metric on $X$. 
\begin{definition}[Weakly Lipschitz Riemannian metric on weakly rectifiable metric measure spaces]\label{wlr}
We say that 
\begin{enumerate}
\item $g$ is a \textit{weakly $\alpha$-H$\ddot{o}$lder Riemannian metric on $X$ with respect to $\{(C_i^l, \phi_i^l)\}_{l, i}$} if $g_i^l \in \mathrm{Riem}_{\alpha}(\phi_i^l(C_i^l))$,
\item $g$ is a \textit{weakly Lipschitz Riemannian metric on $X$ with respect to $\{(C_i^l, \phi_i^l)\}_{l, i}$} if $g_i^l \in \mathrm{Riem}_{1}(\phi_i^l(C_i^l))$.
\end{enumerate}
\end{definition}
\begin{proposition}\label{we2nd}
Let $0<\alpha \le 1$.
Assume that $g$ is a weakly $\alpha$-H$\ddot{o}$lder Riemannian metric on $X$ with respect to $\{(C_i^l, \phi_i^l)\}_{l, i}$.
Then 
we see that $(X, \upsilon)$ has a weakly $C^{1, \alpha}$-structure with respect to $\{(C_i^l, \phi_i^l)\}_{l, i}$ and that 
$g \in \Gamma_{\alpha}(T^*X \otimes T^*X, \{(C_i^l, \phi_i^l)\}_{l, i})$.
\end{proposition}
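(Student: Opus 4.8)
The plan is to concentrate on the first assertion, that $(X,\upsilon)$ carries a weakly $C^{1,\alpha}$-structure with respect to $\{(C_i^l,\phi_i^l)\}_{l,i}$, since the second assertion is then formal. Indeed, once the weakly $C^{1,\alpha}$-structure is available, the tensor analogue of Definition \ref{pppo} identifies $\Gamma_\alpha(T^*X\otimes T^*X,\{(C_i^l,\phi_i^l)\}_{l,i})$ with the Borel sections whose local representatives $g_i^l$ lie in $\mathrm{Riem}_\alpha(\phi_i^l(C_i^l))$; the compatibility $(\Phi_{ij}^l)^*g_j^l=g_i^l$ is exactly the hypothesis $g\in\Gamma_{\mathrm{Bor}}(T^*X\otimes T^*X)$, and the local regularity is exactly Definition \ref{wlr}, so $g\in\Gamma_\alpha(T^*X\otimes T^*X)$ follows at once. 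Everything thus reduces to proving that each Jacobi matrix map $J(\Phi_{ij}^l)$ is weakly $\alpha$-H\"older continuous on $\phi_i^l(C_i^l\cap C_j^l)$.

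The first step I would take is to express the entries of this Jacobian through the metric. Expanding the cotangent pairing in the chart $\phi_i^l$ and using $\langle dx_c,dx_b\rangle_{g_i^l}=(g_i^l)^{cb}$, one gets, a.e. on $\phi_i^l(C_i^l\cap C_j^l)$,
\begin{equation}
\frac{\partial(\Phi_{ij}^l)_s}{\partial x_a}=\sum_b (g_i^l)_{ab}\,\langle d\phi_{j,s}^l,d\phi_{i,b}^l\rangle_g.
\end{equation}
As $g_i^l$ is weakly $\alpha$-H\"older and bounded with bounded inverse, and products of bounded weakly $\alpha$-H\"older functions are again weakly $\alpha$-H\"older, the weak $\alpha$-H\"older continuity of $J(\Phi_{ij}^l)$ is equivalent to that of the cross inner products $\langle d\phi_{j,s}^l,d\phi_{i,b}^l\rangle_g$; the two are interchanged through the invertible weakly $\alpha$-H\"older matrix $(g_i^l)$.

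The heart of the proof is therefore to upgrade the bi-Lipschitz transition map $\Phi_{ij}^l$ to a weakly $C^{1,\alpha}$ map from the sole information that it is a metric isometry, namely $(\Phi_{ij}^l)^*g_j^l=g_i^l$ a.e. with $g_i^l,g_j^l$ weakly $\alpha$-H\"older. Setting $A=J(\Phi_{ij}^l)$, the relation $g_i^l=A^t(g_j^l\circ\Phi_{ij}^l)A$ determines $A$ only up to an orthogonal factor, so the metric identity by itself cannot produce the regularity of $A$; eliminating this pointwise orthogonal ambiguity is the main obstacle. I expect this to require the fact that $A$ is the genuine, curl-free Jacobian of a fixed map, combined with an infinitesimal rigidity argument of Liouville type---the mechanism underlying the regularity theory of isometries, whereby an isometry between Riemannian metrics of class $C^{0,\alpha}$ is of class $C^{1,\alpha}$. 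Because here the metrics are only weakly $\alpha$-H\"older rather than continuous, the plan is to localize by Lusin's theorem, as in Claim \ref{fund3} and in the proof of Proposition \ref{levi}, to a countable Borel decomposition on which $g_i^l$ and $g_j^l\circ\Phi_{ij}^l$ are genuinely $\alpha$-H\"older, to run the rigidity argument on each piece, and to reassemble; ensuring that the local regularity is not destroyed by the bi-Lipschitz scrambling of the H\"older pieces under $\Phi_{ij}^l$ is the delicate point.
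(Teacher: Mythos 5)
Your opening reduction, and the displayed formula
\begin{equation}
\frac{\partial(\Phi_{ij}^l)_s}{\partial x_a}=\sum_b \langle d\phi_{j,s}^l,d\phi_{i,b}^l\rangle\,(g_i^l)_{ba},
\end{equation}
are precisely the paper's entire proof: the ``simple calculation'' alluded to there is this identity, after which one only needs the routine closure properties of weakly $\alpha$-H$\ddot{\mathrm{o}}$lder functions under products, under matrix inversion on pieces where the determinant is bounded below, and under countable refinement of the Borel decompositions. Your observation that the second assertion is then formal is also correct. The problem is the second half of your plan. Having reduced everything to the cross inner products $\langle d\phi_{j,s}^l,d\phi_{i,b}^l\rangle$, you decline to treat them as data and instead propose to recover $A=J(\Phi_{ij}^l)$ from the identity $g_i^l=A^t(g_j^l\circ\Phi_{ij}^l)A$ alone by a Liouville/Myers--Steenrod type rigidity argument. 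That step would fail. The sets $\phi_i^l(C_i^l\cap C_j^l)$ are arbitrary Borel subsets of $\mathbf{R}^l$, typically with empty interior and no connectedness; the ``Jacobian'' of $\Phi_{ij}^l$ is only an approximate differential defined a.e.\ via Lipschitz extension as in Section $3.1$, and on such a set it satisfies no exactness or curl-free constraint that a rigidity argument could exploit. The pointwise orthogonal ambiguity you correctly identify is therefore irreducible from the metric identity alone in this generality, and the Lusin localization you invoke destroys whatever connectivity any regularity-of-isometries theorem would require. So the declared ``heart of the proof'' is a statement that is unavailable here, and it is in any case only named, not proved.

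The intended resolution is that the cross terms are supplied, not derived. In every application of Proposition \ref{we2nd} in the paper, the weak H$\ddot{\mathrm{o}}$lder (resp.\ Lipschitz) continuity of $\langle d\phi_{j,s}^l,d\phi_{i,b}^l\rangle$ for coordinate functions taken from \emph{different} charts is exactly what has been established beforehand: Corollary \ref{hold} controls $\langle dr_p,dr_q\rangle$ for arbitrary pairs of points, which feeds Corollary \ref{holder}, and Corollary \ref{sob} together with polarization controls $\langle df_{\infty},dh_{\infty}\rangle$ for arbitrary pairs of limits of functions with $L^2$-bounded Laplacian, which feeds Theorem \ref{harm}. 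Read this way, the hypothesis that $g$ is a weakly $\alpha$-H$\ddot{\mathrm{o}}$lder Riemannian metric with respect to $\{(C_i^l,\phi_i^l)\}_{l,i}$ amounts to weak H$\ddot{\mathrm{o}}$lder continuity of the canonical pairing on all pairs of coordinate differentials, not merely of the diagonal Gram blocks $g_i^l$; with that reading your own formula closes the argument and no rigidity theorem is needed. You deserve credit for noticing that the literal hypothesis, taken at face value, does not pin down the cross terms---but the correct response is to use the stronger input that the applications actually provide, not to manufacture it by an isometry-regularity argument that has no footing on Borel sets.
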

\begin{proof}Since $g_i^l$ is weakly $\alpha$-H$\ddot{\mathrm{o}}$lder continuous, by simple calculation, we see that every map $J(\Phi_{ij}^l)$ is weakly $\alpha$-H$\ddot{\mathrm{o}}$lder continuous on 
$\phi_i^l(C_i^l \cap C_j^l)$.
Therefore we have the proposition.
\end{proof}
Assume that $g$ is a weakly Lipschitz Riemannian metric on $X$ with respect to $\{(C_i^l, \phi_i^l)\}_{l, i}$.
The following is a direct consequence of Proposition \ref{levi} and Corollary \ref{levi2}:
\begin{theorem}[Levi-Civita connection]\label{levi3}
There exists the Levi-Civita connection $\nabla^g$ on $X$ defined uniquely in the following sense:  
\begin{enumerate}
\item $\nabla^g$ is a map from $\Gamma_{\mathrm{Bor}} (TX) \times \Gamma_{1} (TX)$ to $\Gamma_{\mathrm{Bor}} (TX)$.
\item $\nabla^g_U(V+W)=\nabla^g_UV + \nabla^g_U W$ for every $U \in \Gamma_{\mathrm{Bor}} (TX)$ and every $V, W \in \Gamma_{1} (TX)$.
\item $\nabla^g_{fU+hV}W =f\nabla^g_UW + h\nabla^g_VW$ for every $U, V \in \Gamma_{\mathrm{Bor}} (TX)$, every $W \in \Gamma_{1} (TX)$ and every Borel functions $f, h$ on $X$.
\item $\nabla^g_U(fV)=U(f)V + f\nabla^g_UV$ for every $U \in \Gamma_{\mathrm{Bor}} (TX)$, every $V \in \Gamma_{1} (TX)$ and every weakly Lipschitz function $f$ on $X$.
\item $\nabla^g_UV - \nabla^g_VU=[U, V]$ for every $U, V \in \Gamma_{1} (TX)$.
\item $Ug(V, W) = g( \nabla^g_UV, W) + g(V, \nabla^g_UW)$ for every $U \in \Gamma_{\mathrm{Bor}} (TX)$ and every $V, W \in \Gamma_{1} (TX)$.   
\end{enumerate}
\end{theorem}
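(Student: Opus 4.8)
The plan is to construct $\nabla^g$ chart by chart using Proposition \ref{levi} and then verify that the local pieces glue into a single global object via Corollary \ref{levi2}. First I would invoke Proposition \ref{we2nd}: since $g$ is weakly Lipschitz with respect to $\{(C_i^l,\phi_i^l)\}_{l,i}$, the space $(X,\upsilon)$ has a weakly second order differential structure with respect to the same system, so every transition Jacobian $J(\Phi_{ij}^l)$ is weakly Lipschitz. On a single chart we have $g_i^l\in\mathrm{Riem}_1(\phi_i^l(C_i^l))$, so Proposition \ref{levi} furnishes a unique local Levi-Civita connection $\nabla^{g_i^l}$ on $\phi_i^l(C_i^l)$. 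I would then define $\nabla^g$ by prescribing its representative in each chart,
\[
(\nabla^g_U V)|_{C_i^l}=\nabla^{g_i^l}_{U_i^l}V_i^l,
\]
where $U_i^l,V_i^l$ denote the representatives of $U\in\Gamma_{\mathrm{Bor}}(TX)$ and $V\in\Gamma_1(TX)$ on $\phi_i^l(C_i^l)$.

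The key step is to check that this prescription is independent of the chart, i.e.\ that it is well defined on overlaps. On $C_i^l\cap C_j^l$ with $\upsilon(C_i^l\cap C_j^l)>0$, the transition map $G=\Phi_{ij}^l$ is bi-Lipschitz with weakly Lipschitz Jacobian, and the compatibility of $g$ as a Borel Riemannian metric on $X$ gives $G_*g_i^l=g_j^l$. Corollary \ref{levi2}, applied with this $G$ and $g=g_i^l$, then yields $G_*(\nabla^{g_i^l}_{U_i^l}V_i^l)=\nabla^{G_*g_i^l}_{G_*U_i^l}G_*V_i^l=\nabla^{g_j^l}_{U_j^l}V_j^l$, which is exactly the statement that the two local prescriptions agree after transport by $G$. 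Hence the local connections patch into a single Borel vector field on $X$, defining $\nabla^g$ globally; overlaps between charts of different dimension $l$ are $\upsilon$-null by Ahlfors regularity and so impose no constraint.

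Properties $(1)$--$(6)$ are local identities between Borel vector fields, so each one can be verified on every $C_i^l$ after passing to the chart $\phi_i^l$, where it reduces to the corresponding property of $\nabla^{g_i^l}$ established in Proposition \ref{levi}. For uniqueness, I would argue that any $\tilde\nabla$ satisfying $(1)$--$(6)$ restricts through each $\phi_i^l$ to a map on $\phi_i^l(C_i^l)$ satisfying the hypotheses of Proposition \ref{levi} for $g_i^l$; by the uniqueness clause there it must coincide with $\nabla^{g_i^l}$, and since the $C_i^l$ cover $X$ up to a $\upsilon$-null set, $\tilde\nabla=\nabla^g$.

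The hard part will be precisely the well-definedness on overlaps: one must be certain that the weakly Lipschitz regularity of $g$ genuinely transfers to weakly Lipschitz regularity of the transition Jacobians (so that Corollary \ref{levi2} is applicable) and that the pushforward identity $G_*g_i^l=g_j^l$ really does intertwine the two locally defined connections. Once that compatibility is in place, everything else is a chart-wise transcription of Proposition \ref{levi}.
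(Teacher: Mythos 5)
Your argument is exactly the one the paper intends: it states that Theorem \ref{levi3} is a direct consequence of Proposition \ref{levi} (local existence and uniqueness in each chart) and Corollary \ref{levi2} (naturality under the transition maps, whose Jacobians are weakly Lipschitz by Proposition \ref{we2nd}), which is precisely your chart-by-chart construction, gluing via $G_*(\nabla^{g_i^l}_X Y)=\nabla^{G_*g_i^l}_{G_*X}G_*Y$, and chart-wise uniqueness. The proposal is correct and fills in the same details the paper leaves implicit.
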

The following is a direct consequence of Corollary \ref{sym}:
\begin{proposition}\label{hess2}
Let $A$ be a Borel subset of $X$, $f$ a weakly twice differentiable function on $A$, $\omega \in \Gamma_1(T^*A)$ and $Y \in \Gamma_{1}(TA)$.
Then there exist uniquely
\begin{enumerate}
\item $\nabla^g \omega \in \Gamma_{\mathrm{Bor}}(T^*A \otimes T^*A)$ satisfying that $\nabla^g \omega|_{C_i^l \cap A}=\nabla^{g_i^l}(\omega|_{C_i^l\cap A})$,
\item the Hessian $\mathrm{Hess}^g_f \in \Gamma_{\mathrm{Bor}}(T^*A \otimes T^*A)$ satisfying that $\mathrm{Hess}^g_f|_{C_i^l \cap A}=\mathrm{Hess}_{f \circ \phi_i^l}^{g_i^l}$,
\item a Borel function  $\mathrm{div}^g\,Y$ (called the divergence of $Y$) on $A$ satisfying that $\mathrm{div}^g\,Y(x) = \mathrm{div}^{g_i^l}\,(Y|_{C_i^l})(\phi_i^l(x))$
for a.e. $x \in \phi_i^l(A \cap C_i^l)$,
\item a Borel function $\Delta^g f$ on $A$ satisfying that  $\Delta^g f(x) = \Delta^{g_i^l} (f \circ (\phi_i^l)^{-1})(\phi_i^l(x))$
for a.e. $x \in \phi_i^l(A \cap C_i^l)$.
\end{enumerate}
Moreover we have the following:
\begin{itemize}
\item[(a)] $\mathrm{Hess}^g_f(x)$ is symmetric for a.e. $x \in A$.
\item[(b)] $\mathrm{div}^g\,(h(\nabla^g f))=-h\Delta^g f +g(\nabla^g f, \nabla^g h)$ for every weakly twice differentiable function $h$ on $A$. 
\item[(c)] $\Delta^g (fh)=h\Delta^g f-2g(\nabla^g f, \nabla^g h)+f\Delta^g h$ for every weakly twice differentiable function $h$ on $A$.
\end{itemize}
\end{proposition}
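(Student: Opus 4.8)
The plan is to define each of the four objects chart-by-chart, via the corresponding construction in Definition \ref{fund4} applied to the local coordinate representation of the data, and then to verify that these local definitions agree on overlaps so that they assemble into globally well-defined Borel sections on $A$. The point that makes the gluing possible is that the transition maps $\Phi_{ij}^l = \phi_j^l \circ (\phi_i^l)^{-1}$ are bi-Lipschitz and weakly twice differentiable: since $g$ is a weakly Lipschitz Riemannian metric on $X$, Proposition \ref{we2nd} (with $\alpha = 1$) shows that $(X, \upsilon)$ has a weakly $C^{1,1}$-structure with respect to $\{(C_i^l, \phi_i^l)\}_{l,i}$, that is, each $J(\Phi_{ij}^l)$ is weakly Lipschitz on $\phi_i^l(C_i^l \cap C_j^l)$. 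This is exactly the hypothesis under which Corollary \ref{sym} and Corollary \ref{levi2} apply with $G = \Phi_{ij}^l$.

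First I would set, on each chart, $\nabla^g\omega|_{C_i^l \cap A} := \nabla^{g_i^l}(\omega|_{C_i^l \cap A})$, $\mathrm{Hess}^g_f|_{C_i^l \cap A} := \mathrm{Hess}^{g_i^l}_{f \circ (\phi_i^l)^{-1}}$, and analogously for $\mathrm{div}^g\,Y$ and $\Delta^g f$, using Definition \ref{fund4}. To check consistency on $C_i^l \cap C_j^l \cap A$, I would invoke Corollary \ref{sym} with $G = \Phi_{ij}^l$: parts $(1)$, $(2)$ and $(3)$ furnish precisely the transformation laws $\nabla^{g_i^l}\omega_i^l = (\Phi_{ij}^l)^*(\nabla^{g_j^l}\omega_j^l)$, $\mathrm{Hess}^{g_i^l}_{f_i} = (\Phi_{ij}^l)^*(\mathrm{Hess}^{g_j^l}_{f_j})$, and the corresponding identity for the divergence, where $\omega_j^l$ and $f_j = f_i \circ (\Phi_{ij}^l)^{-1}$ are the representations in the $j$-th chart. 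These are exactly the cocycle (transformation) rules required in Definition \ref{pppo} for a $(0,2)$-tensor, a function, and so on, so the local pieces agree $\upsilon$-a.e. on overlaps and define unique global Borel sections. Uniqueness is immediate, since the value on each $C_i^l \cap A$ is prescribed and the charts cover $A$ up to a $\upsilon$-null set.

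It remains to verify $(a)$–$(c)$. Each is a pointwise, almost-everywhere assertion and hence local, so it suffices to check it on a single chart, where it reduces to the corresponding statement of Corollary \ref{sym}: $(a)$ is Corollary \ref{sym}$(4)$ (symmetry of the Hessian, itself a consequence of Proposition \ref{2nd}), $(b)$ is Corollary \ref{sym}$(5)$, and $(c)$ is Corollary \ref{sym}$(6)$. Since these hold on every $C_i^l \cap A$ and the global objects restrict to the local ones, the identities hold $\upsilon$-a.e. on all of $A$.

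The step I expect to require the most care — though the author rightly calls the conclusion \emph{direct} — is the bookkeeping of the transformation conventions: confirming that the pullback and pushforward formulas of Corollary \ref{sym} are genuinely the cocycle conditions needed for a well-defined global section, and that weak twice differentiability of the transition maps (not merely Lipschitzness) is available. The latter is supplied by Proposition \ref{we2nd}, and once it is in hand the gluing is purely formal.
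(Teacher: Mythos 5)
Your proposal is correct and follows the paper's route exactly: the paper simply declares Proposition \ref{hess2} to be a direct consequence of Corollary \ref{sym}, and your chart-by-chart construction, the gluing via the transformation laws in Corollary \ref{sym}(1)--(3) (justified by the weak Lipschitz continuity of $J(\Phi_{ij}^l)$ from Proposition \ref{we2nd}), and the reduction of (a)--(c) to Corollary \ref{sym}(4)--(6) are precisely the details the author leaves implicit. No gaps.
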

Finally we end this subsection by giving the definition of the canonical Riemannian metric on a rectifiable metric measure space:
\begin{definition}
Let $(\hat{X}, \hat{\upsilon})$ be a rectifiable metric measure space with respect to $\{(\hat{C}_i^l, \hat{\phi}^l_i)\}_{l,i}$, and
 $\{\langle \cdot, \cdot \rangle_w\}_w$ the canonical family of inner products $\langle \cdot, \cdot \rangle_w$ on $T_w^*\hat{X}$ as in Theorem \ref{29292}.
Define $\hat{g}=\{\hat{g}_i^l\}_{l, i} \in \Gamma_{\mathrm{Bor}}(T^*X \otimes T^*X)$ by $(\hat{g}_i^l)^{st}=\langle d\hat{\phi}_{i, s}^l, d\hat{\phi}_{i, t}^l\rangle$, where
$\hat{\phi}_i^l=(\hat{\phi}_{i,1}^l, \ldots, \hat{\phi}_{i, k}^l)$,
and call $\hat{g}=\{\hat{g}_i^l\}_{l, i} \in \Gamma_{\mathrm{Bor}}(T^*\hat{X} \otimes T^*\hat{X})$ the \textit{Riemannian metric of $(\hat{X}, \hat{\upsilon})$ with respect to $\{(\hat{C}_i^l, \hat{\phi}^l_i)\}_{l,i}$}.
\end{definition}
\begin{remark}
In \cite{ch1, ch-co3} by Cheeger and Cheeger-Colding refer to the family $\{\langle \cdot, \cdot \rangle_w\}_w$ as the Riemannian metric of $(\hat{X}, \hat{\upsilon})$.
\end{remark}
\section{Ricci limit spaces}
In this section, we will give the proofs of Theorems \ref{angle} and \ref{limit}.
Let $(M_{\infty}, m_{\infty})$ be a Ricci limit space.
Cheeger-Colding showed that any two limit measures $\upsilon_1, \upsilon_2$ on $M_{\infty}$ are mutually absolutely continuous.
See \cite[Theorem $4.17$]{ch-co3}.
Therefore, for instance, note that the notion of weak H$\ddot{\mathrm{o}}$lder continuity for functions on $M_{\infty}$ does not depend on the choice of the limit measures. 
\subsection{Angles, its weak H$\ddot{\mathbf{o}}$lder continuity, and bi-Lipschitz embedding}
In this subsection, we will give a proof of Theorem \ref{angle} and discuss the weak H$\ddot{\mathrm{o}}$lder continuity of angles.
The proof of the following proposition is based on the proof of Cheeger-Colding's splitting theorem \cite[Theorem $6.64$]{ch-co}:
\begin{proposition}\label{hol}
For every $\epsilon>0$, there exists $\delta=\delta(\epsilon, n)>0$ such that the following property holds:
Let $M$ be an $n$-dimensional complete Riemannian manifold with $\mathrm{Ric}_M \ge -\epsilon ^2(n-1)$, and $m, p_1, p_2, q_1, q_2 \in M$.
Assume that $\overline{p_i, m} \ge \epsilon ^{-1}, \overline{q_i, m} \ge \epsilon ^{-1}$ and $\overline{m, p_i} + \overline{m, q_i}- \overline{p_i, q_i} \le \delta$ for $i=1,2$.
Then we have 
\begin{equation}
\frac{1}{\mathrm{vol}\,B_1(m)}\int_{B_1(m)}\left| \langle dr_{p_1}, dr_{p_2} \rangle -\frac{1}{\mathrm{vol}\,B_1(m)}\int_{B_1(m)}\langle dr_{p_1}, dr_{p_2} \rangle d\mathrm{vol}\right| d\mathrm{vol} \le C(n) \epsilon^{\alpha (n)},
\end{equation}
where $0 < \alpha (n) <1$ and $C(n) \ge 1$ are constants depending only on $n$.
\end{proposition}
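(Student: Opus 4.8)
The plan is to follow the scheme of Cheeger-Colding's almost splitting theorem \cite[Theorem $6.64$]{ch-co}. The point is that, under the hypotheses, each distance function $r_{p_i}$ is close to an almost affine function, so its gradient $\nabla r_{p_i}$ is an almost parallel vector field on $B_1(m)$; hence $\langle dr_{p_1}, dr_{p_2}\rangle$ has almost vanishing gradient and must be $L^1$-close to its mean on $B_1(m)$. Fix $i$. Since $\overline{p_i, m} \ge \epsilon^{-1}$ and $\overline{q_i, m} \ge \epsilon^{-1}$, the Laplacian comparison theorem gives $\Delta r_{p_i} \le \Psi$ on $B_2(m)$ in the barrier sense, with $\Psi=\Psi(\epsilon, \delta\mid n)$ small (using $1/r \le \epsilon$ and $\coth (\epsilon r) \to 1$ for $r \ge \epsilon^{-1}$), while the Abresch-Gromov excess estimate keeps $\overline{p_i, x}+\overline{x, q_i}-\overline{p_i, q_i} \le \Psi$ throughout $B_2(m)$. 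Let $b_{p_i}$ be the harmonic function on $B_2(m)$ with $b_{p_i}=r_{p_i}$ on $\partial B_2(m)$. Following \cite{ch-co}, one obtains
\begin{align}
\sup_{B_{3/2}(m)}|\nabla b_{p_i}| &\le 1+\Psi, \\
\frac{1}{\mathrm{vol}\,B_1(m)}\int_{B_1(m)}\big||\nabla b_{p_i}|^2-1\big|\,d\mathrm{vol} &\le \Psi, \\
\frac{1}{\mathrm{vol}\,B_1(m)}\int_{B_1(m)}|\nabla (b_{p_i}-r_{p_i})|^2\,d\mathrm{vol} &\le \Psi.
\end{align}
The Bochner formula $\frac{1}{2}\Delta |\nabla b_{p_i}|^2=|\mathrm{Hess}\,b_{p_i}|^2+\mathrm{Ric}(\nabla b_{p_i}, \nabla b_{p_i})$, integrated against a cutoff which is $1$ on $B_1(m)$ and supported in $B_{3/2}(m)$, together with $\mathrm{Ric}\ge -\epsilon^2(n-1)$ and the two gradient estimates, then yields
\[
\frac{1}{\mathrm{vol}\,B_1(m)}\int_{B_1(m)}|\mathrm{Hess}\,b_{p_i}|^2\,d\mathrm{vol}\le \Psi.
\]
Tracking the quantitative (polynomial) dependence in the comparison and segment inequalities, and choosing $\delta=\delta(\epsilon, n)$ suitably, one arranges $\Psi\le C(n)\epsilon^{\alpha(n)}$ for some $0<\alpha(n)<1$.

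Next I would differentiate the inner product. For the harmonic functions $b_{p_1}, b_{p_2}$ one has, at a.e.\ point of $B_1(m)$, the identity whose $j$-th component reads $\nabla_{e_j}\langle \nabla b_{p_1}, \nabla b_{p_2}\rangle=\mathrm{Hess}\,b_{p_1}(e_j, \nabla b_{p_2})+\mathrm{Hess}\,b_{p_2}(e_j, \nabla b_{p_1})$, whence, by the gradient bound,
\[
\big|\nabla \langle db_{p_1}, db_{p_2}\rangle\big|\le (1+\Psi)\big(|\mathrm{Hess}\,b_{p_1}|+|\mathrm{Hess}\,b_{p_2}|\big)
\]
almost everywhere on $B_1(m)$. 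Combining this with Cauchy-Schwarz and the Hessian estimate gives
\[
\frac{1}{\mathrm{vol}\,B_1(m)}\int_{B_1(m)}\big|\nabla \langle db_{p_1}, db_{p_2}\rangle\big|\,d\mathrm{vol}\le C(n)\Psi^{1/2}.
\]

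Now I would invoke the $L^1$-Poincar\'e inequality on $B_1(m)$ (valid with a constant $C(n)$ since $\mathrm{Ric}\ge -(n-1)$): writing $u=\langle db_{p_1}, db_{p_2}\rangle$ and $\bar u$ for its average over $B_1(m)$,
\[
\frac{1}{\mathrm{vol}\,B_1(m)}\int_{B_1(m)}|u-\bar u|\,d\mathrm{vol}\le C(n)\,\frac{1}{\mathrm{vol}\,B_1(m)}\int_{B_1(m)}|\nabla u|\,d\mathrm{vol}\le C(n)\Psi^{1/2}.
\]
It remains to pass from $b_{p_i}$ back to $r_{p_i}$. Writing $\langle dr_{p_1}, dr_{p_2}\rangle-\langle db_{p_1}, db_{p_2}\rangle=\langle d(r_{p_1}-b_{p_1}), dr_{p_2}\rangle+\langle db_{p_1}, d(r_{p_2}-b_{p_2})\rangle$ and using $|\nabla r_{p_i}|=1$, the gradient bound on $b_{p_i}$, Cauchy-Schwarz, and the $L^2$-closeness estimate, one bounds the $L^1$-distance between $\langle dr_{p_1}, dr_{p_2}\rangle$ and $\langle db_{p_1}, db_{p_2}\rangle$ (and, in particular, the distance between their averages) by $C(n)\Psi^{1/2}$. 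Assembling these bounds through the triangle inequality controls the left-hand side of the proposition by $C(n)\Psi^{1/2}\le C(n)\epsilon^{\alpha(n)/2}$, and relabelling $\alpha(n)$ gives the stated conclusion.

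The hard part is the first step: reproducing the splitting-theorem analysis with an \emph{explicit} H\"older rate in $\epsilon$, rather than merely a qualitative error $\Psi(\epsilon, \delta\mid n)\to 0$. This demands careful quantitative bookkeeping of the Abresch-Gromov excess estimate, of the segment inequality used to control $\int_{B_1(m)}|\nabla(b_{p_i}-r_{p_i})|^2$, and of the cutoff integration of Bochner's formula, all made polynomial in $\epsilon$. A secondary technical point is the non-smoothness of $r_{p_i}$ along its cut locus, which is precisely why we replace $r_{p_i}$ by the harmonic function $b_{p_i}$ and use the a.e.\ validity of the Bochner identity.
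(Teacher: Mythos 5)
Your proposal is correct and follows essentially the same route as the paper: the paper likewise replaces each $r_{p_i}$ by a harmonic approximation $\mathbf{b}_i$ with quantitative $L^\infty$-closeness, $L^2$-gradient closeness and $L^2$-Hessian bounds of order $\epsilon^{\alpha(n)}$ (citing Cheeger--Colding's Lemmas $6.15$, $6.25$ and Proposition $6.60$ of \cite{ch-co}, whose proofs already carry the explicit H\"older rates you worry about), and then applies the Poincar\'e inequality to $\langle d\mathbf{b}_1, d\mathbf{b}_2\rangle$ together with the Hessian bound before passing back to the distance functions. Your write-up merely expands the cited Cheeger--Colding steps (Laplacian comparison, Abresch--Gromoll excess, Bochner formula with cutoff) that the paper compresses into a citation.
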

\begin{proof}
Cheeger-Colding's proof of \cite[Lemmas $6.15$, $6.25$ and Proposition $6.60$]{ch-co} yields that there exists $\delta=\delta(\epsilon, n)>0$ with the following properties:
Let $M, m,  p_1, p_2, q_1, q_2$ be as above.
Then for every $i=1,2$, there exists a harmonic function $\mathbf{b}_{i}$ on $B_{1}(m)$ such that $|r_{p_i}-\mathbf{b}_{i}|_{L^{\infty}(B_{1}(m))} \le C_1(n) \epsilon^{\alpha_1 (n)}$ and
\begin{equation}
\frac{1}{\mathrm{vol}\,B_1(m)}\int_{B_1(m)}\left(|dr_{p_i}-d\mathbf{b}_i|^2 + |\mathrm{Hess}_{\mathbf{b}_i}|^2\right)d\mathrm{vol}\le C_2(n)\epsilon ^{\alpha_2(n)}.
\end{equation}
Therefore, by the Poincar\'e inequality of type $(1,2)$ on $M$, we have 
\begin{align}
&\frac{1}{\mathrm{vol}\,B_1(m)}\int_{B_1(m)}\left| \langle d\mathbf{b}_1, d\mathbf{b}_2 \rangle -\frac{1}{\mathrm{vol}\,B_1(m)}\int_{B_1(m)}\langle d\mathbf{b}_1. d\mathbf{b}_2 \rangle d\mathrm{vol}\right| d\mathrm{vol} \\
&\le C(n)
\sqrt{\frac{1}{\mathrm{vol}\,B_1(m)}\int_{B_1(m)}\left(|\mathrm{Hess}_{\mathbf{b}_1}|^2+|\mathrm{Hess}_{\mathbf{b}_2}|^2\right)d\mathrm{vol}} \le C_3(n) \epsilon^{\alpha _3(n)}.
\end{align}
This completes the proof.
\end{proof}
\begin{remark}
These H$\ddot{\mathrm{o}}$lder estimates as above essentially follows from Laplacian comparison theorem and the following Abresch-Gromoll excess estimate: 
\begin{equation}
e(x) \le C(n)\epsilon^{\alpha(n)}
\end{equation}
for $M, m, p_1, p_2, q_1, q_2$ as in Proposition \ref{hol} and for every $x \in B_{100}(m)$, where $e(x)=\overline{p_1, x}+\overline{q_1, x} -\overline{p_1, q_2}$.
See \cite[Proposition $6.2$]{ch-co} by Cheeger-Colding (or \cite[Theorem $9.1$]{ch} by Cheeger) for the details.
We can also see a valuable survey about this excess estimate in subsection $1.7$ in \cite{co-na1} and \cite[Theorem $2.6$]{co-na1} by Colding-Naber for a new very useful excess estimate. 
\end{remark}
For every $p \in M_{\infty}$ and every $\tau>0$, put $\mathcal{D}^{\tau}_p=\{z \in M_{\infty};$ There exists $w \in M_{\infty}$ with $\overline{w, z}\ge \tau$ and $\overline{p, z}+\overline{z,w}=\overline{p,w}.\}$.
Note $C_p = M_{\infty} \setminus \bigcup_{\tau>0}\mathcal{D}^{\tau}_p$.
\begin{corollary}\label{hol1}
Let $\upsilon$ be a limit measure on $M_{\infty}$, $\beta>0$, $\tau >0$, $p, q \in M_{\infty}$ and $x \in \mathcal{D}_p^{\tau} \cap \mathcal{D}_q^{\tau} \setminus (B_{\beta}(p) \cup B_{\beta}(q))$.
Then we have 
\begin{equation}
\frac{1}{\upsilon (B_r(x))}\int_{B_r(x)}\left| \langle dr_p, dr_q \rangle -\frac{1}{\upsilon (B_r(x))}\int_{B_r(x)}\langle dr_p, dr_q \rangle d\upsilon \right| d\upsilon \le C(n) \max \left\{r, \frac{r}{\beta}, \frac{r}{\tau}\right\}^{\alpha (n)}
\end{equation}
for every $0 < r < \min \{\beta, \tau\}$.
\end{corollary}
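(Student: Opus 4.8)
The plan is to deduce the corollary from the manifold-level estimate of Proposition \ref{hol} by a rescaling argument combined with the passage to the limit furnished by the convergence theory of \cite{Ho}. The key structural observation is that $\langle dr_p, dr_q\rangle$ is a pointwise \emph{scale-invariant} quantity and that the left hand side is a \emph{normalized} average, hence also scale-invariant; consequently the whole inequality is unchanged under multiplying the metric by $r^{-2}$, after which $B_r(x)$ becomes a unit ball, which is precisely the setting of Proposition \ref{hol}. First I would realize the relevant configuration on the approximating manifolds $M_i$, apply Proposition \ref{hol} there, and then transfer the resulting inequality to $M_{\infty}$.

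In more detail: the hypotheses $x \in \mathcal{D}_p^{\tau}$ and $x \in \mathcal{D}_q^{\tau}$ produce points $w_p, w_q \in M_{\infty}$ with $\overline{w_p, x}, \overline{w_q, x} \ge \tau$ and \emph{vanishing} excess, $\overline{p, x}+\overline{x, w_p}=\overline{p, w_p}$ and $\overline{q, x}+\overline{x, w_q}=\overline{q, w_q}$. Choosing approximating points $x_i \to x$, $p_i \to p$, $q_i \to q$, $w_{p,i} \to w_p$ and $w_{q,i} \to w_q$ in $M_i$, the excesses of the pairs $(p_i, w_{p,i})$ and $(q_i, w_{q,i})$ at $x_i$ tend to $0$, while the four distances to $x_i$ are bounded below by essentially $\beta$ and $\tau$. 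I would rescale the metric of $M_i$ by $r^{-2}$, so that $\mathrm{Ric} \ge -C_0^2 r^2(n-1)$ and $B_r(x_i)$ becomes a unit ball, and set $\epsilon = C_0\max\{r, r/\beta, r/\tau\}$ for a suitable absolute constant $C_0$; this makes $\epsilon \ge r$ and yields the required lower bounds $\ge \epsilon^{-1}$ for the rescaled distances with room to spare, so that for all large $i$ the smallness hypothesis $\overline{x_i, p_i}+\overline{x_i, w_{p,i}}-\overline{p_i, w_{p,i}} \le \delta(\epsilon, n)$ (and its analogue for $q$) holds. Applying Proposition \ref{hol} with $m = x_i$, $(p_1, q_1)=(p_i, w_{p,i})$ and $(p_2, q_2)=(q_i, w_{q,i})$, observing $\langle dr_{p_1}, dr_{p_2}\rangle = \langle dr_{p_i}, dr_{q_i}\rangle$, and then undoing the rescaling, gives
\begin{equation}
\frac{1}{\mathrm{vol}\,B_r(x_i)}\int_{B_r(x_i)}\left|\langle dr_{p_i}, dr_{q_i}\rangle - a_i\right|d\mathrm{vol} \le C(n)\max\left\{r, \frac{r}{\beta}, \frac{r}{\tau}\right\}^{\alpha(n)}
\end{equation}
for all large $i$, where $a_i$ is the average of $\langle dr_{p_i}, dr_{q_i}\rangle$ over $B_r(x_i)$ and the factor $C_0^{\alpha(n)}$ has been absorbed into $C(n)$.

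Finally I would let $i \to \infty$. By the first of the three fundamental properties recalled at the end of Section $2$ we have $dr_{p_i}\to dr_p$ and $dr_{q_i}\to dr_q$ at $x$, and by the third such property, applied around $x_i$ with radius $r$ and with $F_i(s)=s$, the averages $a_i$ converge to $a_{\infty}=\upsilon(B_r(x))^{-1}\int_{B_r(x)}\langle dr_p, dr_q\rangle\,d\upsilon$; applying that property once more with the continuous functions $F_i(s)=|s-a_i|$, which converge uniformly on compact sets to $|s-a_{\infty}|$, the left hand sides above converge to $\upsilon(B_r(x))^{-1}\int_{B_r(x)}|\langle dr_p, dr_q\rangle - a_{\infty}|\,d\upsilon$, and the asserted bound follows. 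I expect the main obstacle to be this limit passage: one must verify that the approximating configuration degenerates to the hypotheses of Proposition \ref{hol} uniformly, and then carefully invoke the measured Gromov-Hausdorff convergence of the normalized integrals of inner products of differentials from \cite{Ho}, localized to the balls $B_r(x_i)$. By contrast, the geometric input — the zero-excess choice of witnesses $w_p, w_q$ and the scale invariance of the quantity — is essentially routine.
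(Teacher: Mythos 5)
Your proposal is correct and follows essentially the same route as the paper: approximate the configuration on the manifolds $M_i$, rescale the metric by $r^{-1}$ so that $B_r(x_i)$ becomes a unit ball, apply Proposition \ref{hol}, and pass to the limit via the convergence properties $(1)$ and $(3)$ of the differentials of distance functions. The only difference is that you spell out what the paper leaves implicit, namely the verification of the hypotheses of Proposition \ref{hol} using the zero-excess witnesses $w_p, w_q$ supplied by $x \in \mathcal{D}_p^{\tau} \cap \mathcal{D}_q^{\tau}$ and the choice $\epsilon = C_0\max\{r, r/\beta, r/\tau\}$.
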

\begin{proof}
Let $\{(M_i, m_i)\}_i$ be a sequence of pointed $n$-dimensional complete Riemannian manifolds with $\mathrm{Ric}_{M_i} \ge -(n-1)$ such that $(M_i, m_i, \mathrm{vol}/\mathrm{vol}\,B_1(m_i)) \to (M_{\infty}, m_{\infty}, \upsilon)$. 
Fix sequences $p_i, q_i, x_i \in M_i$ with $p_i \to p, q_i \to q$ and $x_i \to x$.
By considering the rescaled metric $r^{-1}d_{M_i}$, it follows from Proposition \ref{hol} that
\begin{equation}
\frac{1}{\mathrm{vol}\,B_r(x_i)}\int_{B_r(x_i)}\left| \langle dr_{p_i}, dr_{q_i} \rangle -\frac{1}{\mathrm{vol}\,B_r(x_i)}\int_{B_r(x_i)}\langle dr_{p_i}. dr_{q_i} \rangle d\mathrm{vol}\right| d\mathrm{vol} \le C(n) \max \left\{r, \frac{r}{\beta}, \frac{r}{\tau}\right\}^{\alpha (n)}
\end{equation}
for every sufficiently large $i$.
By the property $(1)$ in page $8$, since $dr_{p_i} \to dr_p, dr_{q_i} \to dr_q$ and $dr_{x_i} \to dr_x$ on $M_{\infty}$, by letting $i \to \infty$ and the property $(3)$ in page $8$, we have the corollary.
\end{proof}
Let $\upsilon$ be a limit measure on $M_{\infty}$ and $p, q, x \in M_{\infty}$ with $x \in M_{\infty} \setminus (C_p \cup C_q \cup \{p\} \cup \{q\})$.
It follows directly from Corollary \ref{hol1} that the limit
\begin{equation}
\lim_{r \to 0} \frac{1}{\upsilon ( B_r(x))}\int_{B_r(x)}\langle dr_p, dr_q\rangle d\upsilon
\end{equation}
exists.
Define the angle $\angle^{\upsilon} pxq$ of $pxq$ with respect to $\upsilon$ by
\begin{equation}
\angle^{\upsilon} pxq = \arccos \left(\lim_{r \to 0} \frac{1}{\upsilon ( B_r(x))}\int_{B_r(x)}\langle dr_p, dr_q\rangle d\upsilon \right).
\end{equation}
\begin{theorem}\label{well3}
We have 
\begin{equation}
\cos \angle^{\upsilon}pxq= \lim_{t \to 0}\frac{2t^2-\overline{\gamma_p(t), \gamma_q(t)}^2}{2t^2}
\end{equation}
for any minimal geodesics $\gamma_p$ from $x$ to $p$, and $\gamma_q$ from $x$ to $q$.
In particular, we have Theorem \ref{angle}.
\end{theorem}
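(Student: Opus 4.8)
The plan is to show that the geometric limit
\[ R:=\lim_{t\to 0}\frac{2t^2-\overline{\gamma_p(t),\gamma_q(t)}^2}{2t^2} \]
exists and coincides with $\cos\angle^{\upsilon}pxq=L:=\lim_{r\to 0}\upsilon(B_r(x))^{-1}\int_{B_r(x)}\langle dr_p,dr_q\rangle\,d\upsilon$, the existence of $L$ being already guaranteed by Corollary \ref{hol1}. Since $x\notin C_p\cup C_q$, I would first fix $\tau>0$ with $x\in\mathcal{D}_p^{\tau}\cap\mathcal{D}_q^{\tau}$, so that Corollary \ref{hol1} applies at every small scale and bounds the oscillation of $\langle dr_p,dr_q\rangle$ on $B_r(x)$ by $C(n)r^{\alpha(n)}$. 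It therefore suffices to compare, for each small $t$, the geometric quantity $(2t^2-\overline{\gamma_p(t),\gamma_q(t)}^2)/(2t^2)$ with the ball average $\upsilon(B_t(x))^{-1}\int_{B_t(x)}\langle dr_p,dr_q\rangle\,d\upsilon$ and to show that their difference tends to $0$ as $t\to 0$.

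Next I would pass to the approximating manifolds. Choosing $p_i\to p$, $q_i\to q$, $x_i\to x$ and minimal geodesics $\gamma_{p_i}$ from $x_i$ to $p_i$, $\gamma_{q_i}$ from $x_i$ to $q_i$ converging uniformly to $\gamma_p,\gamma_q$, the convergence of distances gives $\overline{\gamma_p(t),\gamma_q(t)}=\lim_i\overline{\gamma_{p_i}(t),\gamma_{q_i}(t)}$ for each fixed $t$. On the smooth manifold $M_i$ the function $H_i(s,s')=\overline{\gamma_{p_i}(s),\gamma_{q_i}(s')}^2$ is smooth near the origin, and since $H_i(s,0)=s^2$ and $H_i(0,s')=s'^2$, the fundamental theorem of calculus gives the exact identity
\[ \frac{2t^2-\overline{\gamma_{p_i}(t),\gamma_{q_i}(t)}^2}{2t^2}=-\frac{1}{2t^2}\int_0^t\int_0^t\frac{\partial^2 H_i}{\partial s\,\partial s'}(s,s')\,ds\,ds'. \]
By the first and second variation of arc length one has $\partial^2 H_i/\partial s\,\partial s'=-2\langle\nabla r_{p_i},\nabla r_{q_i}\rangle+E_i$, where the error $E_i$ is governed by the Hessians of the distance functions and vanishes in the Euclidean split model.

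The crucial step is then to invoke Proposition \ref{hol}. After rescaling the metric of $M_i$ by $t^{-1}$, the hypotheses of that proposition hold with a parameter $\epsilon\to 0$ as $t\to 0$ (the excess being small precisely because $x\in\mathcal{D}_p^{\tau}\cap\mathcal{D}_q^{\tau}$), so $r_{p_i}$ and $r_{q_i}$ are $L^{\infty}$- and $L^2$-gradient-close on $B_t(x_i)$ to harmonic functions $\mathbf{b}_{1,i},\mathbf{b}_{2,i}$ with $L^2$-small Hessians. Since $\nabla\langle\nabla\mathbf{b}_{1,i},\nabla\mathbf{b}_{2,i}\rangle$ is controlled by these Hessians, $\langle\nabla r_{p_i},\nabla r_{q_i}\rangle$ is $L^1$-close on $B_t(x_i)$ to its average $\bar u_i$, and the contribution of $E_i$ is small; hence the iterated integral equals $-2\bar u_i t^2$ up to an error of size $C(n)t^{\alpha(n)}+o_i(1)$. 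Letting $i\to\infty$ and using the convergence properties (1) and (3) stated at the end of Section 2 (namely $dr_{p_i}\to dr_p$, $dr_{q_i}\to dr_q$ and the convergence of averaged inner products), the average $\bar u_i$ converges to $\upsilon(B_t(x))^{-1}\int_{B_t(x)}\langle dr_p,dr_q\rangle\,d\upsilon$. Thus the geometric quantity lies within $C(n)t^{\alpha(n)}$ of this limit-space average, and letting $t\to 0$ both converge to $L$, which yields $R=L$ and hence Theorem \ref{angle}.

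I expect the main obstacle to be this third step: converting the smooth comparison formula into the gradient inner product with a quantitatively controlled error. This is exactly where the proof of Cheeger-Colding's splitting theorem enters through Proposition \ref{hol}. The delicate point is that the $L^1$-smallness of the oscillation of $\langle\nabla r_{p_i},\nabla r_{q_i}\rangle$ over the ball $B_t(x_i)$ must be transferred to control of $\int_0^t\int_0^t\partial^2 H_i/\partial s\,\partial s'$, which is concentrated near the two specific geodesics. Dominating these curve integrals by the ball average, and handling the second-variation error $E_i$ under only a lower Ricci bound, requires the closeness of the geodesics to the gradient flow lines of the almost-linear approximations $\mathbf{b}_{j,i}$ together with a segment-type inequality, and this is the technical heart of the argument.
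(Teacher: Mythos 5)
Your overall strategy --- compare the comparison quantity $(2t^2-\overline{\gamma_p(t),\gamma_q(t)}^2)/(2t^2)$ directly with the ball average $\upsilon(B_t(x))^{-1}\int_{B_t(x)}\langle dr_p,dr_q\rangle\,d\upsilon$ at each scale $t$, using a second-variation identity on the approximating manifolds --- is different from the paper's, and it has a genuine gap exactly at the step you flag as the ``technical heart.'' The identity $2t^2-H_i(t,t)=-\int_0^t\int_0^t\partial_s\partial_{s'}H_i$ is by itself vacuous (it is just a restatement of the quantity to be estimated), so all the content must come from controlling $\partial_s\partial_{s'}H_i$ along the two \emph{specific} geodesics. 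The tools you invoke cannot do this: Proposition \ref{hol} gives $L^1$-control of the oscillation of $\langle dr_{p_i},dr_{q_i}\rangle$ over the \emph{ball} $B_t(x_i)$, and the segment inequality controls integrals along geodesics only after averaging over their endpoints; neither says anything about a fixed pair of geodesics, which form a measure-zero set. Moreover, your error term $E_i$ involves mixed second derivatives of the two-variable distance function (Jacobi-field quantities), and under only a lower Ricci bound the Hessian of a distance function admits no two-sided bound --- the $L^2$ Hessian estimate in Proposition \ref{hol} applies to the harmonic replacements $\mathbf{b}_i$, not to $r_{p_i}$ itself. The Colding--Naber example (Theorem \ref{cn}) shows this is not a technicality: ball averages can behave perfectly well while the comparison quantity along two fixed geodesics oscillates through every value of $[0,\pi]$ along a sequence of scales, so no argument that only uses scale-by-scale integral estimates over balls can close the gap.

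The paper takes a different route that sidesteps this entirely. It rescales by $r_i^{-1}$ and passes to a tangent cone $(Y,y)$ at $x$; the hypothesis $x\notin C_p\cup C_q$ is used not merely to make the excess small but to guarantee that $\gamma_p,\gamma_q$ converge to \emph{lines} $l_p,l_q$ of $Y$. The splitting theorem then forces the Busemann functions $b_{l_p},b_{l_q}$ to be exactly linear coordinates on a Euclidean factor, so $\langle db_{l_p},db_{l_q}\rangle$ is \emph{constant} on $Y$ and equals the cosine of the angle between the lines, which in turn equals $\lim_i(2r_i^2-\overline{\gamma_p(r_i),\gamma_q(r_i)}^2)/(2r_i^2)$ by pure Gromov--Hausdorff distance convergence --- no differentiation along the geodesics is ever needed. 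The analytic side is matched via Claim \ref{bus} ($db_p^i\to db_{l_p}$ under the rescaled convergence, proved with the harmonic approximations of \cite[Lemmas 6.15, 6.25]{ch-co} and the convergence properties (1)--(3) of Section 2), and Gromov compactness upgrades subsequential convergence to full convergence since the limit $\cos\angle^{\upsilon}pxq$ is independent of the subsequence. If you want to keep your quantitative framework, you would essentially have to reprove this rigidity statement scale by scale, which is what the blow-up argument packages for you.
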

\begin{proof}
Recall that a pointed proper geodesic space $(Y, y)$ is said to be a \textit{tangent cone of $M_{\infty}$ at $x \in M_{\infty}$} if there exists a sequence of positive numbers $\{r_i\}_i$ such that $r_i \to 0$ and $(M_{\infty}, r_i^{-1}d_{M_{\infty}}, x) \to (Y, y)$. 
Fix 
\begin{enumerate}
\item a sequence of positive numbers $\{r_i\}_i$ with $r_i \to 0$,
\item a tangent cone $(Y, y)$ of $M_{\infty}$ at $x$, and a Radon measure $\upsilon_Y$ on $Y$  
 satisfying that $(M_{\infty}, r^{-1}_id_{M_{\infty}}, x, \upsilon_i) \to (Y, y, \upsilon_Y)$, where $\upsilon_i=\upsilon / \upsilon (B_{r_i}(x))$,
\item two geodesics $\gamma_p, \gamma_q$ on $M_{\infty}$ beginning at $p$, $q$, respectively, such that $x$ is an interior point of both $\gamma_p$ and $\gamma_q$.
\end{enumerate}
Then it is easy to check that there exist lines $l_p, l_q$ of $Y$ such that $y \in \mathrm{Image}(l_p) \cap \mathrm{Image}(l_q)$ and that $(\gamma_p, r_i^{-1}d_Y) \to l_p$  and $(\gamma_q, r_i^{-1}d_Y) \to l_q$ with respect to the Gromov-Hausdorff topology (recall that a map $l: \mathbf{R} \to Y$ is said to be a \textit{line of $Y$} if $l$ is an isometric embedding).
\begin{claim}\label{bus}
Let $b^i_p=r_i^{-1}d_{M_{\infty}}(p, \cdot)-r_i^{-1}d_{M_{\infty}}(p, x)$.
Then $(b^i_p, db^i_p) \to (b_{l_p}, db_{l_p})$ on $Y$ with respect to the convergence $(M_{\infty}, r^{-1}_id_{M_{\infty}}, x, \upsilon_i) \to (Y, y, \upsilon_Y)$, where $b_{l_p}$ is the Busemann function of $l_p$.
\end{claim}
The proof is as follows.
It is easy to check $b_p^i \to b_{l_p}$ on $Y$.
Let $R>0$, $x_j, p_j \in M_j$ with $x_j \to x, p_j \to p$ and let $b^{i, j}=r_i^{-1}d_{M_{j}}(p_j, \cdot)-r_i^{-1}d_{M_{j}}(p_j, x_j)$.
\cite[Lemmas $6.15$ and $6.25$]{ch-co} by Cheeger-Colding yields that there exists 
a sequence $\{\mathbf{b}^{i, j}\}_{i<\infty, j < \infty}$ of $C(n)$-Lipschitz harmonic functions $\mathbf{b}^{i, j}$ on $B_R^{r_i^{-1}d_{M_j}}(x_{j})$ such  that for every $i$ there exists $i_0$ such that $||\mathbf{b}^{i, j}- b^{i, j}||_{L^{\infty}(B_R^{r_i^{-1}d_{M_j}}(x_j))} +||d\mathbf{b}^{i, j}- db^{i, j}||_{L^{2}(B_R^{r_i^{-1}d_{M_j}}(x_j))} \le \Psi(r_i^{-1}; n, R)$
for every $j \ge i_0$.
Without loss of generality we can assume that for every $i$ there exists a $C(n)$-Lipschitz function $\mathbf{b}^{i, \infty}$ on $B_R^{r_i^{-1}d_{M_{\infty}}}(x)$ such that $\mathbf{b}^{i, j} \to \mathbf{b}^{i, \infty}$ on $B_R^{r_i^{-1}d_{M_{\infty}}}(x)$.
Note that the property $(2)$ in page $8$ yields $d\mathbf{b}^{i, j} \to d\mathbf{b}^{i, \infty}$ on $B_R^{r_i^{-1}d_{M_{\infty}}}(x)$.
In particular we have $||\mathbf{b}^{i, \infty}- b^{i}_p||_{L^{\infty}(B_R^{r_i^{-1}d_{M_{\infty}}}(x))} +||d\mathbf{b}^{i, \infty}- db^{i}_p||_{L^{2}(B_R^{r_i^{-1}d_{M_{\infty}}}(x))} \le \Psi(r_i^{-1}; n, R)$.
Thus $\mathbf{b}^{i, \infty} \to b_{l_p}$ on $B_R(y)$.
Since there exists a subsequence $\{j(i)\}_i$ such that $\mathbf{b}^{i, j(i)} \to b_{l_p}$ on $B_R(y)$ with respect to the convergence $(M_{j(i)}, x_{j(i)}, r_{i}^{-1}d_{M_{j(i)}}) \to (Y, y)$, 
applying the property $(2)$ in page $8$ again yields $d\mathbf{b}^{i, \infty} \to db_{l_p}$ on $B_R(y)$.
Thus we have $db^i_p \to db_{l_p}$ on $B_R(y)$.
Since $R$ is arbitrary, we have Claim \ref{bus}.

Therefore we have 
\begin{align}
\cos \angle^{\upsilon} pxq&= \lim_{i \to \infty} \frac{1}{\upsilon(B_{r_i}(x))}\int_{B_{r_i}(x)}\langle dr_{p}, dr_q\rangle d\upsilon \\
&=\lim_{i \to \infty} \frac{1}{\upsilon_i(B_{1}^{r_i^{-1}d_{M_{\infty}}}(x))}\int_{B_{1}^{r_i^{-1}d_{M_{\infty}}}(x)}\langle db^i_{p}, db^i_{q}\rangle d\upsilon_i \\
&=\frac{1}{\upsilon_Y(B_1(y))}\int_{B_1(y)}\langle db_{l_p}, db_{l_q}\rangle d\upsilon_Y \\
&=\cos (\mathrm{the \,angle\,between\,}l_p\,\mathrm{and}\,l_q)\\
&=\lim_{i \to \infty}\frac{2r_i^2-\overline{\gamma_p(r_i), \gamma_q(r_i)}^2}{2r_i^2}.
\end{align}
Thus Gromov's compactness theorem yields the theorem.
\end{proof}
Since $\angle^{\upsilon}pxq$ is independent of $\upsilon$, we set $\angle pxq=\angle^{\upsilon}pxq$ and call it \textit{the angle of $pxq$}.  
\begin{remark}
By the proof of Theorem \ref{well3} and the property of $(3)$ in page $8$, we have 
\begin{equation}
\lim_{\epsilon \to 0}\sup_{s, t \in [\tau_1, \tau_2]}\left|\cos \angle pxq - \frac{(s\epsilon)^2+(t\epsilon)^2-\overline{\gamma_p(s\epsilon), \gamma_q(t\epsilon)}^2}{2st\epsilon^2}\right|=0
\end{equation}
for every $0<\tau_1<\tau_2<\infty$ and
\begin{equation}
\lim_{r \to 0}\frac{1}{\upsilon(B_r(x))}\int_{B_r(x)}|\cos \angle pzq-\cos \angle pxq|d\upsilon (z)=0.
\end{equation}
\end{remark}
\begin{remark}
Let $p, q \in M_{\infty}$.
Then \cite[Theorem $3.3$]{Ho} yields that there exists $A \subset M_{\infty}$ such that $\upsilon (M_{\infty}\setminus A)=0$ and that
\begin{equation}
\langle dr_p, dr_q \rangle (x)= \lim_{\delta \to 0}\frac{\overline{q, \gamma_p(\overline{x, p}+\delta)}- \overline{q, x}}{\delta}
\end{equation}
for every $x \in A$ and every $\gamma_p$.
Thus Theorem \ref{well3} and Lebesgue's differentiation theorem yield the following \textit{almost first variation formula for distance function} (in some weak sense):
\begin{equation}
\overline{q, \gamma_p(\overline{x, p}+\delta)}=\overline{q, x} + \delta \cos \angle pxq + o(\delta)
\end{equation}
for a.e. $x \in A$ and every $\gamma_p$, $\gamma_q$.
\end{remark}
The next corollary is a direct consequence of Corollary \ref{hol1} and \cite[Theorem $3.2$]{ho}:
\begin{corollary}[Weak H$\ddot{\mathrm{o}}$lder continuity of angles]\label{hold}
Let $\tau>0, R>1$, $p, q, x \in M_{\infty}$ with $p, q \in B_R(x) \setminus B_{R^{-1}}(x)$ and $x \in \mathcal{D}_p^{\tau}\cap \mathcal{D}_q^{\tau}$.
Then there exists $r=r(n, \tau, R)>0$ such that a function $\Phi(z) = \cos \angle pzq$ is $\alpha (n)$-H$\ddot{o}$lder continuous on $B_r(x) \cap \mathcal{D}_p^{\tau}\cap \mathcal{D}_q^{\tau}$.
In particular,  $\Phi$ is weakly $\alpha(n)$-H$\ddot{o}$lder continuous on $M_{\infty}$
with respect to any limit measure. 
\end{corollary}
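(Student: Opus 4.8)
The plan is to read Corollary \ref{hol1} as a Campanato--Morrey estimate and then run the classical argument that a function whose mean oscillation over balls decays at a H\"older rate is itself H\"older continuous. Set $f=\langle dr_p, dr_q\rangle$, a Borel function defined $\upsilon$-a.e., and recall that, by the definition of $\angle^{\upsilon}pzq$ and Theorem \ref{well3}, for $z\notin C_p\cup C_q\cup\{p\}\cup\{q\}$ the pointwise value is the limit of averages,
\[
\Phi(z)=\cos\angle pzq=\lim_{\rho\to 0}\frac{1}{\upsilon(B_\rho(z))}\int_{B_\rho(z)}f\,d\upsilon .
\]
Throughout I use that a limit measure $\upsilon$ is locally doubling (Bishop--Gromov passing to the limit), so $\upsilon(B_{2\rho}(z))\le C\,\upsilon(B_\rho(z))$ uniformly for $z$ in a bounded region and $\rho$ below a fixed scale.

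First I would fix the radius. Since $p,q\in B_R(x)\setminus B_{R^{-1}}(x)$, choosing $r=r(n,\tau,R)>0$ small enough --- say $r<\tfrac{1}{20}\min\{R^{-1},\tau\}$ --- guarantees that every $z\in B_r(x)\cap\mathcal{D}_p^\tau\cap\mathcal{D}_q^\tau$ satisfies $\overline{z,p},\overline{z,q}\ge R^{-1}/2=:\beta$, so that $z\in\mathcal{D}_p^\tau\cap\mathcal{D}_q^\tau\setminus(B_\beta(p)\cup B_\beta(q))$. Corollary \ref{hol1} then applies at such $z$ at every scale $0<\rho<\min\{\beta,\tau\}$ with the single uniform bound
\[
\frac{1}{\upsilon(B_\rho(z))}\int_{B_\rho(z)}\Bigl|f-\tfrac{1}{\upsilon(B_\rho(z))}\int_{B_\rho(z)}f\,d\upsilon\Bigr|\,d\upsilon\le C(n,\tau,R)\,\rho^{\alpha(n)}.
\]

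Next I would chain. Writing $f_{\rho,z}$ for the average of $f$ over $B_\rho(z)$, the displayed estimate together with doubling gives $|f_{2^{-(k+1)}\rho,z}-f_{2^{-k}\rho,z}|\le C(2^{-k}\rho)^{\alpha(n)}$, whence the dyadic averages are Cauchy and $|f_{\rho,z}-\Phi(z)|\le C\rho^{\alpha(n)}$ (reconfirming that the limit exists). For two points $z_1,z_2$ in the set with $s=\overline{z_1,z_2}$ small I would estimate
\[
|\Phi(z_1)-\Phi(z_2)|\le |\Phi(z_1)-f_{2s,z_1}|+|f_{2s,z_1}-f_{3s,z_1}|+|f_{3s,z_1}-f_{2s,z_2}|+|f_{2s,z_2}-\Phi(z_2)|,
\]
controlling the first, second and fourth terms by the decay above and the third by comparing $f_{2s,z_2}$ with $f_{3s,z_1}$ through the inclusion $B_{2s}(z_2)\subset B_{3s}(z_1)$ and doubling. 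All ball centers here lie in $\mathcal{D}_p^\tau\cap\mathcal{D}_q^\tau$, so each invocation of the estimate is legitimate, and the outcome is $|\Phi(z_1)-\Phi(z_2)|\le C(n,\tau,R)\,s^{\alpha(n)}$, i.e. $\alpha(n)$-H\"older continuity on $B_r(x)\cap\mathcal{D}_p^\tau\cap\mathcal{D}_q^\tau$.

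Finally, for the global weak H\"older statement I would invoke that $C_p=M_\infty\setminus\bigcup_{\tau>0}\mathcal{D}_p^\tau$ together with \cite[Theorem $3.2$]{ho}, which gives $\upsilon(C_p\cup C_q)=0$ for any limit measure. Choosing sequences $\tau_k\downarrow 0$, $R_m\uparrow\infty$ and a countable dense set of centers $\{x_j\}$, the sets $B_{r(n,\tau_k,R_m)}(x_j)\cap\mathcal{D}_p^{\tau_k}\cap\mathcal{D}_q^{\tau_k}$ form a countable Borel family covering $M_\infty$ up to the $\upsilon$-null set $C_p\cup C_q\cup\{p,q\}$, and $\Phi$ is $\alpha(n)$-H\"older on each; by definition this is exactly weak $\alpha(n)$-H\"older continuity, and since $\Phi$ and the cut-locus null sets are independent of the limit measure, the property holds with respect to any of them. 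The main obstacle is the bookkeeping in the third paragraph: upgrading the $L^1$ mean-oscillation decay to pointwise two-center H\"older control while keeping every auxiliary ball centered in the region $\mathcal{D}_p^\tau\cap\mathcal{D}_q^\tau$ where Corollary \ref{hol1} is available, and ensuring the comparison radius $3s$ stays below $\min\{\beta,\tau\}$ so that the decay constant remains uniform over the whole set.
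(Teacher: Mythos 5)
Your argument is correct and follows exactly the route the paper intends: the paper simply declares Corollary \ref{hold} a direct consequence of Corollary \ref{hol1} and \cite[Theorem $3.2$]{ho}, and your Campanato-type chaining of dyadic averages (using doubling from Bishop--Gromov) plus the countable covering by the sets $\mathcal{D}_p^{\tau_k}\cap\mathcal{D}_q^{\tau_k}$ is precisely the standard elaboration of that claim. No gaps.
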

Th next corollary is a direct consequence of Proposition \ref{we2nd} and Corollary \ref{hold}:
\begin{corollary}\label{holder}
Let $\{(C_i^l, \phi_i^l)\}_{l,i}$ be a rectifiable coordinate system constructed by distance functions on $M_{\infty}$ as in Theorem \ref{dist3}.
Then $M_{\infty}$ has a  weakly $C^{1, \alpha(n)}$-structure with respect to $\{(C_i^l, \phi_i^l)\}_{l,i}$.
Moreover for every $p \in M_{\infty}$, $dr_p$ is a weakly $\alpha(n)$-H$\ddot{o}$lder continuous $1$-form on $M_{\infty}$ with respect to $\{(C_i^l, \phi_i^l)\}_{l,i}$.
\end{corollary}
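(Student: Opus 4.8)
The plan is to reduce the statement to Proposition \ref{we2nd} by checking that the canonical Riemannian metric of $(M_{\infty}, \upsilon)$ attached to the given distance-function coordinate system is weakly $\alpha(n)$-H\"older.

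First I would record the coordinates explicitly: by Theorem \ref{dist3} each component is a distance function, say $\phi_{i,s}^l = r_{p_{i,s}^l}$ for suitable $p_{i,s}^l \in M_{\infty}$, so the canonical metric $g = \{g_i^l\}_{l,i}$ has entries $(g_i^l)^{st} = \langle dr_{p_{i,s}^l}, dr_{p_{i,t}^l}\rangle$. The crucial identity is $\langle dr_p, dr_q\rangle(x) = \cos \angle pxq$ for $\upsilon$-a.e. $x$, which I would obtain exactly as in the remark following Theorem \ref{well3}: Theorem \ref{well3} gives $\cos\angle pxq = \lim_{r\to 0}\upsilon(B_r(x))^{-1}\int_{B_r(x)}\langle dr_p, dr_q\rangle\,d\upsilon$, while Lebesgue differentiation (applicable since $\langle dr_p, dr_q\rangle \in L^{\infty}$) identifies this limit with the value $\langle dr_p, dr_q\rangle(x)$ at a.e. point. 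Hence each $(g_i^l)^{st}$ coincides a.e. with the function $x \mapsto \cos\angle p_{i,s}^l x p_{i,t}^l$.

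By Corollary \ref{hold} every such angle function is weakly $\alpha(n)$-H\"older continuous on $M_{\infty}$, and composing with the bi-Lipschitz chart $\phi_i^l$ preserves this, so $g_i^l \in \mathrm{Riem}_{\alpha(n)}(\phi_i^l(C_i^l))$ and $g$ is a weakly $\alpha(n)$-H\"older Riemannian metric with respect to $\{(C_i^l, \phi_i^l)\}_{l,i}$. Proposition \ref{we2nd} then delivers at once the weakly $C^{1,\alpha(n)}$-structure (and, as a byproduct, $g \in \Gamma_{\alpha(n)}(T^*M_{\infty} \otimes T^*M_{\infty})$). For the final assertion I would expand $dr_p$ in the cotangent frame $\{dr_{p_{i,s}^l}\}_s$ determined by the chart: if $dr_p = \sum_s a_s\, dr_{p_{i,s}^l}$, then by the construction of $T^*M_{\infty}$ reviewed after Theorem \ref{29292} the coordinate representative of $dr_p$ on $\phi_i^l(C_i^l)$ is simply $\sum_s (a_s \circ (\phi_i^l)^{-1})\, dy_s$. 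Pairing with $dr_{p_{i,t}^l}$ gives $a = (g_i^l)^{-1} c$ with $c_t(x) = \langle dr_p, dr_{p_{i,t}^l}\rangle(x) = \cos\angle p x p_{i,t}^l$; each $c_t$ is weakly $\alpha(n)$-H\"older by Corollary \ref{hold}, and on each Borel piece where $g_i^l$ is genuinely $\alpha(n)$-H\"older its inverse is as well (matrix inversion being smooth on positive-definite matrices), so the coefficients $a_s$ are weakly $\alpha(n)$-H\"older and $dr_p$ is a weakly $\alpha(n)$-H\"older $1$-form.

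The approach is essentially direct, and the only point demanding care is the handling of the $\upsilon$-null exceptional sets --- the a.e. identity $\langle dr_p, dr_q\rangle = \cos\angle pxq$, the cut loci $C_{p_{i,s}^l}$, and the poles $p_{i,s}^l$ themselves. All of these are $\upsilon$-null by \cite[Theorem $3.2$]{ho} together with the definition of the angle, and since weak H\"older continuity is by construction insensitive to null sets, this bookkeeping presents no genuine obstacle.
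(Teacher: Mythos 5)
Your proposal is correct and follows exactly the route the paper intends: the paper gives no written proof beyond declaring the corollary ``a direct consequence of Proposition \ref{we2nd} and Corollary \ref{hold}'', and your argument --- identifying the metric components $(g_i^l)^{st}$ a.e. with $\cos\angle p_{i,s}^l\,x\,p_{i,t}^l$ via Lebesgue differentiation, invoking Corollary \ref{hold} for their weak H\"older continuity, and then applying Proposition \ref{we2nd} --- is precisely the omitted verification, with the expansion of $dr_p$ in the distance-function frame handling the ``moreover'' clause in the natural way. The only point you gloss slightly is that inverting $g_i^l$ requires a uniform bound away from degeneracy on each H\"older piece, but this is supplied by the $(1\pm\delta)$-bi-Lipschitz condition $(3)$ of Definition \ref{rectifiable} (or by a further countable Borel decomposition, which weak H\"older continuity permits), so there is no genuine gap.
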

\begin{remark}
Let $(Y, p)$ be a Colding-Naber's example as in Theorem \ref{cn}. 
According to \cite[Theorem $1.2$]{co-na} by Colding-Naber, we see that $Y$ does \textit{NOT} have $C^{1, \beta}$-structure in ordinary sense for \textit{any} $0<\beta\le1$.
\end{remark}
Next, we will discuss the continuity of angles with respect to the Gromov-Hausdorff topology.
Recall that a map $\phi$ from a metric space $X_1$ to a metric space $X_2$ is said to be an \textit{$\epsilon$-Gromov-Hausdorff approximation} if $X_2 \subset B_{\epsilon}(\mathrm{Image} (\phi))$ and $|\overline{x, y}-\overline{\phi(x), \phi(y)}|<\epsilon$ for every $x, y \in X_1$.
\begin{proposition}\label{conti}
Let $(Y, y)$ be a Ricci limit space, $R>1$, $0<\tau <1$, $0<\beta <1$ and $p, q \in B_R(y)$ with $y \in M_{\infty} \setminus (C_p \cup C_q \cup \{p, q\})$.
Then for every $\epsilon>0$, there exists $\delta >0$ such that the following property holds:
Let $(\hat{Y}, \hat{y})$ be a Ricci limit space and $\hat{p}, \hat{q} \in B_R(\hat{y})$ with $\hat{y}\in \mathcal{D}_{\hat{p}}^{\tau} \cap \mathcal{D}_{\hat{q}}^{\tau} \setminus (B_{\beta}(\hat{p}) \cup B_{\beta}(\hat{q}))$.
Assume that there exists a $\delta$-Gromov-Hausdorff approximation $\phi$ from $(B_R(\hat{y}), \hat{y})$ to $(B_R(y), y)$ such that $\overline{\phi(\hat{p}), p}<\delta$, $\overline{\phi (\hat{q}), q}<\delta$. Then we have
$|\angle pyq - \angle \hat{p}\hat{y}\hat{q}|<\epsilon$.
\end{proposition}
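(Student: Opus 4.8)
The plan is to reduce everything to the behaviour, at a single fixed small scale, of the averaged inner product
\begin{equation}
I_{p,q}(x,r)=\frac{1}{\upsilon(B_r(x))}\int_{B_r(x)}\langle dr_p, dr_q\rangle\, d\upsilon,
\end{equation}
which, by the definition of $\angle^{\upsilon}pxq$ preceding Theorem \ref{well3}, satisfies $\cos\angle pxq=\lim_{r\to 0}I_{p,q}(x,r)$ whenever the angle is defined. Since $\arccos$ is uniformly continuous on $[-1,1]$, it is enough to make $|\cos\angle pyq-\cos\angle\hat{p}\hat{y}\hat{q}|$ small, and I would work with cosines throughout, converting back to angles only at the very end.

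First I would upgrade the single-scale oscillation bound of Corollary \ref{hol1} to a quantitative rate of convergence. Comparing the averages at scales $r$ and $r/2$ and using the volume doubling of a limit measure (Bishop--Gromov, with doubling constant depending only on $n$ for $r<1$), one obtains $|I_{p,q}(x,r)-I_{p,q}(x,r/2)|\le C(n)\max\{r,r/\beta,r/\tau\}^{\alpha(n)}$; summing the resulting geometric series over the dyadic scales $r,r/2,r/4,\dots$ yields
\begin{equation}
\left|I_{p,q}(x,r)-\cos\angle pxq\right|\le C(n)\max\{r,r/\beta,r/\tau\}^{\alpha(n)}
\end{equation}
for every $0<r<\min\{\beta,\tau\}$. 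Because the constants depend only on $n$, this estimate holds with the \emph{same} $C(n)$ and $\alpha(n)$ on the fixed space $(Y,y)$ (for suitable $\tau_0,\beta_0>0$ supplied by $y\notin C_p\cup C_q\cup\{p,q\}$) and on every admissible competitor $(\hat{Y},\hat{y})$ with the prescribed $\tau,\beta$. I would then fix once and for all a scale $r$ so small that both of the corresponding error terms are $<\epsilon/4$.

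It then remains to prove that, for this fixed $r$, one has $I_{\hat{p},\hat{q}}(\hat{y},r)\to I_{p,q}(y,r)$ as the configuration converges, which I would establish by contradiction. If it failed there would be $\epsilon_0>0$, admissible Ricci limit spaces $(\hat{Y}_j,\hat{y}_j,\hat{p}_j,\hat{q}_j)$ and $\delta_j$-Gromov--Hausdorff approximations $\phi_j$ with $\delta_j\to 0$ and $\overline{\phi_j(\hat{p}_j),p}+\overline{\phi_j(\hat{q}_j),q}\to 0$, yet with the averages staying $\epsilon_0$-apart. Then $(\hat{Y}_j,\hat{y}_j,\hat{p}_j,\hat{q}_j)\to(Y,y,p,q)$ on $B_R$ in the pointed Gromov--Hausdorff sense, and, endowing each $\hat{Y}_j$ with a renormalized limit measure, I may pass to a subsequence so that $(\hat{Y}_j,\hat{y}_j,\hat{\upsilon}_j)\to(Y,y,\upsilon)$ in the measured sense for some limit measure $\upsilon$ on $Y$. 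The crux is a diagonal argument: writing each $\hat{Y}_j$ as a measured limit of Riemannian manifolds $\hat{M}_{j,i}$ and using the convergence of differentials of distance functions (properties $(1)$ and $(3)$ of page $8$) to identify $I_{\hat{p}_j,\hat{q}_j}(\hat{y}_j,r)$ as the limit over $i$ of the corresponding manifold averages, I would select indices $i(j)$ so that $\hat{M}_{j,i(j)}$ converges in the measured sense to $(Y,y,\upsilon)$ while reproducing $I_{\hat{p}_j,\hat{q}_j}(\hat{y}_j,r)$ up to error $1/j$. Applying property $(3)$ to the single manifold sequence $\hat{M}_{j,i(j)}\to Y$ then forces $I_{\hat{p}_j,\hat{q}_j}(\hat{y}_j,r)\to I_{p,q}(y,r)$, where I use that the angle, and hence the $r\to 0$ limit above, is independent of the limit measure (as noted after Theorem \ref{well3}); this is the desired contradiction.

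Feeding this fixed-scale convergence into the two Campanato estimates gives $\limsup_j|\cos\angle\hat{p}_j\hat{y}_j\hat{q}_j-\cos\angle pyq|\le\epsilon_0/2$, contradicting the assumed $\epsilon_0$-separation and completing the proof. I expect the diagonal step to be the genuine obstacle: the convergence machinery of page $8$ is phrased for a sequence of manifolds approaching a single limit, so making it apply when the competitor $\hat{Y}$ is itself a Ricci limit space requires the two-parameter diagonal construction together with the verification that the limit measure $\upsilon$ is a bona fide limit measure on $Y$. The measure-independence of the angle is precisely what renders this harmless.
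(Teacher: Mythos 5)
Your argument is correct and follows essentially the same route as the paper's proof: a contradiction argument that fixes a single scale $r$ via the Campanato-type consequence of Corollary \ref{hol1} (uniform in the competitor spaces because the constants depend only on $n$, $\tau$, $\beta$) and then passes to the limit in the averaged inner product using the convergence $dr_{p_i}\to dr_p$, $dr_{q_i}\to dr_q$. The two-parameter diagonal construction you single out as the main obstacle is precisely the step the paper leaves implicit when it applies the page-$8$ convergence properties to a sequence of Ricci limit spaces rather than manifolds, and your handling of it (including the observation that only the measure-independence of the angle, not of the fixed-scale average, is needed) is sound.
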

\begin{proof}
The proof is done by contradiction.
Suppose that the assertion is false.
Then  there exist $\epsilon_0>0$, $R>1$, $\tau>0$, $\beta>0$, sequences of Ricci limit spaces $\{(Y_i, y_i)\}_{i<\infty}$ and of points $p_i, q_i \in B_R(y_i)$ such that $(B_R(y_i), y_i) \to (B_R(y), y)$, $p_i \to p, q_i \to q$, $y_i \in \mathcal{D}_{p_i}^{\tau} \cap \mathcal{D}_{q_i}^{\tau} \setminus (B_{\beta}(p_i) \cup B_{\beta}(q_i))$ for every $i$ and that $|\cos \angle p_iy_iq_i-\cos \angle pyq|\ge \epsilon_0$ for every $i$.
Moreover, by Gromov's compactness theorem, without loss of generality, we can assume that there exist a limit measure $\upsilon$ on $Y$ and 
a sequence $\{\upsilon_i\}_i$ of limit measures $\upsilon_i$ on $Y_i$ such that $\upsilon$ is the limit measure of $\{\upsilon_i\}_i$.
By Corollary \ref{hol1}, there exists $r>0$ such that
\begin{equation}
\left| \cos \angle p_iy_iq_i-\frac{1}{\upsilon_i (B_r(y_i))}\int_{B_r(y_i)}\langle dr_{p_i}, dr_{q_i}\rangle d\upsilon_i \right| + \left| \cos \angle pyq -\frac{1}{\upsilon (B_r(y))}\int_{B_r(y)}\langle dr_p, dr_q \rangle d\upsilon \right| < \frac{\epsilon_0}{3}
\end{equation}
for every $i$.
On the other hand, since $dr_{p_i} \to dr_p$ and $dr_{q_i} \to dr_q$ on $Y$, we have
\begin{equation}
\left| \frac{1}{\upsilon_i (B_r(y_i))}\int_{B_r(y_i)}\langle dr_{p_i}, dr_{q_i}\rangle d\upsilon_i-\frac{1}{\upsilon (B_r(y))}\int_{B_r(y)}\langle dr_p, dr_q \rangle d\upsilon \right| <\frac{\epsilon_0}{3}
\end{equation}
for every sufficiently large $i$.
Thus we have $|\cos \angle p_iy_iq_i-\cos \angle pyq|<\epsilon_0$ for every sufficiently large $i$.
This is a contradiction.
\end{proof}
The following theorem is  about the continuity of angles with respect to the Gromov-Hausdorff topology:
\begin{theorem}[GH-continuity of angles]
Let $R>1$, $\beta>0$ and $0<\tau<1$. Then for every $\epsilon>0$, there exists $\delta=\delta(n, R, \tau, \beta, \epsilon)>0$ such that the following property holds:
Let $(Y_1, y_1)$ and $(Y_2, y_2)$ be Ricci limit spaces, and $a_i, b_i \in B_R(y_i)$ with $y_i \in \mathcal{D}_{a_i}^{\tau} \cap \mathcal{D}_{b_i}^{\tau} \setminus (B_{\beta}(a_i) \cup B_{\beta}(b_i))$ for every $i=1, 2$.
Assume that there exists a $\delta$-Gromov-Hausdorff approximation $\phi$ from $(B_{R}(y_1), y_1)$ to $(B_{R}(y_2), y_2)$ such that $\overline{\phi(a_1), a_2}<\delta$ and $\overline{\phi(b_1), b_2}<\delta$.
Then we have $|\angle a_1y_1b_1-\angle a_2y_2b_2|< \epsilon$.
\end{theorem}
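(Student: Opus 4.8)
The plan is to argue by contradiction and reduce the uniform statement to the single-space continuity already established in Proposition \ref{conti}, producing a common limit of the two variable sequences via Gromov's compactness theorem. First I would suppose the conclusion fails for the given $R, \beta, \tau$ and some $\epsilon_0 > 0$. Then there exist sequences of Ricci limit spaces $(Y_1^j, y_1^j)$, $(Y_2^j, y_2^j)$, points $a_i^j, b_i^j \in B_R(y_i^j)$ with $y_i^j \in \mathcal{D}_{a_i^j}^{\tau} \cap \mathcal{D}_{b_i^j}^{\tau} \setminus (B_{\beta}(a_i^j) \cup B_{\beta}(b_i^j))$, and $\delta_j$-Gromov-Hausdorff approximations $\phi_j$ from $(B_R(y_1^j), y_1^j)$ to $(B_R(y_2^j), y_2^j)$ with $\delta_j \to 0$, $\overline{\phi_j(a_1^j), a_2^j} < \delta_j$, $\overline{\phi_j(b_1^j), b_2^j} < \delta_j$, yet $|\angle a_1^j y_1^j b_1^j - \angle a_2^j y_2^j b_2^j| \ge \epsilon_0$ for every $j$.

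Next I would pass to the limit. By Gromov's compactness theorem, after passing to a subsequence, $(Y_1^j, y_1^j) \to (Y, y)$ for some pointed proper geodesic space $(Y, y)$, and a standard diagonal argument over the manifolds defining each $Y_1^j$ shows that $(Y, y)$ is again a Ricci limit space. Because $\delta_j \to 0$, composing $\phi_j$ with the almost isometries realizing $(Y_1^j, y_1^j) \to (Y, y)$ forces $(Y_2^j, y_2^j) \to (Y, y)$ to the \emph{same} limit. Extracting a further subsequence I obtain limit points $a = \lim_j a_1^j = \lim_j a_2^j$ and $b = \lim_j b_1^j = \lim_j b_2^j$ in $\overline{B}_R(y)$, where the coincidence of the two limits uses $\overline{\phi_j(a_1^j), a_2^j} < \delta_j \to 0$ and the analogue for $b$.

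The key step is to verify that the limiting configuration is nondegenerate, so that $\angle ayb$ is defined and Proposition \ref{conti} applies; this is exactly where the fixed $\tau$ and $\beta$ are used. For the extension condition I would take $w_i^j$ on the minimal geodesic from $a_i^j$ through $y_i^j$ with $\overline{w_i^j, y_i^j} = \tau$, which exists since $y_i^j \in \mathcal{D}_{a_i^j}^{\tau}$; these points stay in a fixed bounded region, so a subsequential limit $w$ satisfies $\overline{w, y} = \tau$ and $\overline{a, y} + \overline{y, w} = \overline{a, w}$ by continuity of distances, giving $y \in \mathcal{D}_a^{\tau}$, and likewise $y \in \mathcal{D}_b^{\tau}$. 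The bounds $\overline{a_i^j, y_i^j} \ge \beta$ and $\overline{b_i^j, y_i^j} \ge \beta$ pass directly to the limit, so $y \notin B_{\beta}(a) \cup B_{\beta}(b)$; in particular $y \notin C_a \cup C_b \cup \{a, b\}$ and $\angle ayb$ is well-defined by Theorem \ref{well3}. To handle the possibility that $a$ or $b$ lands on $\partial B_R(y)$, I would simply invoke Proposition \ref{conti} with $R$ replaced by $R + 1$, which changes nothing else.

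Finally I would apply Proposition \ref{conti} with fixed space $(Y, y)$, fixed points $a, b$, and perturbed spaces $(Y_i^j, y_i^j)$: since $(Y_i^j, y_i^j) \to (Y, y)$ with $a_i^j \to a$ and $b_i^j \to b$, for all large $j$ we get $|\angle a_i^j y_i^j b_i^j - \angle ayb| < \epsilon_0 / 2$ for $i = 1, 2$, whence $|\angle a_1^j y_1^j b_1^j - \angle a_2^j y_2^j b_2^j| < \epsilon_0$, contradicting the choice of the sequences. I expect the main obstacle to be the nondegeneracy verification of the third paragraph: the role of the hypotheses involving $\tau$ and $\beta$ is precisely to prevent the limit from collapsing (for instance $a = y$, or $y$ falling into the cut locus of $a$), which would leave $\angle ayb$ undefined and break the reduction; the uniform lower bounds are what make the conditions $y \in \mathcal{D}_a^{\tau}$ and $y \notin B_{\beta}(a)$ closed under Gromov-Hausdorff convergence.
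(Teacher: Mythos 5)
Your proposal is correct and follows essentially the same route as the paper: argue by contradiction, use Gromov's compactness theorem to extract a common limit configuration $(Y,y,a,b)$ for both sequences, and apply Proposition \ref{conti} twice to force $\angle a_1^j y_1^j b_1^j$ and $\angle a_2^j y_2^j b_2^j$ to the same limit $\angle ayb$. Your third paragraph, checking that the conditions $y \in \mathcal{D}_a^{\tau}\cap\mathcal{D}_b^{\tau}$ and $y \notin B_{\beta}(a)\cup B_{\beta}(b)$ are closed under the convergence so that $\angle ayb$ is well-defined, is a detail the paper leaves implicit, and it is handled correctly.
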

\begin{proof}
The proof is done by contradiction.
Suppose that the assertion is false.
Then by Gromov's compactness theorem, there exist 
\begin{enumerate}
\item $R>1$, $\beta>0$, $0< \tau<1$, $\epsilon_0>0$,
\item a Ricci limit space $(Z, z)$, points $a, b \in Z$,
\item a sequence of Ricci limit spaces $\{(Z_i^j, z_i^j)\}_{1 \le i<\infty, j=1,2}$,
\item a sequence of positive numbers $\{\delta_i\}_i$ with $\delta_i \to 0$,
\item sequences of points $a_i^j, b_i^j \in Z_i^j$ with $z_i^j \in \mathcal{D}_{a_i^j}^{\tau} \cap \mathcal{D}_{b_i^j}^{\tau} \cap (B_{R}(a_i^j) \setminus B_{\beta}(a_i^j)) \cap (B_{R}(b_i^j) \setminus B_{\beta}(b_i^j))$,
\item a sequence of $\delta_i$-Gromov-Hausdorff approximations $\phi_i$ from $(B_R(z_i^1), z_i^1)$ to $(B_R(z_i^2), z_i^2)$ with
$\overline{\phi_i(a_i^1), a_i^2}<\delta_i$ and $\overline{\phi_i(b_i^1), b_i^2}<\delta_i$,
\end{enumerate}
such that $(B_R(z_i^j), z_i^j) \to (B_R(z), z)$, $a_i^j \to a$, $b_i^j \to b$ as $i \to \infty$ for every $j=1,2$ and that $|\angle a_i^1z_i^1b_i^1-\angle a_i^2z_i^2b_i^2|\ge \epsilon_0$.
On the other hand, by Proposition \ref{conti}, we have $\lim_{i \to \infty}\angle a_i^jz_i^jb_i^j=\angle azb$. This is a contradiction.
\end{proof}
We end this subsection by giving an application of the weak H$\ddot{\mathrm{o}}$lder continuity of angles to a bi-Lipschitz embedding from a subset of $M_{\infty}$ to a Euclidean space.
Let $(\mathcal{R}_k)_{\delta, r}=\{x \in M_{\infty};d_{GH}((\overline{B}_t(x), x), (\overline{B}_t(0_k), 0_k))<\delta t$ for every $0<t<r$ $\}$, where $0_k \in \mathbf{R}^k$, and $d_{GH}$ is the Gromov-Hausdorff distance between pointed metric spaces.
See also \cite{ch-co1, ch-co3, co-na, gr} by Cheeger-Colding, Colding-Naber and Gromov.
\begin{proposition}
Let $R>1$, $r>0$, $\delta >0$, $\tau>0$ and $x \in (\mathcal{R}_k)_{\delta, r}$.
Assume that there exists $\{p_i\}_{1 \le i \le k} \subset M_{\infty}$ such that $x \in \bigcap _i((B_R(p_i) \setminus B_{R^{-1}}(p_i)) \cap \mathcal{D}_{p_i}^{\tau})$ and $\mathrm{det}(\cos \angle p_ixp_j)_{ij} \neq 0$.
Then the map $\phi_t =(r_{p_1}, \ldots, r_{p_k})\sqrt{(\cos \angle p_ixp_j)_{ij}}^{-1}$ from $B_t(x) \cap (\mathcal{R}_k)_{\delta, r} \cap \bigcap_i\mathcal{D}_{p_i}^{\tau}$ to $\mathbf{R}^k$ is an $(1 \pm \Psi(\delta, t; R, \beta, \tau, r))$-bi-Lipschitz embedding for every $0<t<r$, where $\Psi(a, b; c, d, e, f)$ is a positive definite function on $\mathbf{R}^6$ satisfying that $\lim_{a \to 0, b \to 0}\Psi(a, b; c, d, e, f)=0$ for every fixed $c, d, e, f$.
\end{proposition}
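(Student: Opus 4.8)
The plan is to argue by contradiction and compactness, rescaling to the scale of the two points whose images must be compared; this is the mechanism already used for Proposition \ref{conti} and the GH-continuity of angles, now combined with the convergence of Busemann functions from Claim \ref{bus}. The starting observation is purely algebraic: since $\sqrt{G}^{-1}$ is symmetric with square $G^{-1}$, where $G=(\cos\angle p_ixp_j)_{ij}$, for any two points $a,b$ in the domain
\[
\overline{\phi_t(a),\phi_t(b)}^2=\sum_{i,j}G^{ij}\bigl(r_{p_i}(a)-r_{p_i}(b)\bigr)\bigl(r_{p_j}(a)-r_{p_j}(b)\bigr),
\]
with $(G^{ij})$ the inverse matrix, so the whole problem reduces to comparing this quadratic form with $\overline{a,b}^2$. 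If the assertion failed (for fixed $R,\tau,r$ and configurations with $\det G$ bounded below, see the last paragraph), there would be $\epsilon_0>0$, Ricci limit spaces, centres $x_s\in(\mathcal{R}_k)_{\delta_s,r}$ with $\delta_s\to0$, points $p_1^s,\dots,p_k^s$ satisfying the hypotheses, scales $t_s\to0$, and pairs $a_s,b_s\in B_{t_s}(x_s)\cap(\mathcal{R}_k)_{\delta_s,r}\cap\bigcap_i\mathcal{D}_{p_i^s}^{\tau}$ with $|\,\overline{\phi_{t_s}(a_s),\phi_{t_s}(b_s)}/\overline{a_s,b_s}-1\,|\ge\epsilon_0$.

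I would then rescale the metric by $s_s^{-1}$, where $s_s=\overline{a_s,b_s}\le 2t_s$, and recentre at $a_s$. Since $a_s\in(\mathcal{R}_k)_{\delta_s,r}$ lies in the domain, the rescaled Reifenberg condition holds at all scales up to $r/s_s\to\infty$ with constant $\delta_s\to0$, so the rescaled pointed spaces converge in the pointed GH sense to $(\mathbf{R}^k,0_k)$, with $a_s\to 0_k$ and $b_s\to b_\infty$, $|b_\infty|=1$. Because $\overline{p_i^s,a_s}\in[R^{-1},R]$, after rescaling the $p_i^s$ escape to infinity, and the normalized distance functions $f_i^s:=s_s^{-1}(r_{p_i^s}-r_{p_i^s}(a_s))$ converge to the Busemann function $b_{l_i}(y)=\langle v_i,y\rangle$ of a line $l_i$ through $0_k$, exactly as in Claim \ref{bus}; here $a_s\in\mathcal{D}_{p_i^s}^{\tau}$ guarantees the geodesics extend, so $l_i$ is a genuine line and $v_i$ a well-defined unit vector. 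Evaluating along $a_s\to0_k$, $b_s\to b_\infty$ gives
\[
\frac{r_{p_i^s}(a_s)-r_{p_i^s}(b_s)}{s_s}\longrightarrow\langle v_i,-b_\infty\rangle .
\]

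Next I identify the matrix. The form in $\phi_{t_s}$ uses angles at $x_s$, not at $a_s$; but $\overline{x_s,a_s}\le t_s\to0$ and both points satisfy the hypotheses of Corollary \ref{hold}, so the weak H\"older continuity of angles yields $\cos\angle p_i^sx_sp_j^s-\cos\angle p_i^sa_sp_j^s\to0$, while the computation in the proof of Theorem \ref{well3} (Corollary \ref{hol1} together with the convergence of gradients $dr_{p_i^s}\to db_{l_i}$, property $(3)$ in page $8$) gives $\cos\angle p_i^sa_sp_j^s\to\langle v_i,v_j\rangle$. Hence $G^s=(\cos\angle p_i^sx_sp_j^s)\to G^{\infty}:=(\langle v_i,v_j\rangle)=V^{\top}V$ with $V=[v_1\cdots v_k]$. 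Writing $w=-b_\infty$ and passing to the limit in the displayed quadratic form,
\[
\lim_s\frac{\overline{\phi_{t_s}(a_s),\phi_{t_s}(b_s)}^2}{\overline{a_s,b_s}^2}=(V^{\top}w)^{\top}(G^{\infty})^{-1}(V^{\top}w)=w^{\top}V(V^{\top}V)^{-1}V^{\top}w=|w|^2=1,
\]
the last step because $V(V^{\top}V)^{-1}V^{\top}$ is the orthogonal projection onto $\mathrm{span}\{v_i\}=\mathbf{R}^k$ (this is where $\det G^{\infty}\neq0$ enters). Since the squared ratio tends to $1$, so does the ratio, contradicting $\epsilon_0$.

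The step I expect to be the genuine obstacle is the \emph{non-degeneracy of the limiting Gram matrix}. The hypotheses $p_i\in B_R\setminus B_{R^{-1}}$ and $x\in\mathcal{D}_{p_i}^{\tau}$ do not by themselves keep $\det G$ away from $0$: two of the directions $v_i$ may become nearly parallel, and if $\det G^{\infty}=0$ then $\sqrt{G^s}^{-1}$ blows up and the projection above fails to be the identity. Thus the estimate must be read for a fixed non-degenerate configuration, or with a lower bound on $\det(\cos\angle p_ixp_j)$ absorbed into $\Psi$; securing this lower bound along the subsequence, and verifying that the limiting projection is exactly the identity, is the delicate point, whereas all the remaining convergences are routine consequences of Claim \ref{bus}, Corollary \ref{hold}, Corollary \ref{hol1}, and the convergence theory recalled in Section $2$.
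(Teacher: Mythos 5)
Your argument is essentially correct, but it takes a genuinely different route from the paper. The paper's proof is two lines: it first shows, by the same blow-up used for Theorem \ref{well3}, that
\begin{equation*}
\lim_{u\to 0}\frac{1}{\upsilon(B_u(x))}\int_{B_u(x)}\det(\langle dr_{p_i},dr_{p_j}\rangle)_{ij}\,d\upsilon=\det(\cos\angle p_ixp_j)_{ij},
\end{equation*}
and then reduces the bi-Lipschitz estimate to the argument of \cite[Lemma $3.14$]{Ho}, an integral (segment-inequality type) mechanism that converts $L^1$ almost-constancy of the Gram matrix $(\langle dr_{p_i},dr_{p_j}\rangle)_{ij}$ on small balls into distance estimates for pairs of points in the Reifenberg set. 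You instead blow up directly at the scale $\overline{a,b}$ of the pair being compared, use the Reifenberg hypothesis to force the rescaled limit to be $(\mathbf{R}^k,0_k)$, identify the limiting quadratic form via Busemann functions as in Claim \ref{bus} and via Corollary \ref{hol1}, and extract the bi-Lipschitz bound from the identity $V(V^{\top}V)^{-1}V^{\top}=\mathrm{id}$. Your route avoids the segment inequality entirely and is arguably more transparent, at the cost of being non-effective: the contradiction/compactness scheme produces no explicit $\Psi$, whereas the integral argument does. Both routes consume the same inputs (Claim \ref{bus}, Corollary \ref{hol1}, the convergence properties recalled on page $8$).

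The substantive issue you raise --- that $\Psi$ cannot be independent of a lower bound on $\det(\cos\angle p_ixp_j)_{ij}$ --- is real, and it is a defect of the statement rather than of your proof. Already in $\mathbf{R}^k$ with nearly collinear directions the error in $r_{p_i}(a)-r_{p_i}(b)=\langle v_i,b-a\rangle+O(t\,\overline{a,b})$ is amplified by $\|\sqrt{G}^{-1}\|$, which is unbounded under the bare hypothesis $\det G\neq 0$; the paper's appeal to \cite[Lemma $3.14$]{Ho} hides exactly the same dependence. The proposition should be read with $\Psi$ depending also on a lower bound for $\det(\cos\angle p_ixp_j)_{ij}$ (equivalently, for a fixed non-degenerate configuration). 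With that reading your compactness argument closes: along the contradicting sequence $\det G^s$ stays bounded below, $(G^s)^{-1}\to(G^{\infty})^{-1}$, the $k$ limit directions $v_i$ span $\mathbf{R}^k$, and the projection identity gives limit ratio $1$, as you wrote.
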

\begin{proof}
Let $\upsilon$ be a limit measure on $M_{\infty}$.
An argument similar to that of the proof of Theorem \ref{well3} yields
\begin{equation}
\lim_{r \to 0}\frac{1}{\upsilon (B_r(x))}\int_{B_r(x)}\mathrm{det}(\langle dr_{p_i}, dr_{p_j}\rangle )_{ij}d\upsilon = \det(\cos \angle p_ixp_j)_{ij}.
\end{equation}
Then the proposition follows from an argument similar to that of the proof of \cite[Lemma $3.14$]{Ho}.
\end{proof}
\begin{remark}
Assume $(M_{\infty}, m_{\infty})$ is a \textit{noncollapsing limit}, i.e., there exists a sequence of pointed $n$-dimensional complete Riemannian manifolds $\{(M_i, m_i)\}_i$ with $\mathrm{Ric}_{M_i} \ge -(n-1)$ such that  $\lim_{i \to \infty}\mathrm{vol}\,B_1(m_i) >0$ and
$(M_i, m_i) \to (M_{\infty}, m_{\infty})$.
Then in \cite[Theorem $5.11$]{ch-co}, Cheeger-Colding showed that for every $x \in (\mathcal{R}_n)_{\delta, r}$, we have $B_{r/32}(x) \subset (\mathcal{R}_n)_{\Psi(\delta, r;n), r/32}$.
See also \cite[Remark $5.15$]{ch-co1}, \cite[Theorem B.2]{co-na1} and \cite[Theorem $1.1$]{co-na} by Cheeger-Colding and Colding-Naber for related results.
\end{remark}
\subsection{Weak Lipschitz continuity of the Riemannian metric on a Ricci limit space}
In this subsection, we will show that the Riemannian metric of a Ricci limit space is weakly Lipschitz.

Assume that  $(M_i, m_i, \mathrm{vol}/\mathrm{vol}\,B_1(m_i)) \to (M_{\infty}, m_{\infty}, \upsilon)$.
The following proposition is an essential result to get Theorem \ref{limit}.
See \cite{Kasue1, Kasue2, ka-ku1, ka-ku2} by Kasue and Kasue-Kumura for related important interesting results.
\begin{proposition}\label{ppp}
Let $R>0$ and let $\{f_i\}_{1\le i \le \infty}$ be a sequence of Lipschitz functions $f_i$ on $B_R(m_i)$ with $\sup_i \mathbf{Lip}f_i<\infty$.
Assume that the following hold:
\begin{enumerate}
\item $(f_i, df_i) \to (f_{\infty}, df_{\infty})$ on $B_R(m_{\infty})$.
\item There exists $r>0$ with $r<R$ such that $\mathrm{supp} (f_i) \subset B_r(m_i)$ for every $i$.
\item $|df_i|^2 \in H_{1,2}(B_R(m_i))$ for every $i<\infty$, and
\begin{equation}
\sup_{i<\infty} \frac{1}{\mathrm{vol}\,B_R(m_i)}\int_{B_R(m_i)}|d|df_i|^2|^2d\mathrm{vol}<\infty.
\end{equation}
\end{enumerate}
Then we have $|df_{\infty}|^2 \in H_{1,2}(B_R(m_{\infty}))$ and 
\begin{equation}
\frac{1}{\upsilon (B_R(m_{\infty}))}\int_{B_R(m_{\infty})}|d|df_{\infty}|^2|^2d\upsilon \le \liminf_{i \to \infty}\frac{1}{\mathrm{vol}\,B_R(m_i)}\int_{B_R(m_i)}|d|df_i|^2|^2d\mathrm{vol}.
\end{equation}
\end{proposition}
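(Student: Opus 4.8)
The plan is to recognize the desired inequality as the lower-semicontinuity (the $\liminf$ half of Mosco convergence) of the normalized Dirichlet energies
\begin{equation}
\mathcal{E}_i(u)=\frac{1}{\mathrm{vol}\,B_R(m_i)}\int_{B_R(m_i)}|du|^2d\mathrm{vol},\qquad \mathcal{E}_{\infty}(u)=\frac{1}{\upsilon(B_R(m_{\infty}))}\int_{B_R(m_{\infty})}|du|^2d\upsilon,
\end{equation}
applied to the sequence $g_i=|df_i|^2=\langle df_i,df_i\rangle$. By \cite{KS}, together with the structure theory of Cheeger-Colding, the forms $\mathcal{E}_i$ converge to $\mathcal{E}_{\infty}$ in the sense of Kuwae-Shioya and the domain of $\mathcal{E}_{\infty}$ is exactly $H_{1,2}(B_R(m_{\infty}))$. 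Thus the conclusion will follow once I verify that $g_i$ converges weakly in $L^2$ to $g_{\infty}=|df_{\infty}|^2$ along the measured Gromov-Hausdorff sequence and that $\sup_i\mathcal{E}_i(g_i)<\infty$. The latter is precisely hypothesis $(3)$, since $\mathcal{E}_i(g_i)=(\mathrm{vol}\,B_R(m_i))^{-1}\int_{B_R(m_i)}|d|df_i|^2|^2d\mathrm{vol}$.

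First I would record the elementary bounds. Because $\sup_i\mathbf{Lip}f_i<\infty$, we have the uniform bound $|g_i|=|df_i|^2\le(\sup_i\mathbf{Lip}f_i)^2$, and by hypothesis $(2)$ each $g_i$ vanishes outside $\overline{B}_r(m_i)$. Hence $\sup_i\|g_i\|_{L^2}<\infty$ and, just as importantly, there are no boundary effects near $\partial B_R$; this compact support is what lets me treat $g_i$ as a genuine element of $H_{1,2}(B_R(m_i))$ and pass to the limit globally on $B_R$.

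The heart of the argument is the weak $L^2$-convergence $g_i\to g_{\infty}$. Here I would use hypothesis $(1)$, namely $df_i\to df_{\infty}$, together with the third of the fundamental properties of the convergence of differentials recalled at the end of Section $2$ (from \cite{Ho}). Applying that property with $k=2$ and $F(x_1,x_2)=x_1x_2$, choosing the first inner-product slot to be $\langle df_i,df_i\rangle=g_i$ and the second to be $\langle d\psi_i,d\chi_i\rangle$ for test functions $\psi_i,\chi_i$ (for instance the harmonic functions of Theorem \ref{harm3}, whose differentials converge by the second such property), gives
\begin{equation}
\lim_{i\to\infty}\frac{1}{\mathrm{vol}\,B_R(m_i)}\int_{B_R(m_i)}g_i\langle d\psi_i,d\chi_i\rangle\,d\mathrm{vol}=\frac{1}{\upsilon(B_R(m_{\infty}))}\int_{B_R(m_{\infty})}g_{\infty}\langle d\psi_{\infty},d\chi_{\infty}\rangle\,d\upsilon.
\end{equation}
Since the functions $\langle d\psi,d\chi\rangle$ built from the rectifiable coordinate system form a total family in $L^2$, and $\sup_i\|g_i\|_{L^2}<\infty$, this testing characterises the weak $L^2$-convergence of $g_i$ to $g_{\infty}$ in the sense of \cite{KS}.

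With weak convergence of $g_i$ to $g_{\infty}$ and $\liminf_i\mathcal{E}_i(g_i)<\infty$ in hand, the $\liminf$ inequality of Mosco convergence forces $g_{\infty}\in\mathrm{Dom}(\mathcal{E}_{\infty})=H_{1,2}(B_R(m_{\infty}))$ and $\mathcal{E}_{\infty}(g_{\infty})\le\liminf_i\mathcal{E}_i(g_i)$, which is exactly the assertion. I expect the main obstacle to be this weak-convergence step: converting the convergence of the differentials $df_i$ into convergence of the nonlinear quantity $|df_i|^2$ tested against the inner-product functions, and confirming that those functions genuinely form a total family, is where the machinery of \cite{Ho} and \cite{KS} does the essential work. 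Once that is secured, the lower semicontinuity is a formal consequence of Mosco convergence.
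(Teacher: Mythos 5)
Your overall architecture is legitimate and is in fact an abstract repackaging of what the paper does: the paper's proof \emph{is} a hands-on verification of the Mosco-type $\liminf$ inequality, carried out by expanding $|df_i|^2=\sum_j a_i^j\phi_i^j$ in the Dirichlet eigenfunctions of $B_R(m_i)$ (which converge spectrally by Kuwae--Shioya \cite{KS} and Ding \cite{di2}), writing the energy as $\sum_j\lambda_i^j(a_i^j)^2$, and passing to the limit term by term. So the hypotheses you isolate (uniform $L^2$ bound, compact support to stay in the Dirichlet domain, hypothesis $(3)$ as the uniform energy bound) are exactly the right ones, and if the weak $L^2$-convergence of $g_i=|df_i|^2$ to $g_\infty=|df_\infty|^2$ in the Kuwae--Shioya sense were in hand, your conclusion would follow.

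The genuine gap is precisely at the step you yourself flag as the obstacle: your proposed verification of that weak convergence does not work as stated. Property $(3)$ of Section $2$ only lets you compute limits of integrals of the form $\int F(\langle df_i^1,dg_i^1\rangle,\ldots)\,d\mathrm{vol}$ over the whole ball; testing $g_i$ against integrands $\langle d\psi_i,d\chi_i\rangle$ therefore only gives convergence of pairings with the specific family $\{\langle d\psi,d\chi\rangle\}$, and you give no argument that the linear span of this family is dense in $L^2(B_R(m_\infty),\upsilon)$ --- this is not obvious (for $\psi=\chi=r_p$ you only get constants, and the functions $\langle dr_p,dr_q\rangle$ are not known to generate the Borel $\sigma$-algebra up to null sets). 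Weak convergence against an arbitrary strongly convergent sequence is what the Mosco $\liminf$ inequality requires, and it cannot be extracted from property $(3)$ alone by this device. The paper sidesteps the issue entirely: it pairs $|df_i|^2$ against the eigenfunctions $\phi_i^j$ themselves (which converge \emph{strongly} in $L^2$ by \cite[Lemma 5.8]{KS}), obtaining $a_i^j\to a_\infty^j$ from the strong $L^2$-convergence $|df_i|^2\to|df_\infty|^2$, and then --- crucially --- uses Ding's estimate $\sum_{j>N}(a_i^j)^2\le C(n,R,L)N^{-1/n}$, valid uniformly in $i$ thanks to hypotheses $(2)$ and $(3)$, to control the tails and upgrade coefficientwise convergence to convergence of the truncated energies. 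This uniform tail bound is the quantitative substitute for your unproven totality claim, and without it (or an equivalent density argument) your proof does not close.
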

\begin{proof}
\cite[Lemma $5.8$]{KS} by Kuwae-Shioya (or \cite[Lemma $5.17$]{di2} by Ding) yields that there exists an orthonornal basis $\{\phi_i^j\}_j$ on $L^2(B_R(m_i))$ consisting of eigenfunctions $\phi_i^j$ associate with the $j$-th eigenvalue $\lambda_i^j$ with respect to the Dirichlet problem on $B_R(m_i)$ such that $\lambda_i^j \to \lambda_{\infty}^j$ and that $\phi_i^j \to \phi_{\infty}^j$ with respect to the $L^2$-topology (see \cite[Definition $2.3$]{KS} by Kuwae-Shioya for the definition of $L^2$-topology with respect to the measured Gromov-Hausdorff topology, or \cite{Ho5}).
Put
$|df_i|^2=\sum_{j=0}^{\infty}a_i^j\phi_i^j$
in $L^{2}(B_R(m_i))$ for every $i\le \infty$.
Let $L \ge 1$ with
\begin{equation}
\frac{1}{\mathrm{vol}\,B_R(m_i)}\int_{B_R(m_i)}|d|df_i|^2|^2d\mathrm{vol} = \sum_{j=0}^{\infty}\lambda_i^j(a_i^j)^2 \le L 
\end{equation}
for every $i<\infty$.
\cite[Lemma $5.11$]{di2} by Ding yields  
\begin{equation}
\sum_{j=N+1}^{\infty}(a_i^j)^2 \le \frac{1}{(\lambda_i^{N+1})^{1/2}}||f_i||_{L^2(B_R(m_i))}||d|df_i|^2||_{L^2(B_R(m_i))}\le \frac{C(n, R, L)}{N^{1/n}}
\end{equation}
for every $i < \infty$ and every $N$.
Fix $\epsilon>0$.
Then there exists $N_0$ such that 
$\sum_{j=N_0+1}^{\infty}(a_i^j)^2<\epsilon$
for every $i < \infty$.
Since $|df_i|^2 \to |df_{\infty}|^2$ on $B_R(m_{\infty})$ with respect to the $L^2$-topology,
we have 
\begin{equation}
a_i^j = \frac{1}{\mathrm{vol}\,B_R(m_i)}\int_{B_R(m_i)}|df_i|^2\phi_i^j d\mathrm{vol} 
\stackrel{i \to \infty}{\to} \frac{1}{\upsilon(B_R(m_{\infty}))}\int_{B_R(m_{\infty})}|df_{\infty}|^2\phi_{\infty}^j d\upsilon=a_{\infty}^j.
\end{equation}
Thus we have 
$|||df_{\infty}|^2-\sum_{j=0}^Na_{\infty}^j\phi_{\infty}^j||_{L^2(B_R(m_{\infty}))}=\lim_{i \to \infty }
|||df_{i}|^2-\sum_{j=0}^Na_{i}^j\phi_{i}^j||_{L^2(B_R(m_{i}))}\le \epsilon,$ for every $N \ge N_0$, 
i.e., $\sum_{j=0}^Na_{\infty}^j\phi_{\infty}^j \to |df_{\infty}|^2$
in $L^2(B_R(m_{\infty}))$ as $N \to \infty$.
Since
$||d(\sum_{j=0}^Na_i^j\phi_i^j)||_{L^2(B_R(m_i))} \to ||d(\sum_{j=0}^Na_{\infty}^j\phi_{\infty}^j)||_{L^2(B_R(m_{\infty}))}$
as $i \to \infty$ 
for every $N$,
this completes the proof.
\end{proof}
\begin{corollary}\label{sob}
Let $R>0$, $L\ge 1$ and let $\{f_i\}_i$ be a sequence of Lipschitz functions $f_i$ on $B_R(m_i)$.
Assume that the following properties hold:
\begin{enumerate}
\item $f_i$ is a $C^2$-function for every $i<\infty$.
\item \[\sup_{i<\infty} \left(||f_i||_{L^{\infty}} + \mathbf{Lip}f_i + \frac{1}{\mathrm{vol}\,B_R(m_i)}\int_{B_R(m_i)}(\Delta f_i)^2d\mathrm{vol}\right) \le L.\]
\item $f_i \to f_{\infty}$ on $B_R(m_{\infty})$.
\end{enumerate}
Then we have $|df_{\infty}|^2 \in H_{1,2}(B_r(m_{\infty}))$ for every $r<R$, and 
\begin{equation}
\frac{1}{\upsilon(B_r(m_{\infty}))}\int_{B_r(m_{\infty})}|d|df_{\infty}|^2|^2 d\upsilon \le C(n, L, r, R).
\end{equation}
In particular, we see that $|df_{\infty}|^2$ is weakly Lipschitz on $B_R(m_{\infty})$.
\end{corollary}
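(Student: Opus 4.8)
The plan is to deduce the corollary from Proposition \ref{ppp}, the only missing ingredients being the convergence of the differentials and a uniform $L^2$-bound on $|d|df_i|^2|$. Since Proposition \ref{ppp} requires the functions to be compactly supported in a ball strictly smaller than $B_R$, the first thing I would do is localize. Fix $r<r'<R$ and choose smooth Cheeger--Colding cutoff functions $\phi_i$ on $M_i$ with $\phi_i\equiv 1$ on $B_r(m_i)$, $\mathrm{supp}\,\phi_i\subset B_{r'}(m_i)$, $0\le\phi_i\le 1$ and $\mathbf{Lip}\,\phi_i+\|\Delta\phi_i\|_{L^\infty}\le C(n,r,r')$, which exist because $\mathrm{Ric}_{M_i}\ge -(n-1)$. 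Set $\tilde f_i=\phi_i f_i$; after passing to a subsequence we may assume $\phi_i\to\phi_\infty$ by Arzel\`a--Ascoli, so that $\tilde f_i\to\tilde f_\infty:=\phi_\infty f_\infty$ on $B_R(m_\infty)$, with $\phi_\infty\equiv 1$ and hence $\tilde f_\infty=f_\infty$ on $B_r(m_\infty)$.

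Next I would verify the hypotheses of Proposition \ref{ppp} for $\{\tilde f_i\}$. Each $\tilde f_i$ is a $C^2$ function with compact support in $B_{r'}(m_i)$ and $\sup_i\mathbf{Lip}\,\tilde f_i<\infty$. Expanding $\Delta\tilde f_i=\phi_i\Delta f_i-2\langle d\phi_i,df_i\rangle+f_i\Delta\phi_i$ and using assumption $(2)$ of the corollary together with the cutoff bounds gives $\int_{B_R(m_i)}(\Delta\tilde f_i)^2\,d\mathrm{vol}\le C(n,L,r,r')\,\mathrm{vol}\,B_R(m_i)$, so by the convergence property $(2)$ recalled at the end of Section $2$ one gets $d\tilde f_i\to d\tilde f_\infty$ on $B_R(m_\infty)$, which is condition $(1)$ of Proposition \ref{ppp}. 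For condition $(3)$ I would invoke the integrated Bochner formula: since $\tilde f_i$ has compact support,
\[
\int_{M_i}|\mathrm{Hess}_{\tilde f_i}|^2\,d\mathrm{vol}=\int_{M_i}(\Delta\tilde f_i)^2\,d\mathrm{vol}-\int_{M_i}\mathrm{Ric}(\nabla\tilde f_i,\nabla\tilde f_i)\,d\mathrm{vol}\le C(n,L,r,r')\,\mathrm{vol}\,B_R(m_i),
\]
where the inequality uses $\mathrm{Ric}_{M_i}\ge -(n-1)$ and $|\nabla\tilde f_i|\le\mathbf{Lip}\,\tilde f_i$. Combined with the pointwise bound $|d|d\tilde f_i|^2|=2|\mathrm{Hess}_{\tilde f_i}(\nabla\tilde f_i,\cdot)|\le 2\,\mathbf{Lip}(\tilde f_i)\,|\mathrm{Hess}_{\tilde f_i}|$, this yields $\sup_i\frac{1}{\mathrm{vol}\,B_R(m_i)}\int_{B_R(m_i)}|d|d\tilde f_i|^2|^2\,d\mathrm{vol}<\infty$, so $|d\tilde f_i|^2\in H_{1,2}(B_R(m_i))$ with a uniform bound.

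Proposition \ref{ppp} then gives $|d\tilde f_\infty|^2\in H_{1,2}(B_R(m_\infty))$ together with the semicontinuity estimate bounding $\frac{1}{\upsilon(B_R(m_\infty))}\int_{B_R(m_\infty)}|d|d\tilde f_\infty|^2|^2\,d\upsilon$ by $C(n,L,r,r')$. Since $\tilde f_\infty=f_\infty$ on $B_r(m_\infty)$ we have $|d\tilde f_\infty|^2=|df_\infty|^2$ there, whence $|df_\infty|^2\in H_{1,2}(B_r(m_\infty))$; choosing $r'=(r+R)/2$ and controlling $\upsilon(B_R)/\upsilon(B_r)$ by the Bishop--Gromov volume comparison inherited by the limit measure produces the asserted bound with a constant $C(n,L,r,R)$, for every $r<R$. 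For the weak Lipschitz assertion I would pass to the rectifiable charts of Theorem \ref{harm3} (or \ref{dist3}): on each $C_i^l$ the function $|df_\infty|^2\circ(\phi_i^l)^{-1}$ is a Sobolev function on a subset of $\mathbf{R}^l$ with $L^2$-differential, hence approximately differentiable almost everywhere, and the standard maximal-function argument (it is Lipschitz on each superlevel set $\{M(|\nabla u|)\le k\}$) writes the chart domain, up to a null set, as a countable union of Borel pieces on which it is Lipschitz; transporting these back through the bi-Lipschitz charts shows $|df_\infty|^2$ is weakly Lipschitz on $B_R(m_\infty)$.

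I expect the main obstacle to be the Bochner step and its interaction with the localization: one must arrange the cutoff so that $\{\tilde f_i\}$ simultaneously has compact support, retains the uniform $L^2$-Laplacian bound needed both for the convergence $d\tilde f_i\to d\tilde f_\infty$ and for the integrated Bochner estimate, and agrees with $f_i$ on $B_r$ so that nothing is lost when identifying $|d\tilde f_\infty|^2$ with $|df_\infty|^2$. The geometric hypothesis $\mathrm{Ric}_{M_i}\ge -(n-1)$ enters precisely through this Bochner identity and through the existence of cutoff functions with controlled Laplacian.
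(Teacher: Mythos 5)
Your proposal follows essentially the same route as the paper: localize with Cheeger--Colding cutoff functions supported in $B_{(r+R)/2}(m_i)$, feed $\phi_i f_i$ into Proposition \ref{ppp}, and obtain the uniform bound on $\int|d|d(\phi_if_i)|^2|^2$ from the integrated Bochner formula together with $\mathrm{Ric}_{M_i}\ge -(n-1)$ (this is exactly the content of the cited \cite[Remark $4.2$]{Ho}). The only divergence is the final weak-Lipschitz step, where the paper simply invokes the Poincar\'e inequality on $M_{\infty}$ and \cite[Theorem $4.14$]{ch1} to conclude that $H_{1,2}$-functions are weakly Lipschitz; your chart-based maximal-function argument is the same mechanism but is cleaner run directly on the metric measure space, since the restriction of $|df_{\infty}|^2$ to a bi-Lipschitz image of $C_i^l$ in $\mathbf{R}^l$ is not literally a classical Sobolev function on an open set.
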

\begin{proof}
The existence of a good cutoff function \cite[Theorem $6.33$]{ch-co} by Cheeger-Colding yields that there exists a sequence $\{\phi_i\}_{i<\infty}$ of smooth functions $\phi_i$ on $B_R(m_i)$ such that $||\nabla \phi_i||_{L^{\infty}} \le C(n, r, R)$, $||\Delta \phi_i||_{L^{\infty}} \le C(n, r, R)$, $0 \le \phi \le 1$, $\phi_i|_{B_r(m_i)}\equiv 1$ and $\mathrm{supp} (\phi_i) \subset B_{(r+R)/2}(m_i)$.
By applying Proposition \ref{ppp} for $\phi_if_i$, the property of $(2)$ in page $8$ and \cite[Remark $4.2$]{Ho}, it follows that
\begin{equation}
\frac{1}{\upsilon(B_r(m_{\infty}))}\int_{B_r(m_{\infty})}|d|df_{\infty}|^2|^2 d\upsilon \le C(n, L, r, R).
\end{equation}
On the other hand, Cheeger-Colding proved in \cite[Theorem $2.15$]{ch-co3} that the Poincar\'e inequality of type $(1,2)$ on $M_{\infty}$ holds.
Thus \cite[Theorem $4.14$]{ch1} by Cheeger yields that any Sobolev function is weakly Lipschitz.
Therefore we have the corollary.
\end{proof}
The following is a direct consequence of Corollary \ref{sob}:
\begin{theorem}[Weak twice differentiability of Ricci limit spaces]\label{harm}
Let $\{(C_i^l, \phi_i^l)\}_{l, i}$ be a rectifiable coordinate system of $(M_{\infty}, \upsilon)$.
Assume that for every $i, l$, there exist 
 $r>0$,  a sequence $\{x_j\}_j$ of points $x_j \in M_{j}$ with $C_i^l \subset B_r(x_{\infty})$ and $x_j \to x_{\infty}$,
 a sequence $\{f_{j, s}\}_{j<\infty, 1 \le s \le l}$ of $C^2$-functions $f_{j, s}$ on $B_{r}(x_j)$ such that
$\sup_{j, s}\mathbf{Lip}f_{j,s}<\infty$, 
$f_{j,s} \to \phi_{i, s}^{l}$ on $C_i^l$ as $j \to \infty$ for every $s$ and that
\begin{equation}
\sup_{j, s}\frac{1}{\mathrm{vol}\,B_r(x_j)}\int_{B_r(x_j)}(\Delta f_{j,s})^2d\mathrm{vol}<\infty,
\end{equation}
where $\phi_i^l=(\phi_{i,1}^{l}, \ldots \phi_{i, l}^{l})$.
Then the Riemannian metric $g$ of $M_{\infty}$ is weakly Lipschitz with respect to $\{(C_i^l, \phi_i^l)\}_{l, i}$.
In particular,  $M_{\infty}$ has a weakly second order differential structure with respect to $\{(C_i^l, \phi_i^l)\}_{l, i}$.
\end{theorem}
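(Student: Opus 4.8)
The plan is to reduce the statement directly to Corollary \ref{sob} by means of a polarization identity, and then to read off the final clause from Proposition \ref{we2nd}. Recall that the Riemannian metric $g=\{g_i^l\}_{l,i}$ of $M_\infty$ is defined componentwise by $(g_i^l)^{st}=\langle d\phi_{i,s}^l, d\phi_{i,t}^l\rangle$, regarded as a function on $\phi_i^l(C_i^l)$ through the bi-Lipschitz chart $\phi_i^l$. Since a weakly Lipschitz function is one admitting a countable Borel decomposition on each piece of which it is Lipschitz, and since $\phi_i^l$ and $(\phi_i^l)^{-1}$ are bi-Lipschitz while $\upsilon$ is Ahlfors $l$-regular on $C_i^l$ (so $\upsilon$-null and $H^l$-null sets correspond under the chart), weak Lipschitz continuity transfers between $C_i^l$ and its Euclidean image. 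Hence it suffices to prove that each function $\langle d\phi_{i,s}^l, d\phi_{i,t}^l\rangle$ is weakly Lipschitz on $C_i^l\subset B_r(x_\infty)$.

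First I would treat the diagonal terms. Fix $i,l,s$ together with the data $r$, $x_j\to x_\infty$, $f_{j,s}$ supplied by the hypothesis, and fix a point $y_0\in C_i^l$ with an approximating sequence $y_{0,j}\to y_0$. Replacing $f_{j,s}$ by $f_{j,s}-f_{j,s}(y_{0,j})$ alters neither $df_{j,s}$, nor $\Delta f_{j,s}$, nor $\mathbf{Lip}\,f_{j,s}$, and changes the limit $\phi_{i,s}^l$ only by an additive constant, so the differential of the limit remains $d\phi_{i,s}^l$. Because $y_0\in B_r(x_\infty)$ is an interior point, $y_{0,j}\in B_r(x_j)$ for large $j$, and the uniform Lipschitz bound then gives $\sup_j\|f_{j,s}\|_{L^\infty(B_r(x_j))}<\infty$. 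The normalized sequence therefore satisfies every hypothesis of Corollary \ref{sob} for the convergence based at $x_\infty$, and the corollary yields that $|d\phi_{i,s}^l|^2$ is weakly Lipschitz on $B_r(x_\infty)$, hence on $C_i^l$.

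Next I would obtain the off-diagonal terms by polarization. For $s\ne t$, apply the same argument to $\{f_{j,s}+f_{j,t}\}_j$: this sequence is again $C^2$, has uniformly bounded Lipschitz constant, converges to $\phi_{i,s}^l+\phi_{i,t}^l$ on $C_i^l$, and satisfies the Laplacian bound since $\Delta(f_{j,s}+f_{j,t})=\Delta f_{j,s}+\Delta f_{j,t}$ and $(a+b)^2\le 2a^2+2b^2$. Thus $|d(\phi_{i,s}^l+\phi_{i,t}^l)|^2$ is weakly Lipschitz on $C_i^l$, and the polarization identity
\begin{equation}
\langle d\phi_{i,s}^l, d\phi_{i,t}^l\rangle=\frac{1}{2}\left(|d(\phi_{i,s}^l+\phi_{i,t}^l)|^2-|d\phi_{i,s}^l|^2-|d\phi_{i,t}^l|^2\right)
\end{equation}
exhibits each off-diagonal component as a linear combination of weakly Lipschitz functions, hence weakly Lipschitz (a common decomposition making all three terms Lipschitz is obtained by intersecting the three given decompositions). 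This shows $g_i^l\in\mathrm{Riem}_1(\phi_i^l(C_i^l))$ for all $i,l$, i.e. $g$ is a weakly Lipschitz Riemannian metric with respect to $\{(C_i^l,\phi_i^l)\}_{l,i}$.

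Finally, the \textquotedblleft in particular\textquotedblright{} clause is immediate from Proposition \ref{we2nd} with $\alpha=1$: a weakly Lipschitz (weakly $1$-H\"older) Riemannian metric produces a weakly $C^{1,1}$-structure, which by Definition \ref{2nd2nd2nd} is exactly a weakly second order differential structure. The only substantial input is Corollary \ref{sob}, which itself rests on Proposition \ref{ppp} and the Kuwae–Shioya spectral convergence theory; the present argument is the elementary bookkeeping—polarization together with the verification that the summed and normalized sequences inherit all hypotheses—needed to pass from the single function $|df_\infty|^2$ to the full metric tensor. I expect the only point genuinely requiring care to be the normalization securing the $L^\infty$ bound and the interchange of weak Lipschitz continuity between $C_i^l$ and its Euclidean image, both handled by the bi-Lipschitz and Ahlfors-regularity properties noted above.
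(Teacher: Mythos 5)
Your proof is correct and is essentially the paper's own argument: the paper dismisses Theorem \ref{harm} as ``a direct consequence of Corollary \ref{sob},'' and your polarization of $\langle d\phi_{i,s}^l,d\phi_{i,t}^l\rangle$ via $|d(f_{j,s}+f_{j,t})|^2$, together with the normalization giving the $L^\infty$ bound and the appeal to Proposition \ref{we2nd} for the final clause, is exactly the bookkeeping being elided. The one point worth tightening is that the hypothesis only gives $f_{j,s}\to\phi_{i,s}^l$ \emph{on $C_i^l$}, whereas Corollary \ref{sob} requires a limit on all of $B_r(x_\infty)$; one should first use the uniform Lipschitz bound to extract a subsequence converging on the whole ball to a Lipschitz extension $F_s$ of $\phi_{i,s}^l$ (up to the additive constant), apply the corollary to $F_s$, and then use that $dF_s=d\phi_{i,s}^l$ $\upsilon$-a.e.\ on $C_i^l$.
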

We now are in a position to prove Theorem \ref{limit}:

\textit{A proof of Theorem \ref{limit}.}

It follows directly from Theorems \ref{harm3} and \ref{harm}. \,\,\,\,\, $\Box$
\begin{definition}
We say that a rectifiable coordinate system $\{(C_i^l, \phi_i^l)\}_{l, i}$ of $(M_{\infty}, \upsilon)$ as in Corollary \ref{harm} is a \textit{weakly second order differential structure associated with} 

$\{(M_j, m_j, \mathrm{vol}/\mathrm{vol}\,B_1(m_j))\}_j$.
\end{definition}
Assume that $\{(C_i^l, \phi_i^l)\}_{l, i}$ is a weakly second order differential structure associated with $\{(M_j, m_j, \mathrm{vol}/\mathrm{vol}\,B_1(m_j))\}_j$.
\begin{proposition}\label{2nd2nd}
Let $R>0$ and let $f_{\infty}$ be a Lipschitz function on $B_R(m_{\infty})$.
Assume that there exists a sequence $\{f_j\}_{j<\infty}$ of $C^2$-functions $f_j$ on $B_R(m_j)$ such that $\sup_j\mathbf{Lip}f_j<\infty$, $f_j \to f_{\infty}$ on $B_{R}(m_{\infty})$ and 
\begin{equation}
\sup_{j<\infty}\frac{1}{\mathrm{vol}\,B_R(m_j)}\int_{B_R(m_j)}(\Delta f_{j})^2d\mathrm{vol}<\infty.
\end{equation}
Then $f_{\infty}$ is weakly twice differentiable on $B_R(m_{\infty})$ with respect to $\{(C_i^l, \phi_i^l)\}_{l, i}$.
\end{proposition}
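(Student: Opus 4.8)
The plan is to reduce the statement to the scalar conclusion of Corollary \ref{sob} by polarization, and then to invert the weakly Lipschitz Riemannian metric supplied by Theorem \ref{harm}. Fix indices $i,l$ and set $F=f_\infty\circ(\phi_i^l)^{-1}$ on $A=\phi_i^l(C_i^l\cap B_R(m_\infty))$; by Definition \ref{2ndd} it suffices to show that $F$ is weakly twice differentiable on $A$ for every $i,l$. As $f_\infty$ is Lipschitz, $F$ is Lipschitz and hence weakly Lipschitz, so the only point is that $dF\in\Gamma_1(T^*A)$, i.e. that each coordinate derivative $\partial F/\partial x_t$ is weakly Lipschitz on $A$.

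First I would show that for each $s$ the cross term $\langle df_\infty,d\phi_{i,s}^l\rangle$ is weakly Lipschitz. Because $\{(C_i^l,\phi_i^l)\}_{l,i}$ is associated with $\{(M_j,m_j,\mathrm{vol}/\mathrm{vol}\,B_1(m_j))\}_j$, there are $C^2$-functions $f_{j,s}$ on balls $B_r(x_j)$ with $\sup_{j,s}\mathbf{Lip}f_{j,s}<\infty$, with $L^2$-norms of $\Delta f_{j,s}$ bounded uniformly in $j,s$, and with $f_{j,s}\to\phi_{i,s}^l$ on $C_i^l$. Using the uniform Lipschitz bound and a standard Arzel\`a--Ascoli argument, after passing to a subsequence $f_{j,s}$ converges on all of $B_r(x_\infty)$ to a Lipschitz function $\psi_s$ agreeing with $\phi_{i,s}^l$ on $C_i^l$. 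On any ball $B\subset B_r(x_\infty)\cap B_R(m_\infty)$ the sums $f_j+f_{j,s}$ are $C^2$, have uniformly bounded Lipschitz constants (hence uniformly bounded $L^\infty$-norms on the bounded set $B$), have uniformly $L^2$-bounded Laplacians by linearity of $\Delta$, and converge to $f_\infty+\psi_s$. Applying Corollary \ref{sob} to $f_\infty$, to $\psi_s$, and to $f_\infty+\psi_s$ therefore shows that $|df_\infty|^2$, $|d\psi_s|^2$ and $|d(f_\infty+\psi_s)|^2$ are weakly Lipschitz on $B$, so by
\begin{equation}
\langle df_\infty,d\psi_s\rangle=\frac{1}{2}\left(|d(f_\infty+\psi_s)|^2-|df_\infty|^2-|d\psi_s|^2\right)
\end{equation}
the cross term $\langle df_\infty,d\psi_s\rangle$ is weakly Lipschitz on $B$. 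Since $\psi_s=\phi_{i,s}^l$ on $C_i^l$ and the differential is local, $d\psi_s=d\phi_{i,s}^l$ a.e. on $C_i^l$, so $\langle df_\infty,d\phi_{i,s}^l\rangle$ is weakly Lipschitz on $C_i^l\cap B$; covering $C_i^l$ by such balls gives the assertion on $C_i^l\cap B_R(m_\infty)$.

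Next I would pass from cross terms to coordinate derivatives. Writing $g_{ts}=\langle d\phi_{i,t}^l,d\phi_{i,s}^l\rangle$ for the Riemannian metric of $M_\infty$ in this chart, one has $\langle df_\infty,d\phi_{i,s}^l\rangle=\sum_t(\partial F/\partial x_t)g_{ts}$, hence $\partial F/\partial x_t=\sum_s g^{ts}\langle df_\infty,d\phi_{i,s}^l\rangle$, where $(g^{ts})$ is the inverse matrix. By Theorem \ref{harm} each $g_{ts}$ is weakly Lipschitz, and since $g$ is pointwise positive definite its determinant is positive; subdividing $C_i^l$ into countably many Borel pieces on which $g$ is genuinely Lipschitz and $\det g$ is bounded below, each $g^{ts}$ is weakly Lipschitz as well. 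A finite sum of products of weakly Lipschitz functions being weakly Lipschitz, every $\partial F/\partial x_t$ is weakly Lipschitz, so $dF\in\Gamma_1(T^*A)$ and $F$ is weakly twice differentiable on $A$, as required.

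The main difficulty is not analytic but lies in matching domains: Corollary \ref{sob} is phrased for a single ball with an approximating sequence on $B_R(m_j)$, whereas the approximations $f_{j,s}$ of $\phi_{i,s}^l$ live on the possibly different balls $B_r(x_j)$ and are only known to converge on $C_i^l$. I handle this by extracting the full-ball limit $\psi_s$, applying Corollary \ref{sob} to $\psi_s$ rather than directly to $\phi_{i,s}^l$, and then transferring back to $C_i^l$ via locality of the differential; preservation of weak Lipschitz continuity under countable Borel decomposition is what lets the local conclusions patch together over the whole chart. The only routine check left is that $f_j+f_{j,s}\to f_\infty+\psi_s$ in the sense of Definition \ref{conv lip}, which is immediate from the two given convergences.
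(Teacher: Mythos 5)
Your proposal is correct and follows the paper's own route: the paper's proof is the single line ``follows from Corollary \ref{sob},'' and your polarization of $|d(f_\infty+\psi_s)|^2$ together with inversion of the weakly Lipschitz metric from Theorem \ref{harm} is exactly the intended way to pass from that corollary to weak Lipschitz continuity of the coordinate partials $\partial(f_\infty\circ(\phi_i^l)^{-1})/\partial x_t$. The domain-matching and Arzel\`a--Ascoli points you flag are real but routine, and you handle them correctly.
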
 
\begin{proof}
The proposition follows from Corolalry \ref{sob}.
\end{proof}
Finally, we end this section by giving the following corollary:
\begin{corollary}[Weak twice differentiability of eigenfunctions]\label{eigen}
Let $f_{\infty}$ be an eigenfunction associated with the eigenvalue $\lambda_{\infty}$ with respect to the Dirichlet problem on $B_R(m_{\infty})$.
Then $f_{\infty}$ is weakly twice differentiable on $B_R(m_{\infty})$ with respect to $\{(C_i^l, \phi_i^l)\}_{l, i}$. 
\end{corollary}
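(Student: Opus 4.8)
The plan is to deduce the corollary directly from Proposition \ref{2nd2nd}: it suffices to exhibit, for the given $f_\infty$, an approximating sequence of $C^2$-functions $f_j$ on $B_R(m_j)$ with $\sup_j \mathbf{Lip}\,f_j<\infty$, with $f_j \to f_\infty$ on $B_R(m_\infty)$ in the sense of Definition \ref{conv lip}, and with $\sup_{j}(1/\mathrm{vol}\,B_R(m_j))\int_{B_R(m_j)}(\Delta f_j)^2 d\mathrm{vol}<\infty$. The natural candidates are eigenfunctions on the $M_j$ supplied by the spectral convergence theory.

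First I would invoke the $L^2$-convergence of the Dirichlet spectral structure of Kuwae-Shioya already used in the proof of Proposition \ref{ppp}: there are orthonormal bases $\{u_j^k\}_k$ of $L^2(B_R(m_j))$ consisting of Dirichlet eigenfunctions with eigenvalues $\lambda_j^k$, and an orthonormal basis $\{u_\infty^k\}_k$ on $B_R(m_\infty)$, such that $\lambda_j^k \to \lambda_\infty^k$ and $u_j^k \to u_\infty^k$ in the $L^2$-sense with respect to the measured Gromov-Hausdorff topology. Since the Dirichlet spectrum on the bounded domain $B_R(m_\infty)$ is discrete with finite multiplicities (compact embedding $H_{1,2}\hookrightarrow L^2$ from the doubling and Poincar\'e inequalities on $M_\infty$), the given $f_\infty$ lies in the finite-dimensional $\lambda_\infty$-eigenspace, so $f_\infty=\sum_{k \in I}c_k u_\infty^k$ for a finite set $I$ with $\lambda_\infty^k=\lambda_\infty$. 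Define $f_j=\sum_{k \in I}c_k u_j^k$. Each $u_j^k$ is smooth by elliptic regularity, so $f_j \in C^2(B_R(m_j))$, and $f_j \to f_\infty$ in $L^2$. The Laplacian bound is then immediate, since $\Delta f_j=\sum_{k \in I}c_k \lambda_j^k u_j^k$ gives $(1/\mathrm{vol}\,B_R(m_j))\int_{B_R(m_j)}(\Delta f_j)^2 d\mathrm{vol}=\sum_{k \in I}c_k^2(\lambda_j^k)^2 \to \sum_{k \in I}c_k^2\lambda_\infty^2$, which is uniformly bounded.

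The essential point, and the main obstacle, is the uniform bound $\sup_j \mathbf{Lip}\,f_j<\infty$. Here I would apply an eigenfunction gradient estimate valid under $\mathrm{Ric}_{M_j}\ge -(n-1)$ (a Cheng-Yau type interior estimate), together with a uniform $L^\infty$-bound on the $u_j^k$ obtained by Moser iteration from the uniform Sobolev and Poincar\'e inequalities available under the lower Ricci bound; this yields an interior gradient bound depending only on $n$, $R$, $\lambda_\infty$, and $\sup_j\|f_j\|_{L^\infty}$, and simultaneously shows $\sup_j\|f_j\|_{L^\infty}<\infty$ and that $f_\infty$ itself is Lipschitz on $B_R(m_\infty)$. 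With this in hand, I would finally upgrade the $L^2$-convergence to the pointwise convergence required by Proposition \ref{2nd2nd}: the uniform Lipschitz bound gives equicontinuity, so by the Gromov-Hausdorff version of the Arzel\`a-Ascoli theorem a subsequence of $\{f_j\}$ converges uniformly to a continuous limit, which the $L^2$-convergence identifies with $f_\infty$; as the limit is subsequence-independent, $f_j \to f_\infty$ on $B_R(m_\infty)$. All hypotheses of Proposition \ref{2nd2nd} then hold, and the weak twice differentiability of $f_\infty$ follows at once; everything apart from the uniform gradient and sup-norm estimates is bookkeeping on top of the spectral convergence theory and Proposition \ref{2nd2nd}.
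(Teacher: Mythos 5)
Your proposal is correct and follows essentially the same route as the paper: approximate $f_\infty$ by Dirichlet eigenfunctions on the $M_j$ via the Kuwae--Shioya/Ding spectral convergence, obtain the uniform Lipschitz bound from the Cheng--Yau gradient estimate, and conclude by Proposition \ref{2nd2nd}. The only differences are that you spell out the finite-multiplicity eigenspace decomposition and the Arzel\`a--Ascoli upgrade from $L^2$ to pointwise convergence, details the paper leaves implicit.
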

\begin{proof}
\cite[Lemma $5.8$]{KS} by Kuwae-Shioya (or \cite[Lemma $5.17$]{di2} by Ding) yields that there exists a sequence $\{f_i\}_i$ of eigenfunctions $f_i$ associated with the eigenvalue $\lambda_i$ with respect to the Dirichlet problem on $B_R(x_i)$ such that $\lambda_i \to \lambda_{\infty}$ and that $f_i \to f_{\infty}$ with respect to the $L^2$-topology.
Note that it follows from Cheng-Yau's gradient estimate \cite{ch-yau} that $\sup_{i}\mathbf{Lip}(f_i|_{B_r(x_i)})<\infty$ for every $r<R$.
Thus the corollary follows directly from Proposition \ref{2nd2nd}.
\end{proof}
\begin{remark}
See \cite[Theorem $1.3$]{Ho5} for a generalization of Corollary \ref{sob} and Proposition \ref{2nd2nd}.
Moreover, in \cite{Ho5}, we will prove that for $f_{\infty}$ as in Corollary \ref{eigen}, if $M_{\infty}$ is noncollapsing, then $\Delta^{g_{M_{\infty}}}f_{\infty}=\lambda_{\infty}f_{\infty}$.
In particular, in noncollapsing setting, the Laplacian defined as in Proposition \ref{hess2} coincides with the Dirichlet Laplacian defined by Cheeger-Colding in \cite{ch-co3} on a dense subspace in $L^2$.
See \cite[Theorem $1.4$]{Ho5} for the detail.
\end{remark}

\end{document}